\numberwithin{equation}{section}
\theoremstyle{plain}
\newtheorem{theorem}{Theorem}[section]
\newtheorem{lemma}[theorem]{Lemma}
\newtheorem{proposition}[theorem]{Proposition}
\newtheorem{corollary}[theorem]{Corollary}
\newtheorem{thm}[equation]{Theorem}
\newtheorem{cor}[equation]{Corollary}
\newtheorem*{claim*}{Claim}
\newtheorem{ques}[theorem]{Question}
\theoremstyle{definition}
\newtheorem{remark}[theorem]{Remark}
\newtheorem{definition}[theorem]{Definition}
\DeclareMathOperator{\Isom}{{\mathrm Isom}}
\DeclareMathOperator{\Hull}{{\mathrm Hull}}
\DeclareMathOperator{\stab}{{\mathrm stab}}
\DeclareMathOperator{\hd}{{\mathrm hd}}
\def\La{\Lambda}
\def\la{\lambda}
\def\Ga{\Gamma}
\def\ga{\gamma}
\def\H{\mathbb H}
\def\R{\mathbb R}
\def\Z{\mathbb Z}
\def\N{\mathbb N}
\newcommand{\calF}{{\mathcal F}}
\newcommand{\Core}{\textup{Core}}
\newcommand{\thin}{\textup{thin}} 
\newcommand{\noncusp}{\textup{noncusp}}
\newcommand{\card}{{\mathrm{card}}\, }
\def\geo{\partial_{\infty}}
\newcommand{\Bis}{\textup{Bis}}
\title{Geometric finiteness in negatively pinched Hadamard manifolds}
\author{Michael Kapovich}
\address{M.K.: Department of Mathematics, UC Davis, One Shields Avenue, Davis CA 95616, USA}
\address{KIAS, 85 Hoegiro, Dongdaemun-gu, 
Seoul 130-722, South Korea}
\email{kapovich@math.ucdavis.edu}
\author{Beibei Liu}
\address{B.L.: Department of Mathematics, UC Davis, One Shields Avenue, Davis CA 95616, USA}
\email{bxliu@math.ucdavis.edu}
\subjclass[2010]{20F65, 22E40, 53C20, 57N16.}
\providecommand{\keywords}[1]{\textbf{\textit{Index terms---}} #1}
\date{December 9, 2018}
\begin{document}

\begin{abstract}
In this paper, we generalize Bonahon's characterization of geometrically infinite torsion-free discrete subgroups of $\textup{PSL}(2, \mathbb{C})$ to geometrically infinite discrete  subgroups $\Gamma$ of isometries of negatively pinched Hadamard manifolds $X$. We then  generalize a theorem of Bishop to prove that every discrete geometrically infinite isometry subgroup $\Gamma$ has a set of nonconical limit points with the cardinality of the continuum.  
\end{abstract}

\keywords{manifolds of negative curvature, discrete groups, geometric finiteness.}

\maketitle

\section{Introduction}
\label{sec:introduction}

The notion of geometrically finite  discrete groups  was originally introduced by Ahlfors in \cite{Ah}, for subgroups of isometries of 
the $3$-dimensional hyperbolic space $\H^{3}$ as the finiteness condition for the number of faces of a convex fundamental polyhedron. In the same paper, Ahlfors proved that the limit set of a geometrically finite subgroup of isometries of $\H^3$ has either zero or full Lebesgue measure in $S^2$. 
The notion of geometric finiteness turned out to be quite fruitful in the study of Kleinian groups. Alternative definitions of geometric finiteness were later given by Marden \cite{Ma}, Beardon and Maskit \cite{BM}, and Thurston \cite{Th}. These definitions were further extended by Bowditch \cite{Bo1} and Ratcliffe \cite{Rat} for isometry subgroups of higher dimensional hyperbolic spaces and, a bit later, by Bowditch \cite{Bo2} to negatively pinched Hadamard manifolds. While the original Ahlfors' definition turned out to be too limited (when used beyond the hyperbolic 3-space), other definitions of geometric finiteness were proven to be equivalent by Bowditch in \cite{Bo2}. 

Our work is motivated by the definition of geometric finiteness due to Beardon and Maskit \cite{BM} who proved 

\begin{theorem}
A discrete isometry subgroup $\Gamma$ of $\H^3$ is geometrically finite if and only if every limit point of $\Gamma$  is either a {\em conical limit point} or  a {\em bounded parabolic fixed point}. 
\end{theorem}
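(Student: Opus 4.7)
The plan is to establish both implications using an equivalent reformulation of geometric finiteness: that the $\varepsilon$-thick part of the convex core $C(\Lambda(\Gamma))/\Gamma$ is compact, for a suitable Margulis constant $\varepsilon$. Combined with the Margulis lemma, which decomposes the thin part of $M = \H^3/\Gamma$ into disjoint Margulis tubes around short closed geodesics and standard cusp neighborhoods around parabolic fixed points, this reformulation translates the statement into a compactness question that can be analyzed through the dynamics of $\Gamma$ on the limit set $\Lambda(\Gamma) \subset S^2$.

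For the forward direction, assume $\Gamma$ is geometrically finite and fix a limit point $\xi \in \Lambda(\Gamma)$. Pick a basepoint $o$ in the convex hull and let $\sigma:[0,\infty) \to \H^3$ be the geodesic ray from $o$ to $\xi$. The projection $\bar\sigma$ to $M$ behaves in one of two ways: either it returns to the (compact) thick part of the convex core along an unbounded sequence of times, or it eventually enters a single cusp neighborhood and never leaves. In the first case, a standard diagonal argument in the universal cover produces $\gamma_n \in \Gamma$ with $d(\gamma_n o, \sigma(t_n))$ uniformly bounded, so $\xi$ is a conical limit point. In the second case, elementary horospherical geometry forces $\sigma$ to exit at the parabolic fixed point $p$ at the tip of the cusp, so $\xi = p$; cocompactness of $\Gamma_p$ on $\Lambda(\Gamma)\setminus\{p\}$ then follows from compactness of the image in $M$ of the boundary horosphere intersected with the convex hull.

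For the converse, assume every point of $\Lambda(\Gamma)$ is conical or bounded parabolic, and suppose for contradiction that the thick part of the convex core is noncompact. Then there exist $x_n$ in the thick part of $C(\Lambda(\Gamma))$ whose projections to $M$ escape every compact set; writing $x_n = \gamma_n y_n$ with $y_n$ in a fixed compact fundamental region and extracting a subsequence, $\gamma_n o \to \xi \in \Lambda(\Gamma)$. The hypothesis leaves two alternatives. If $\xi$ is conical, the sequence $\gamma_n o$ can be arranged to track a fixed geodesic ray, which would force $x_n$ to stay in a compact subset of $M$, contradicting the escape. If $\xi$ is a bounded parabolic fixed point with stabilizer $\Gamma_p$, cocompactness of $\Gamma_p$ on $\Lambda(\Gamma)\setminus\{p\}$ allows one to replace $\gamma_n$ by $\delta_n \gamma_n$ with $\delta_n \in \Gamma_p$ so that $\delta_n \gamma_n o$ remains within a uniformly bounded neighborhood of a $\Gamma_p$-invariant horoball $H_p$; with the Margulis constant chosen so that $H_p$ descends to an embedded cusp in $M$, this again contradicts $x_n$ lying in the thick part.

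The main obstacle is the bounded-parabolic case of the backward direction: one needs precise geometric bookkeeping to convert the abstract cocompactness of $\Gamma_p$ on $\Lambda(\Gamma)\setminus\{p\}$ into a concrete bound that propels the extracted $x_n$ into a cusp neighborhood. This requires a careful choice of Margulis constant so that the invariant horoball about $p$ embeds in $M$, together with an equivariant \emph{shadow}-style estimate relating orbit positions in $\H^3$ to positions relative to horospheres centered at $p$.
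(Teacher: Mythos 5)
The paper itself offers no proof of this statement: it is quoted as the Beardon--Maskit theorem with a citation to \cite{BM}, and within the paper it is subsumed by Theorem \ref{thm:gfcharacterization}, whose condition (2) is exactly the condition in question. So there is nothing in the paper to compare your argument against line by line; what you are really attempting is the Beardon--Maskit/Bowditch equivalence (2)$\Leftrightarrow$(3) of Theorem \ref{thm:gfcharacterization}. Your forward direction follows the standard route and is essentially sound as a sketch; the only omission is that the projected ray could also become trapped in a single Margulis tube rather than a cusp, but tubes are compact, so $\xi$ is still conical in that case.

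The converse direction has a genuine gap. First, the step ``writing $x_n=\gamma_n y_n$ with $y_n$ in a fixed compact fundamental region'' is circular: the existence of a compact region $K$ with $x_n\in\Gamma K$ is precisely the compactness you are trying to establish, and it is flatly incompatible with your own hypothesis that $\pi(x_n)$ escapes every compact subset of $M$. Second, and more seriously, even after repairing this by letting $x_n\to\xi\in\Lambda(\Gamma)$ directly, the case ``$\xi$ conical'' does not produce a contradiction. Conicality of $\xi$ constrains orbit points near the single ray $o\xi$; the points $x_n$ may converge to $\xi$ in $\bar{\H^3}$ while drifting arbitrarily far from that ray (and from $\Gamma o$), so their projections can perfectly well escape compacta. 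Indeed, for any geometrically infinite group one can translate points that are deep in an end so that they accumulate at a prescribed conical limit point, so the implication you invoke is false, not merely unjustified. The actual proof cannot proceed by examining the one accumulation point of the escaping sequence: it needs the global hypothesis on all of $\Lambda(\Gamma)$, implemented via a finiteness statement for the $\Gamma$-orbits of bounded parabolic points (which you never establish, and which is itself a nontrivial lemma), a choice of pairwise disjoint invariant cusp regions, and a uniform covering argument over $\Lambda(\Gamma)$ by shadows of a single compact set and of the cusp regions. That uniformization step is the real content of the converse, and it is missing from your outline.
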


This theorem was improved by Bishop in \cite{Bi}:

 \begin{theorem}
 \label{theo 1.2}
A discrete subgroup $\Gamma< \Isom(\H^3)$ is geometrically finite if and only if  every point of $\Lambda( \Gamma)$ is either a conical limit point or a parabolic fixed point. Furthermore, if $\Gamma< \Isom(\H^3)$ is geometrically infinite, $\Lambda( \Gamma)$ contains a set  of nonconical limit points with the cardinality of the continuum. 
 \end{theorem}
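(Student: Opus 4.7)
The plan is to treat the two assertions of the theorem separately.

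For the equivalence, the direction \emph{geometrically finite $\Rightarrow$ every limit point is conical or parabolic} is immediate from the Beardon--Maskit theorem quoted above, since a bounded parabolic fixed point is \emph{a fortiori} parabolic. For the converse, assuming every limit point is conical or parabolic, I would upgrade the conclusion to ``every parabolic fixed point is bounded'' and then invoke Beardon--Maskit. Given a parabolic fixed point $p$ with stabilizer $\Gamma_p$, suppose for contradiction that $\Gamma_p$ fails to act cocompactly on $\Lambda(\Gamma)\setminus\{p\}$. Then there is a sequence $\xi_n \in \Lambda(\Gamma) \setminus \Gamma\cdot p$ whose $\Gamma_p$-orbits accumulate on $p$. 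A compactness argument in a horoball based at $p$ extracts a subsequential limit $\xi_\infty$ whose representing geodesic ray is swallowed by the cusp without escaping radially, so never recurs to a fixed compact set in $\H^3/\Gamma$; thus $\xi_\infty$ is not conical. Also $\xi_\infty$ cannot be parabolic, because it is a limit of non-parabolic $\Gamma$-translates while the set of parabolic fixed points is countable. This contradicts the hypothesis.

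For the cardinality statement, suppose $\Gamma$ is geometrically infinite. Let $C_0 = \Hull(\Lambda(\Gamma))/\Gamma$ denote the convex core and let $C_0^{\noncusp}$ be its noncuspidal part, obtained by removing open $\Gamma$-equivariant horoball neighborhoods of the parabolic fixed points. By Bowditch's equivalent definitions of geometric finiteness, $C_0^{\noncusp}$ is noncompact. I would then build a topological Cantor set of nonconical limit points inside $\Lambda(\Gamma)$ by a branching construction. Fix a basepoint $o$ in the preimage $\widetilde{C_0^{\noncusp}} \subset \H^3$ and inductively construct a tree of geodesic segments $\sigma_w$ indexed by finite binary strings $w$, each child $\sigma_{w0}, \sigma_{w1}$ extending $\sigma_w$ further into $\widetilde{C_0^{\noncusp}}$, with the forward directions at the tip of $\sigma_w$ separated by an angle bounded below by some fixed $\theta_0 > 0$. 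To every infinite descending branch there corresponds a geodesic ray from $o$, whose endpoint $\xi\in\Lambda(\Gamma)$ has the property that the projection of the ray to $\H^3/\Gamma$ marches out through $C_0^{\noncusp}$, avoiding cusps and never returning to any fixed compact set. Hence $\xi$ is neither conical nor parabolic, and uniform angular separation forces distinct branches to give distinct endpoints, producing $2^{\aleph_0}$ nonconical limit points.

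The principal obstacle is securing the branching step uniformly: at the endpoint of each segment $\sigma_w$ one must locate two extensions into $\widetilde{C_0^{\noncusp}}$ whose directions differ by at least $\theta_0$. Bishop's original argument in $\H^3$ leverages the two-dimensional topology of $\partial_\infty \H^3 = S^2$ together with the pleated-surface structure on the boundary of the convex hull; noncompactness of $C_0^{\noncusp}$ then yields infinitely many independent pleating directions from which to branch, and a recurrence/ergodicity input guarantees that the two directions chosen at each node give endpoints genuinely far apart on $S^2$. The rest of the present paper will have to replace this planarity-based mechanism by a purely curvature-based one valid in any negatively pinched Hadamard manifold.
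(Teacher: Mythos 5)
Your proposal has two genuine gaps, one in each half, and both trace back to the same missing ingredient: Bonahon's theorem on escaping closed geodesics, which is the engine of Bishop's proof and of this paper's generalization. For the equivalence, your plan to upgrade ``every parabolic fixed point is parabolic'' to ``bounded parabolic'' directly breaks down at the step where you claim $\xi_\infty$ cannot be a parabolic fixed point ``because it is a limit of non-parabolic $\Gamma$-translates while the set of parabolic fixed points is countable.'' That is a non sequitur: a convergent sequence of non-parabolic limit points can perfectly well converge to a parabolic point, and countability of the target set is irrelevant to whether one particular limit lands in it. The paper (following Bishop) avoids this entirely by running the logic the other way: it first proves that geometric infiniteness forces a \emph{continuum} of nonconical limit points, and then observes that since $\Gamma$ is countable the parabolic fixed points form a countable set, so some nonconical limit point is not parabolic --- contradicting the hypothesis. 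The countability argument is used to finish the converse, not to rule out a parabolic limit of a specific sequence.

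For the cardinality statement, your branching construction is not an argument yet: the step you flag as ``the principal obstacle'' (producing, at the tip of each $\sigma_w$, two uniformly angularly separated continuations that both keep escaping) is exactly the content that needs proof, and noncompactness of $\noncusp_{\varepsilon}(\Core(M))$ alone does not supply it --- nor does it even guarantee that a single ray staying in the noncuspidal core projects to a \emph{proper} ray, which is what nonconicality requires. You also misremember Bishop's mechanism: it does not use pleated surfaces or the planarity of $S^2$; it uses Bonahon's theorem to extract a sequence of closed geodesics $\lambda_i$ escaping every compact set, connects consecutive ones by common perpendiculars $\mu_i$, and inserts arcs $\nu_i$ that wrap around $\lambda_i$ a prescribed number $t_i$ of times. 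The resulting concatenations are uniform quasigeodesics (long edges meeting at angles $\geq \pi/2$), hence fellow-travel genuine geodesic rays whose projections are proper (so the endpoints are nonconical), and distinct wrapping sequences $\tau=(t_i)$ give distinct endpoints, yielding the continuum. The branching freedom thus comes from the integer wrapping numbers, not from angular separation on the ideal boundary; this is precisely the device your sketch lacks and the one the paper transplants verbatim to pinched Hadamard manifolds.
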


The key ingredient in Bishop's proof of Theorem \ref{theo 1.2} is Bonahon's theorem\footnote{Bonahon uses this result to prove his famous theorem about tameness of hyperbolic 3-manifolds. } \cite{Bo}:

\begin{theorem} 
\label{Bona}
A discrete torsion-free   subgroup $\Gamma< \Isom(\H^3)$ is geometrically infinite if and only if  there exists a sequence of closed geodesics 
$\lambda_i$ in the manifold $M=\H^3/\Gamma$ which ``escapes every compact subset of $M$,'' i.e., for every compact subset $K\subset M$,
$$
\card (\{i: \lambda_i\cap K\ne \emptyset\})<\infty. 
$$
\end{theorem}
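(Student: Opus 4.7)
The plan is to prove the two implications separately.

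For the easy direction, I would assume $\Gamma$ is geometrically finite and show no escaping sequence of closed geodesics can exist. By Bowditch's characterization, geometric finiteness is equivalent to the non-cuspidal part $C_{\text{nc}}(M) := C(M) \setminus (\text{open cusp neighborhoods})$ of the convex core being compact. Every closed geodesic $\lambda \subset M$ is the quotient of the axis of a loxodromic element of $\Gamma$, so $\lambda$ lifts to a geodesic in the convex hull of $\Lambda(\Gamma)$ and is therefore contained in $C(M)$. Furthermore, $\lambda$ cannot lie entirely inside a Margulis cusp, since a geodesic in $\H^3$ cannot stay inside a horoball forever. Hence each closed geodesic meets the compact set $C_{\text{nc}}(M)$, contradicting the defining property of an escaping sequence.

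For the hard direction, assume $\Gamma$ is geometrically infinite, so $C_{\text{nc}}(M)$ is non-compact. I would follow Bonahon's original strategy. Pick a sequence $x_n \in C_{\text{nc}}(M)$ with $d(x_n, x_0) \to \infty$ for a fixed basepoint $x_0$, and split into two cases. If $\operatorname{inj}_M(x_n) \to 0$ along a subsequence, then $x_n$ lies in a Margulis tube rather than a cusp (since $x_n \in C_{\text{nc}}(M)$), and the core of that tube is a closed geodesic; passing to a subsequence, these cores form an escaping sequence. If instead $\operatorname{inj}_M(x_n)$ is bounded below by some $\varepsilon > 0$, invoke Bonahon's tameness theorem: $M$ is homeomorphic to the interior of a compact $3$-manifold with boundary, so its ends have well-defined topological structure. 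The sequence $x_n$ accumulates in some end $\mathcal{E}$ whose intersection with $C_{\text{nc}}(M)$ is unbounded, and Bonahon's analysis of simply degenerate ends yields a sequence of simple closed curves on a topological model of $\mathcal{E}$ whose geodesic representatives $\lambda_i \subset M$ lie in $\mathcal{E}$ and exit the end. Those $\lambda_i$ are the desired escaping sequence.

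The main obstacle is the second case, which rests on Bonahon's deep analysis of ends of tame hyperbolic $3$-manifolds via exiting pleated surfaces; the argument is very specific to dimension three and to the special interaction between hyperbolic geometry and the Gauss--Bonnet theorem for surfaces. Extending the theorem to negatively pinched Hadamard manifolds, as the present paper aims to do, will require replacing the pleated-surface machinery by a purely dynamical construction. A natural substitute is the Anosov closing lemma applied to recurrent orbits of the geodesic flow on $T^1 M$: one uses that $x_n$ lies in the projection of the non-wandering set of the geodesic flow to produce a recurrent unit tangent vector there, and then closes the returning orbit up to a genuine closed geodesic. The delicate point will be to control the length or the spatial extent of the resulting closed geodesic so that it stays near $x_n$ and therefore escapes every compact set, rather than merely passing through a neighborhood of $x_n$ before wandering back into a fixed compact region.
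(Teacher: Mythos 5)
Your easy direction is fine, and in fact more direct than the paper's route (the paper deduces ``escaping geodesics $\Rightarrow$ geometrically infinite'' by first producing a continuum of nonconical limit points and then invoking the Beardon--Maskit--Bowditch characterization, whereas you argue directly that every closed geodesic lies in $\mathrm{Core}(M)$, cannot be trapped in a cusp, and hence meets the compact noncuspidal core). Case 1 of your hard direction (injectivity radius tending to $0$) is also essentially correct, though you should note \emph{why} the core geodesics escape: a closed geodesic of length $\ell$ through $x$ forces $\mathrm{inj}(x)\le \ell/2$, so any sequence of closed geodesics with lengths tending to $0$ automatically escapes every compact set. This is exactly the opening observation of the paper's proof of $(1)\Rightarrow(2)$.

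The genuine gap is Case 2. Invoking Bonahon's tameness theorem and his analysis of simply degenerate ends via exiting pleated surfaces is both circular and too restrictive. It is circular because the escaping-geodesics statement is a \emph{lemma in} Bonahon's proof of tameness (see the paper's footnote to Theorem \ref{Bona}): Bonahon establishes it by an elementary group-theoretic and coarse-geometric argument \emph{before} any pleated-surface or ending-lamination machinery is available, precisely so that he can later use the escaping geodesics to build the exiting surfaces. It is too restrictive because Theorem \ref{Bona} is asserted for an arbitrary discrete torsion-free $\Gamma<\Isom(\mathbb{H}^3)$, with no finite generation hypothesis, whereas tameness and the simply-degenerate-end analysis require finite generation (and, in Bonahon's original paper, additional hypotheses such as condition (B)). Your proposed substitute via the Anosov closing lemma has exactly the defect you yourself flag, and it is fatal rather than merely delicate: density of closed orbits in the nonwandering set only produces closed geodesics passing \emph{near} $x_n$, and such a geodesic may perfectly well return to a fixed compact set $K$ containing the basepoint, so the resulting sequence need not escape $K$. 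What is actually needed, and what the paper supplies, is a closed geodesic \emph{entirely contained} far from the basepoint. The paper achieves this by taking geodesic loops at $x$ penetrating deep into the noncuspidal core, extracting a subarc $\alpha_i$ lying outside a large ball $C_n$, and --- according to whether the endpoints of $\alpha_i$ land on the cuspidal boundary or on the sphere $\boldsymbol{\delta}C_n$ --- manufacturing two parabolic (or elliptic) elements represented by loops avoiding $C_{n-1}$; Theorem \ref{proposition 3.16} (or Corollary \ref{cor:BF}) then yields a loxodromic word of \emph{uniformly bounded} length, so its representing loop consists of boundedly many geodesic segments outside $C_{n-1}$, and Corollary \ref{free homotopic} forces the corresponding closed geodesic to stay within a uniform distance $D$ of that loop, hence at distance at least $(n-1)R-D$ from $x$. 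None of this bounded-word-length-plus-free-homotopy control is present in your proposal, and without it the construction of the escaping sequence in Case 2 does not go through.
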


According to Bishop, Bonahon's theorem also holds for groups with torsion. We extend Bonahon's proof and prove that Bonahon's theorem  holds for discrete isometry subgroups of  negatively pinched Hadamard manifolds $X$.

\medskip 
Bowditch generalized the notion of geometric finiteness to discrete subgroups of isometries of negatively pinched Hadamard manifolds \cite{Bo2}. A negatively pinched Hadamard manifold is a complete, simply connected Riemannian manifold such that all sectional curvatures lie between two negative constants. From now on, we use $X$ to denote an $n$-dimensional  negatively pinched Hadamard manifold, $\geo X$ its visual (ideal) boundary, $\bar{X}$ the visual compactification $X\cup \geo X$, $\Gamma$ a discrete subgroup of isometries of $X$, $\Lambda=\Lambda(\Gamma)$ the limit set of $\Gamma$. The convex core $\Core(M)$ of $M=X/\Gamma$ is defined as the $\Gamma$-quotient of the closed convex hull of $\Lambda (\Gamma)$ in $X$. Recall also that  a point $\xi\in \geo X$ is a \emph{conical limit point}\footnote{Another way  is to describe conical limit points of $\Gamma$ as points $\xi\in \geo X$ such that one, equivalently, every, geodesic ray $ \R_+\to X$ asymptotic to $\xi$ projects to a non-proper map $\R_+\to M$.} of $\Gamma$ if for every $x\in X$ and every geodesic ray $l$ in $X$ asymptotic to $\xi$, there exists a positive constant $A$ such that the set $\Gamma x\cap N_{A}(l)$ accumulates to $\xi$, where    $N_{A}(l)$ denotes the $A$-neighborhood of $l$ in $X$. A  parabolic fixed point $\xi\in \geo X$ (i.e.  a fixed point of a parabolic element of $\Gamma$) is called {\em bounded} if 
$$
(\Lambda(\Gamma)- \{ \xi \})/ {\Gamma}_\xi$$
 is compact. Here ${\Gamma}_\xi$ is the stabilizer of $\xi$ in $\Gamma$.

 Bowditch  \cite{Bo2}, gave four equivalent definitions of geometric finiteness for $\Gamma$:

\begin{theorem}\label{thm:gfcharacterization}

The followings are equivalent for discrete subgroups $\Gamma< \Isom(X)$:
\begin{enumerate}
\item
The quotient space $\bar{M}(\Gamma)=(\bar{X} - \Lambda)/ \Gamma$ has finitely many topological ends each of which is a ``cusp''.

\item The limit set $\Lambda(\Gamma)$ of $\Gamma$ consists entirely of conical limit points and bounded parabolic fixed points. 

\item The noncuspidal part of the convex core $\Core(M)$ of $M=X/ \Gamma$ is compact. 

\item
For some $\delta >0$, the uniform $\delta$-neighbourhood of the convex core, $N_\delta(\Core(M))$, has finite volume and there is a bound on the orders of finite subgroups of $\Gamma$. 
\end{enumerate}
\end{theorem}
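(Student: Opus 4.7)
The plan is to establish the four-way equivalence as a cyclic chain of implications, with condition (3)---compactness of the noncuspidal part of the convex core---as the central geometric notion. Before anything else I would fix a Margulis-type thick--thin decomposition of $M=X/\Gamma$, using the Margulis lemma for negatively pinched Hadamard manifolds: the thin part is a disjoint union of Margulis tubes and cusp neighborhoods, and the cuspidal part of $\Core(M)$ is its intersection with the cusp neighborhoods. Three inputs will be used repeatedly: (a) this Margulis decomposition; (b) the fact that the stabilizer $\Gamma_\xi$ of each parabolic fixed point $\xi$ is virtually nilpotent and acts properly discontinuously and cocompactly on any horosphere based at $\xi$ minus the point $\xi$; and (c) the exponential decay of horoball volumes in the radial direction.

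For (2) $\Rightarrow$ (3), I would cover $\Lambda$ by shadows of horoballs at the bounded parabolic fixed points and by standard cone neighborhoods at the conical limit points; by hypothesis these cover $\Lambda$, by $\Gamma$-equivariance and the bounded-parabolic assumption the cuspidal covering is finite modulo $\Gamma$, and a covering/compactness argument then yields compactness of the noncuspidal core. The implication (3) $\Rightarrow$ (4) follows because a uniform $\delta$-neighborhood of $\Core(M)$ decomposes into the $\delta$-neighborhood of the compact noncuspidal core (of finite volume by compactness) and finitely many cusp pieces of finite volume by (c), while the Margulis lemma applied at a basepoint in the compact noncuspidal core bounds the orders of finite subgroups. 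The chain closes with (4) $\Rightarrow$ (1), where the finite-volume and torsion-bound hypotheses force the topological ends of $\bar M(\Gamma)=(\bar X-\Lambda)/\Gamma$ to lie entirely inside the finitely many standard cusp regions, and with (1) $\Rightarrow$ (2), where one analyzes lifts to $\bar X$ of a limit point $\xi\in\Lambda$: the end-cusp correspondence identifies bounded parabolic fixed points with standard cusp ends, and any other limit point admits a geodesic ray asymptotic to it whose projection to $M$ must either accumulate in the compact noncuspidal core, yielding a conical limit point, or accumulate in a cusp end, yielding a parabolic fixed point.

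The step I expect to be the main obstacle is (1) $\Rightarrow$ (2), which requires ruling out pathological limit points---parabolic fixed points that fail to be bounded, or nonconical limit points that are not parabolic fixed points at all. Both phenomena are in principle compatible with there being only finitely many topological ends unless those ends are forced to carry the specific horoball-quotient model. In the variable curvature setting horoballs are no longer homogeneous, so the cusp model must be justified by the Heintze--Margulis structure theorem for subgroups of $\Isom(X)$ fixing a point at infinity. Once this cusp model is in place, the remaining coarse-geometric estimates linking geodesic rays in $X$ to ends of $M$ are fairly standard, but installing input (b) with enough precision is the delicate step on which the entire cyclic argument rests.
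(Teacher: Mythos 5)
First, a point about the comparison you are being measured against: the paper does not prove this theorem at all. It is Bowditch's theorem, stated verbatim with the citation \cite{Bo2} and used as a black box throughout; the only ``proof'' in the paper is that reference. Your outline does follow the broad architecture of Bowditch's actual argument --- the Margulis thick--thin decomposition, a cycle of implications organized around compactness of the noncuspidal part of $\Core(M)$ --- so at the level of strategy you are reconstructing the cited proof rather than inventing a new one. The problem is that, as written, the proposal is an outline of that architecture rather than a proof, and one of its standing inputs is false.

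Concretely: (i) Input (b) is wrong. A discrete parabolic subgroup $\Gamma_\xi$ acts properly discontinuously but in general \emph{not} cocompactly on a horosphere based at $\xi$; for example, a rank-one parabolic subgroup of $\Isom(\H^3)$ generated by $z\mapsto z+1$ acts on the horosphere $\cong \mathbb{R}^2$ with noncompact quotient. Cocompactness of $\Gamma_\xi$ on $\Lambda-\{\xi\}$ is precisely the definition of a \emph{bounded} parabolic fixed point, i.e.\ part of what the theorem is about, not a fact available in advance. Since you propose to use (b) ``repeatedly'' --- in particular the finite-volume estimate for the cusp pieces in $(3)\Rightarrow(4)$ silently relies on the cross-sections $\Hull(\Lambda)\cap H_t/\Gamma_\xi$ being compact --- you must instead prove that condition (3) forces every parabolic fixed point to be bounded, which is one of the genuinely hard steps in \cite{Bo2}. (ii) Your sketch of $(1)\Rightarrow(2)$ sends a geodesic ray either into ``the compact noncuspidal core'' or into a cusp end; but compactness of the noncuspidal core is condition (3), which is not yet available at that point in your cycle $(4)\Rightarrow(1)\Rightarrow(2)$. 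This is repairable (one can show $(1)\Rightarrow(3)$ fairly directly and route through it), but as stated the cycle does not close. (iii) The steps you compress into ``a covering/compactness argument'' for $(2)\Rightarrow(3)$, and the finiteness of the number of cusps needed in $(4)\Rightarrow(1)$ (which requires a uniform positive lower bound on the volume each cusp contributes, hence the torsion bound and the Margulis lemma), are the substantive content of Bowditch's paper and cannot be taken as routine.
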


If one of these equivalent conditions holds, the subgroup $\Gamma<\Isom(X)$ is said to be geometrically finite; otherwise, $\Gamma$ is said to be geometrically infinite. 

The main results of our paper are:

\begin{theorem}
\label{theo 1.3}
Suppose that $\Gamma< \Isom(X)$ is a  discrete subgroup. Then the followings are equivalent:

\begin{enumerate}
\item $\Gamma$ is geometrically infinite.

\item There exists a sequence of closed geodesics $\lambda_i\subset M=X/\Gamma$ which escapes every compact subset of $M$. 

\item The set of nonconical limit points of $\Gamma$ has the cardinality of the continuum. 

\end{enumerate}
\end{theorem}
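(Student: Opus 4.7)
The plan is to prove the cycle of implications $(1)\Rightarrow(2)\Rightarrow(3)\Rightarrow(1)$, where the last is immediate and the first contains the main extension of Bonahon's theorem to variable pinched curvature.

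For $(3)\Rightarrow(1)$, I argue by contrapositive. If $\Gamma$ is geometrically finite then, by part (2) of Theorem \ref{thm:gfcharacterization}, every point of $\Lambda(\Gamma)$ is either a conical limit point or a bounded parabolic fixed point. Since $\Gamma$ is countable, the set of parabolic fixed points is countable, so the nonconical limit points form an at most countable set, contradicting cardinality of the continuum.

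For $(1)\Rightarrow(2)$, I use criterion (3) of Theorem \ref{thm:gfcharacterization}: geometric infiniteness means the noncuspidal part of $\Core(M)$ is noncompact. Thus I can choose points $p_i\in\Core(M)$ escaping every compact subset while staying in the noncuspidal region, and lift them to points $\tilde p_i\in\Hull(\Lambda(\Gamma))\subset X$ lying outside a $\Gamma$-invariant family of Margulis horoballs based at bounded parabolic fixed points. Because $\tilde p_i$ lies in the convex hull of $\Lambda(\Gamma)$, one can extract a pair of limit points $\xi_i^\pm\in\Lambda(\Gamma)$ whose connecting geodesic passes within a uniform distance of $\tilde p_i$. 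I then approximate $\xi_i^\pm$ by fixed points of loxodromic elements (using density of loxodromic fixed points in $\Lambda(\Gamma)$ when $\Gamma$ is non-elementary; the elementary case is automatically geometrically finite) and apply a Schottky/ping-pong argument to produce a loxodromic element $\gamma_i\in\Gamma$ whose axis passes uniformly close to $\tilde p_i$. Its projection is a closed geodesic $\lambda_i\subset M$ passing uniformly near $p_i$, so the sequence $\lambda_i$ escapes every compact subset of $M$.

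For $(2)\Rightarrow(3)$, I adapt Bishop's Cantor-set construction. Fix a basepoint $o\in X$. I build inductively a binary tree of broken geodesic paths in $X$: to each finite binary word $I$ I attach a translate $\gamma_I\cdot\tilde\lambda_{n_I}$ of a lift of one of the escaping closed geodesics, with the index $n_I$ chosen so large that the attached arc lies at distance tending to infinity from the $\Gamma$-orbit of $o$ in $M$, and with the two children of $I$ corresponding to the two directions of travel along this arc (suitably separated in angle). Because the curvature is pinched, standard stability of quasi-geodesics ensures that each infinite binary word $\omega$ determines a genuine geodesic ray $r_\omega$ from $o$ with a well-defined endpoint $\xi_\omega\in\geo X$; distinct words give rays that eventually separate by more than any prescribed constant and hence distinct endpoints, producing $2^{\aleph_0}$ limit points. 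Each such $\xi_\omega$ is nonconical because the projection of $r_\omega$ to $M$ performs arbitrarily long excursions out of every fixed compact set, so no tubular neighborhood of $r_\omega$ can contain an infinite subset of $\Gamma\cdot o$ accumulating to $\xi_\omega$.

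The principal obstacle is $(1)\Rightarrow(2)$: Bonahon's original proof relies on pleated surfaces and the 3-dimensional topology of $M$, neither of which is available in general pinched negative curvature. The substitute requires both a careful geometric analysis of the noncuspidal part of the convex hull (so that the $\tilde p_i$ can be chosen uniformly away from all horoballs) and a ping-pong construction that keeps the resulting loxodromic axes from being pushed into the cusps; achieving the latter uniformly in $i$ is the technically delicate step.
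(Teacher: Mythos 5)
Your implications $(3)\Rightarrow(1)$ and $(2)\Rightarrow(3)$ follow the paper's route: the first is the same countability argument from Theorem \ref{thm:gfcharacterization}, and the second is essentially the paper's construction in Theorem \ref{Theo 5.1} (the paper indexes its family of broken paths by sequences in $\mathbb{N}^{\infty}$ recording winding numbers around each $\lambda_i$ rather than by binary direction choices, and it proves injectivity of $\tau\mapsto\tilde\gamma_\tau(\infty)$ by exhibiting a bi-infinite quasigeodesic joining two candidate endpoints; your ``eventual separation'' claim is the step that needs that argument, but the idea is the same).

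The genuine gap is in $(1)\Rightarrow(2)$. From a loxodromic element $\gamma_i$ whose axis passes uniformly close to $\tilde p_i$ you conclude that the closed geodesic $\lambda_i=\pi(A_{\gamma_i})$ escapes every compact subset because it ``passes uniformly near $p_i$.'' That is a non sequitur: escaping compact subsets means that for every compact $K\subset M$ only finitely many $\lambda_i$ meet $K$, i.e.\ the \emph{entire} closed geodesic must eventually be far from the basepoint, whereas your construction only controls one point of $\lambda_i$. A complete geodesic $\xi_i^-\xi_i^+$ passing near $\tilde p_i$ (and a fortiori the axis of a loxodromic element approximating its endpoints) has no reason to avoid a fixed compact core; generically it will dive back into it, exactly as in the two-ended counterexample the paper records after Lemma \ref{escape compact set}. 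This is why the paper's proof of $(1)\Rightarrow(2)$ works with loops rather than points: it produces, via the three-case analysis in Section \ref{sec:Bohanon}, a piecewise-geodesic \emph{loop} $w_n$ lying entirely outside $C_{n-1}$ and representing a loxodromic class of uniformly bounded combinatorial complexity, and then invokes Proposition \ref{free homotopic2}/Corollary \ref{free homotopic} to place the whole closed geodesic $w_n^*$ within a uniform distance $D$ of $w_n$, giving $d(x,w_n^*)\ge (n-1)R-D$. The technical heart of that step --- manufacturing loxodromic elements as uniformly short words in parabolic and elliptic isometries (Theorem \ref{proposition 3.16}, Corollary \ref{cor:BF}) so that the representing loop stays far out --- is precisely what your density-plus-ping-pong shortcut omits, and without some substitute for the global control of $\lambda_i$ the implication is not established.
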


\begin{corollary}
\label{coro 1.6}
If $\Gamma<  \Isom(X)$  is a  discrete subgroup then $\Gamma$ is geometrically finite if and only if every limit point of $\Gamma$ 
is either a conical limit point or a parabolic fixed point. 
\end{corollary}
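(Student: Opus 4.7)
The plan is to derive Corollary 1.6 directly from the tools already assembled, namely Bowditch's Theorem \ref{thm:gfcharacterization} (for one direction) and our main Theorem \ref{theo 1.3} (for the other). The corollary is a strict weakening of Bowditch's characterization in the ``only if'' direction (we drop the word ``bounded'' from ``bounded parabolic fixed point''), so only the converse needs argument, and it will be done by contrapositive using a cardinality count.

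For the forward direction, suppose $\Gamma$ is geometrically finite. Then by the equivalence (1)$\Leftrightarrow$(2) in Theorem \ref{thm:gfcharacterization}, every point of $\Lambda(\Gamma)$ is either a conical limit point or a bounded parabolic fixed point, and in particular is either a conical limit point or a parabolic fixed point. Nothing further is required here.

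For the reverse direction, I would argue the contrapositive: assume $\Gamma$ is geometrically infinite and produce a limit point that is neither conical nor parabolic. By Theorem \ref{theo 1.3}, geometric infiniteness implies the set of nonconical limit points of $\Gamma$ has the cardinality of the continuum. On the other hand, since $\Gamma$ is a discrete, hence countable, subgroup of $\Isom(X)$, the set of parabolic elements of $\Gamma$ is countable, and each parabolic isometry of a negatively pinched Hadamard manifold fixes a unique point in $\geo X$. Therefore the set of parabolic fixed points of $\Gamma$ is at most countable. Removing a countable set from a set of cardinality continuum leaves a set of cardinality continuum, so there exists (in fact, a continuum of) limit points which are neither conical nor parabolic, completing the contradiction.

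There is no real obstacle here; the content of the corollary is packaged inside Theorem \ref{theo 1.3}, and the only small observation needed is the countability of the parabolic fixed point set, which is immediate from discreteness of $\Gamma$ together with the fact that in negative curvature a parabolic isometry has exactly one fixed point on $\geo X$. I would present the proof as a short two-paragraph deduction immediately following the proof of Theorem \ref{theo 1.3}.
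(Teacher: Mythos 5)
Your proposal is correct and follows essentially the same route as the paper: the forward direction is immediate from Theorem \ref{thm:gfcharacterization}, and the converse is the contrapositive using the continuum of nonconical limit points from Theorem \ref{theo 1.3} against the countability of the parabolic fixed point set.
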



These results can be sharpened as follows. We refer the reader to section \ref{sec:ends} for the precise definitions of ends $e$ 
of the orbifolds $Y=\Core(M)$ and $\noncusp_{\varepsilon}(Y)$, of their {\em neighborhoods} $C\subset Y$  
and of their {\em end-limit sets} $\Lambda(C)$, $\Lambda(e)$, which are certain subsets   
of the set of non-conical limit points of $\Gamma$.

In \cite[Section 4]{FMS}, Falk, Matsuzaki and Stratmann conjectured the end-limit set of an end $e$ of $Y$ 
is countable if and only if $\Lambda(e)$ is the $\Gamma$-orbit of a (bounded) parabolic fixed point. A slight modification of 
the proof of Theorem \ref{theo 1.3} proves this conjecture:  

\begin{corollary}
\label{cusp end}
$\Lambda(e)$  is countable if and only if the end $e$ of $Y$ is a cusp. 
\end{corollary}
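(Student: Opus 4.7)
The plan is to localize the proof of Theorem \ref{theo 1.3} to a single end $e$ of $Y=\Core(M)$. The easy direction is essentially immediate from the definition of a cusp: suppose $e$ is a cusp, so that some neighborhood $C\subset Y$ of $e$ is, up to identification, the image in $M$ of a horoball $B_\xi\subset X$ centered at a bounded parabolic fixed point $\xi$, modulo the parabolic stabilizer $\Gamma_\xi$. Any geodesic ray in $X$ whose projection eventually lies in $C$ must eventually be contained in some $\Gamma$-translate $g(B_\xi)=B_{g\xi}$, and by strict convexity of horoballs in negative curvature such a ray terminates at $g\xi\in\geo X$. Thus $\Lambda(e)\subseteq\Lambda(C)\subseteq\Gamma\cdot\xi$, which is countable.

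For the converse I would assume $e$ is not a cusp and produce an uncountable subset of $\Lambda(e)$. The first step is to show that, under this assumption, there exists an escaping sequence of closed geodesics $\lambda_i\subset M$ which lie inside successively smaller neighborhoods of $e$ in $\noncusp_\varepsilon(Y)$. To do this I would replay the proof of Theorem \ref{theo 1.3}(1)$\Rightarrow$(2), but localized: rather than running Bonahon's diagonal argument throughout $\noncusp_\varepsilon(Y)$, I would perform it inside a nested sequence of neighborhoods $C_1\supset C_2\supset\cdots$ of $e$. Since $e$ is not a cusp, each $C_k$ fails to be relatively compact in $\noncusp_\varepsilon(Y)$, so the non-compactness hypothesis driving the Bonahon-type construction persists after localization and yields closed geodesics $\lambda_i\subset C_i$ that escape every compact subset of $Y$.

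Next I would apply the Cantor-type construction from the proof of Theorem \ref{theo 1.3}(2)$\Rightarrow$(3) to this localized escaping sequence. Each $\lambda_i$ is the projection of the axis of a hyperbolic element $\gamma_i\in\Gamma$, and the construction produces an uncountable family of admissible infinite words in the $\gamma_i^{\pm 1}$, each determining a nonconical limit point of $\Gamma$. Because the $\lambda_i$ sit in neighborhoods of $e$ of shrinking diameter, the corresponding geodesic rays in $X$ project into arbitrarily small neighborhoods of $e$, so the resulting nonconical limit points all lie in $\Lambda(e)$, giving $|\Lambda(e)|=2^{\aleph_0}$.

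The hard part will be the localization itself: verifying that both Bonahon's and Bishop's constructions can be carried out while tracking a single end $e$, rather than drifting between the ends of $Y$. Concretely, one must control, via the curvature pinching, the location in $X$ of the axes of the group elements produced at each stage, and confirm that the endpoints at infinity of these axes accumulate only at the ideal boundary of neighborhoods of $e$. Once this control is in place, the two ingredients of Theorem \ref{theo 1.3} assemble to give the corollary, and hence verify the conjecture of Falk, Matsuzaki and Stratmann.
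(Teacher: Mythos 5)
Your proposal follows essentially the same route as the paper: the cuspidal direction is immediate because $\Lambda(e)$ is then a single $\Gamma$-orbit of a bounded parabolic point, and for the converse the paper likewise localizes both halves of Theorem \ref{theo 1.3} to the end (via Theorem \ref{thm:ends-of-noncusp}, applied to the end of $\noncusp_{\varepsilon}(Y)$ induced by the non-cuspidal end $e$ of $Y$). The one technical wrinkle your outline hides inside ``the hard part'' is the case where the loxodromic elements produced have translation lengths tending to zero; the paper resolves it by showing that the Margulis regions $Mar(\omega_{n}, r_{k}/2)$, and hence the associated closed geodesics, remain inside the nested neighborhoods $C_{k}$ of $e$.
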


Furthermore: 

\begin{corollary}\label{cor:ends} 
Let $C\subset Y$ be an unbounded complementary component of a compact subset $K\subset Y$. 
The  $\Lambda(C)$  is countable if and only if $C$ is Hausdorff-close to a finite union of cuspidal neighborhoods of cusps 
in $Y$. 
\end{corollary}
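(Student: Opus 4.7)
My plan is to deduce Corollary~\ref{cor:ends} from Corollary~\ref{cusp end} by decomposing $C$ according to the ends of $Y$ that it contains, and then arguing that only finitely many ends can fit in $C$.

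\textbf{Easy direction.} Suppose $C$ is Hausdorff-close to a finite union $U_1\cup\cdots\cup U_k$ of cuspidal neighborhoods of cusps $e_1,\ldots,e_k$ of $Y$. Each $U_i$ is a neighborhood of a cusp $e_i$, so by Corollary~\ref{cusp end} each $\Lambda(e_i)$ is countable; in fact $\Lambda(U_i)$ is contained in the $\Gamma$-orbit of the bounded parabolic fixed point associated to $e_i$. Using the definition of the end-limit set (Section~\ref{sec:ends}), $\Lambda(C)$ depends only on the Hausdorff-equivalence class of $C$ in $Y$, so $\Lambda(C)=\bigcup_{i=1}^k \Lambda(U_i)$, a finite union of countable sets, hence countable.

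\textbf{Hard direction.} Suppose $\Lambda(C)$ is countable. Any end $e$ of $Y$ supported in $C$ has $\Lambda(e)\subset\Lambda(C)$, so $\Lambda(e)$ is countable and by Corollary~\ref{cusp end} the end $e$ is a cusp, with a standard cuspidal neighborhood $U_e\subset C$ (shrinking if necessary so that $U_e\cap K=\emptyset$). The next task is to show there are only finitely many such ends. For this I would use the Margulis lemma together with the geometric-finiteness theory for parabolic points in pinched Hadamard manifolds (Bowditch, and the machinery of Section~\ref{sec:ends}): fix a Margulis constant $\varepsilon$; each cuspidal neighborhood of $Y$ meets the $\varepsilon$-thin part in a component whose boundary lies at definite distance from the boundary of any other thin component. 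Since $\partial C\subset K$ is compact, only finitely many cusp neighborhoods $U_{e_1},\ldots,U_{e_k}$ can fit inside $C$ with boundary on $K$.

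\textbf{Boundedness of the complement.} It remains to show $C\setminus(U_{e_1}\cup\cdots\cup U_{e_k})$ has bounded diameter in $Y$. Otherwise pick a sequence $y_n\in C$ with $d(y_n,K)\to\infty$ and staying at uniform distance from every $U_{e_i}$. Lifting to $X$ and taking visual limits of geodesic segments from a fixed base point to preimages of $y_n$, one extracts a limit point $\xi\in\Lambda(C)$ that is not a conical limit point (the $y_n$ leave every cuspidal thick–thin decomposition piece). Applying the construction from the proof of Theorem~\ref{theo 1.3} in this end-relative form, the set of $\xi$'s produced this way has the cardinality of the continuum, contradicting that $\Lambda(C)$ is countable. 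Hence $C$ is Hausdorff-close to $\bigsqcup_i U_{e_i}$, as claimed.

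\textbf{Main obstacle.} The main subtlety is the last step: localizing the continuum-of-nonconical-limit-points construction from the proof of Theorem~\ref{theo 1.3} so that all produced limit points belong to $\Lambda(C)$ rather than being absorbed by other ends or by cusps of $Y$. This requires choosing the approximating closed geodesics so that they penetrate deeply into $C$ away from the cuspidal neighborhoods $U_{e_i}$, which should be possible by working with the noncuspidal part $\noncusp_\varepsilon(Y)\cap C$ and using that it is noncompact under the assumption that the complement of the $U_{e_i}$ in $C$ is unbounded.
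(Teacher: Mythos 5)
Your easy direction matches the paper's. In the hard direction there are two genuine problems. First, the finiteness argument fails as stated: a cuspidal end of $Y$ supported in $C$ can have its Margulis component, and hence all of its cuspidal neighborhoods, lying entirely in the interior of $C$ far from $K$, so its relative boundary need not be ``on $K$.'' Compactness of $\partial C\subset K$ therefore only bounds the number of thin components \emph{meeting} $K$, not the number of cuspidal ends supported in $C$; a priori $C$ could contain infinitely many cusps escaping to infinity, each contributing only a countable orbit to $\Lambda(C)$, and excluding this is not a local count near $K$ but is part of the same global boundedness question as your final step. Second, and decisively, that final step (``boundedness of the complement'') is exactly the hard content of the corollary and is not proved: you assert that the continuum construction from Theorem \ref{theo 1.3} can be localized so that all the resulting nonconical limit points land in $\Lambda(C)$, and you flag this yourself as the main obstacle. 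That localization is precisely Theorem \ref{thm:ends-of-noncusp} and Corollary \ref{cor:ends-of-noncusp} of the paper, and it requires genuinely new work beyond Theorem \ref{theo 1.3}: one must produce closed geodesics exiting the given end of $\noncusp_\varepsilon(Y)$ (which forces a separate treatment of the case where the loxodromic elements $\omega_n$ have translation lengths tending to $0$, using Proposition \ref{free homotopic2} to trap the Margulis tubes $Mar(\omega_{i_k},r_k/2)$ inside the nested neighborhoods), and one must verify that each constructed point $\tilde\gamma_\tau(\infty)$ lies in $\Lambda(C_i)$ for every $i$ by choosing nested lifts $E_i$ and checking that the ideal endpoints of the axes lie in $\Lambda(\mathrm{Stab}_\Gamma(E_i))$. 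None of this appears in your sketch.

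For comparison, the paper sidesteps your end-by-end decomposition by arguing the contrapositive: if $C$ is not Hausdorff-close to a finite union of cuspidal neighborhoods, then $C\cap\noncusp_\varepsilon(Y)$ contains an unbounded component $C'$, and Corollary \ref{cor:ends-of-noncusp} (the localized continuum statement) gives that $\Lambda(C')\subset\Lambda(C)$ has the cardinality of the continuum. If you allow yourself that corollary, your argument essentially collapses to the paper's and the finiteness issue also disappears (boundedness of $C\cap\noncusp_\varepsilon(Y)$ implies finiteness of the cusps in $C$ by the separation of Margulis components); without it, the proof is incomplete.
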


\medskip 
By Theorem \ref{theo 1.3}, the set of nonconical limit points of a  discrete isometry subgroup $\Gamma$ has the cardinality of the continuum if $\Gamma$ is geometrically infinite. It is natural to ask: 

\begin{ques}
What is the  Hausdorff dimension of the set of nonconical limit points of $\Gamma$? Here, the Hausdorff dimension is defined with respect to any of the {\em visual metrics} on $\geo X$, see \cite{Pau}. 
\end{ques}

Partial results have been obtained  by Fern\'andez and Meli\'an \cite{FM} in the case of 
Fuchsian subgroups of the 1st kind, $\Gamma< \Isom(\H^2)$ and by Bishop and Jones \cite{BJ} in the case of finitely generated discrete torsion-free subgroups $\Ga< \Isom(\H^3)$ of the 2nd kind, such that 
the manifold $\H^3/\Gamma$ has injectivity radius bounded below. In both cases, the Hausdorff dimension of the set of nonconical limit points equals the Hausdorff dimension of 
the entire limit set. 

\medskip 
Below is an outline of the proof of Theorem  \ref{theo 1.3}. Our proof of the implication (1)$\Rightarrow$(2) mostly follows Bonahon's argument with the following exception: At some point of the proof Bonahon has to show that certain elements of $\Gamma$ are loxodromic. For this he uses a calculation with $2\times 2$ parabolic matrices: If $g, h$ are parabolic elements of $\Isom(\H^3)$ generating a nonelementary subgroup then either $gh$ or $hg$ is non-parabolic. This argument is no longer valid for isometries of higher dimensional hyperbolic spaces, let alone Hadamard manifolds. We replace this computation with a more difficult argument showing that there exists a number $\ell=\ell(n,\kappa)$ such that for every $n$-dimensional Hadamard manifold $X$ with sectional curvatures pinched between $-\kappa^2$ and $-1$ and for any pair of parabolic isometries $g, h\in \Isom(X)$ generating a nonelementary discrete subgroup,  a certain word $w=w(g,h)$ of length $\le \ell$ is loxodromic (Theorem \ref{proposition 3.16}).  We later found a stronger result by Breuillard and Fujiwara \cite{BF} that there exists a loxodromic element of uniformly bounded word length in the discrete nonelementary subgroup generated by any isometry subset (Corollary \ref{cor:BF}), which can be used to deal with nonelementary groups generated by elliptic isometries. 

Our proof of the implication (2)$\Rightarrow$(3) is similar to Bishop's but is more coarse-geometric in nature. Given a sequence of closed geodesics $\lambda_i$ in $M$ escaping compact subsets, we define a family of proper piecewise geodesic paths $\gamma_{\tau}$ in $M$  consisting of alternating geodesic arcs $\mu_i, \nu_i$, such that $\mu_i$ connects $\lambda_i$ to $\lambda_{i+1}$ and is orthogonal to both, while the image of $\nu_i$ is contained in the loop $\lambda_{i}$. If the lengths of $\nu_i$ are sufficiently long, then the path $\gamma_{\tau}$ lifts to a uniform quasigeodesic $\tilde{\gamma}_{\tau}$ in $X$, which, therefore, is uniformly close to a geodesic $\tilde{\gamma}^*_{\tau}$. Projecting the latter to $M$, we obtain a geodesic $\gamma^*_{\tau}$ uniformly close to $\gamma_{\tau}$, which  implies that the ideal point 
$\tilde{\gamma}^*_{\tau}(\infty)\in \geo X$ is a nonconical limit point of $\Gamma$. Different choices of the arcs $\nu_i$ yield distinct limit points, which, in turn implies that $\Lambda(\Gamma)$ contains a set of nonconical limit points with the cardinality of the continuum. 
The direction (3)$\Rightarrow$(1) is a direct corollary of Theorem \ref{thm:gfcharacterization}.

\medskip 
{\bf Organization of the paper.} In Section \ref{sec:review}, we review the angle comparison theorem \cite[Proposition 1.1.2]{Bo2} for negatively pinched Hadamard manifolds and derive some useful geometric inequalities. In Section \ref{sec:elementary}, we review the notions of elementary  subgroups of isometries  of negatively pinched Hadamard manifolds, \cite{Bo2}. In Section \ref{sec:thick-thin}, we review  the thick-thin decomposition in negatively pinched Hadamard manifolds and some properties of parabolic subgroups, \cite{Bo2}. In Section \ref{sec:quasigeodesics}, we use the results in Section \ref{sec:review} to prove that certain piecewise geodesic paths in Hadamard manifolds with sectional curvatures $\leq -1$ are uniform quasigeodesics. In Section \ref{sec:loxodromic}, we explain how to  produce loxodromic isometries as words $w(g,h)$ of  uniformly bounded  length, where $g, h$ are 
parabolic isometries of $X$  with distinct fixed points.  In Section \ref{sec:Bohanon}, we generalize Bonahon's theorem, the implication (1)$\Rightarrow$(2) in Theorem \ref{theo 1.3}. In Section \ref{sec:continuum}, we  construct  the set of nonconical limit points with the cardinality of the continuum and complete the proof of Theorem \ref{theo 1.3}. Lastly, in Section \ref{sec:ends} we prove Corollaries \ref{cusp end} and \ref{cor:ends}. \\

{\bf Acknowledgements.} We would like to thank the referees for the useful comments and suggestions. The first author was partly supported by the NSF grant  DMS-16-04241 as well as by 
KIAS (the Korea Institute for Advanced Study) through the KIAS scholar program. 
Some of this work was done during his stay at KIAS and he is thankful to KIAS for its hospitality. 
The second author was partly supported by the NSF grant DMS-17-00814.

\section{Notation}\label{sec:notation}

In a metric space $(Y,d)$,  we will use the notation $B(a,r)$ to denote the {\em open $r$-ball} centered at $a$ in $Y$. For a 
 subset $A\subset Y$ and a point $y\in Y$, we will denote by $d(y,A)$ the {\em minimal distance} from $y$ to $A$, i.e. 
$$
d(y,A):= \inf \{d(y,a) \mid  a\in A\}.  
$$
Similarly, for two subsets $A, B\subset Y$ define their minimal distance as
$$
d(A,B)= \inf \{d(a,b) \mid  a\in A, ~~b\in B\}. 
$$
We will use the notation $l(p)$ or $length(p)$ for the length of  a rectifiable path $p$ in a metric space. 

We use the notation $\bar{N}_r(A)$ for the {\em closed} $r$-neighborhood of $A$ in $Y$:
$$
\bar{N}_r(A)= \{y\in Y: d(y,A)\le r\}. 
$$

The \emph{Hausdorff distance} $\hd(Q_1, Q_2)$ between two closed subsets $Q_{1}$ and $Q_{2}$ of $(Y,d)$ 
is the infimum of $r\in [0, \infty)$ such that  $Q_{1}\subseteq \bar{N}_{r}(Q_{2})$ and $Q_{2}\subseteq \bar{N}_{r}(Q_{1})$.

\medskip 
Throughout the paper, $X$ will denote an $n$-dimensional negatively pinched Hadamard manifold, unless otherwise stated; we assume that all sectional curvatures of $X$ lie between $-\kappa^{2}$ and $-1$, where $\kappa>0$. Note that the lower bound $-\kappa^{2}$ is used essentially in a property of quasiconvex subsets (Proposition \ref{convex}), 
 Margulis Lemma  (Section \ref{sec:thick-thin}), in Sections \ref{sec:loxodromic},  \ref{sec:generalization} and  \ref{sec:ends}.   
We let $d$ denote the Riemannian distance function on $X$ and let $\Isom(X)$ denote the isometry group of $X$. 

For a Hadamard manifold $X$, the exponential map is a diffeomorphism, in particular, 
$X$ is diffeomorphic to $\R^{n}$.
Then $X$ can be compactified by adjoining the ideal boundary sphere $\geo X$, and we will use the notation $\bar{X}=X\cup \geo X$ for this compactification. The space $\bar{X}$ is homeomorphic to the closed $n$-dimensional  ball.

In this paper, geodesics will be always parameterized by their arc-length; we will conflate geodesics in $X$ with their images.

\medskip
Given a closed subset $A\subseteq X$ and $x\in X$, we write
$$\textup{Proj}_{A}(x)=\lbrace y\in A \mid d(x, y)=d(x, A) \rbrace $$
for the nearest-point projection of $x$ to $A$. It consists of all points in $A$ which are closest to $x$. If $A$ is convex, then $\textup{Proj}_{A}(x)$ is a singleton. 

Hadamard spaces are uniquely geodesic and we will let $xy\subset X$ denote the geodesic segment  connecting $x\in X$ to $y\in X$. Similarly, given $x\in X$ and $\xi\in \geo X$ we will use the notation $x\xi$ for the unique geodesic ray emanating from $x$ and asymptotic to $\xi$; for two distinct points $\xi, \eta\in \geo X$, we use the notation $\xi\eta$ to denote the unique (up to reparameterization) geodesic asymptotic to $\xi$ and $\eta$.  

Given $\xi\in \geo X$, horospheres about $\xi$ are level sets of a Busemann function $h$ about $\xi$. For details of Busemann functions, see \cite{Ballmann, Bo2} (notice that Bowditch uses a nonstandard notation for Busemann functions, which are negatives of the standard Busemann functions). A set of the form $h^{-1}((-\infty, r])$ for $r\in \mathbb{R}$ is called a {\em horoball} about $\xi$. Horoballs are convex.

Given points $P_{1}, P_{2}, \cdots, P_{m}\in X$ we let $[P_{1}P_{2}\cdots P_{m}]$ denote the geodesic polygon in $X$ which is the union of geodesic segments $P_{i}P_{i+1}$, $i$ taken modulo $m$.

Given two distinct points $x, y\in X$, and a point $q\in xy$, we define   the {\em normal hypersurface} $\mathscr{N}_q(x,y)$, i.e. 
the image of the normal exponential map to the segment $xy$ at the point $q$:
$$ \mathscr{N}_q(x,y)= \exp_q(T_q^{\perp}(xy)),$$
where $T_q^{\perp}(xy)\subset T_qX$ is the orthogonal complement in the tangent space at $q$ to the segment $xy$. In the special case when $q$ is the midpoint of $xy$, $\mathscr{N}_q(x,y)$ is the {\em perpendicular bisector} of the segment $xy$, and we will denote it $\Bis(x,y)$. Similarly, we define the normal hypersurface $\mathscr{N}_{q}(\xi, \eta)$ for any point $q$ in the biinfinite geodesic $\xi\eta$. 

Note that if $X$ is a real-hyperbolic space, then $\Bis(x,y)$ is totally geodesic and equals the set of points equidistant from $x$ and $y$. For general Hadamard spaces, this is not the case. However, if $X$ is $\delta$-hyperbolic, then each 
$\mathscr{N}_p(x,y)$ is $\delta$-quasiconvex, see Definition \ref{def:qc}.

We let $\delta$ denote the {\em hyperbolicity constant} of $X$; hence, $\delta\le \cosh^{-1}(\sqrt{2})$. We will use the notation $\Hull(A)$
for the  {\em closed convex hull} of a subset  $A\subset  X$, i.e. the intersection of all closed convex subsets of $X$ containing $A$. The notion of the closed  convex hull extends to the closed subsets of $\geo X$ as follows. Given a closed  subset  $A\subset \geo X$, we denote by  $\Hull(A)$  the smallest closed convex subset of $X$ whose
accumulation set in $\bar{X}$ equals $A$. (Note that  $\Hull(A)$ is nonempty as long as $A$ contains more than one point.)

For a subset $A\subset X$ the {\em quasiconvex hull}  $\textup{QHull}(A)$ of $A$ in $X$ is defined as the union of all geodesics connecting points of $A$. Similarly, for a closed subset 
$A\subset \geo X$, the quasiconvex hull  $\textup{QHull}(A)$ is  the union of all biinfinite geodesics asymptotic to points of $A$. 
Then $\textup{QHull}(A)\subset  \Hull(A)$.

\medskip 
We will use the notation 
$\Gamma$ for a discrete  subgroup of isometries of $X$. We let $\Lambda=\Lambda(\Gamma)\subset \geo X$ denote the {\em  limit set} of $\Gamma$, i.e. the accumulation set in $\geo X$  
of one (equivalently, any) $\Ga$-orbit in $X$. The group  $\Gamma$ acts properly discontinuously on $\bar{X}\setminus \Lambda$, \cite[Proposition 3.2.6]{Bo2}. We obtain an orbifold with boundary
$$
\bar{M}= \left(\bar{X}\setminus \Lambda \right)/ \Gamma .$$
If $\Gamma$ is torsion-free, then $\bar{M}$ is a partial compactification of the quotient manifold $M=X/ \Gamma$. We let $\pi: X\rightarrow M$ denote the covering projection.

\section{Review of negatively pinched Hadamard manifolds}
\label{sec:review}

\subsection{Metric comparison inequalities} 

For any triangle $[ABC]$ in $(X, d)$, we define a comparison triangle $[A'B'C']$ for $[ABC]$ in $(\mathbb{H}^{2}, d')$ as follows.

\begin{definition}
For a triangle  $[ABC]$ in $(X, d)$, let $A', B', C'$ be $3$ points in the hyperbolic plane  $(\H^{2}, d')$ satisfying that $d'(A', B')=d(A, B), d'(B', C')=d(B, C)$ and $d'(C', A')=d(C, A)$. Then $[A'B'C']$ is called a {\em comparison triangle} for $[ABC]$. 

\end{definition}

In general, for any geodesic polygon $[P_{1}P_{2}\cdots P_{m}]$ in $(X, d)$, we define a comparison polygon $[P'_{1}P'_{2}\cdots P'_{m}]$ for $[P_{1}\cdots P_{m}]$ in $(\mathbb{H}^{2}, d')$.

\begin{definition}
For any geodesic polygon $[P_{1}P_{2}\cdots P_{m}]$ in $X$, we pick points $P'_{1},  \cdots, P'_{m}$ in  $\mathbb{H}^{2}$ such that $[P'_{1}P'_{i}P'_{i+1}]$ is a comparison triangle for $[P_{1}P_{i}P_{i+1}]$ and the triangles  $[P'_{1}P'_{i-1}P'_{i}]$ and $[P'_{1}P'_{i}P'_{i+1}]$ lie on different sides of $P'_{1}P'_{i}$  for each $2\leq i \leq m-1$. The geodesic polygon $[P'_{1}P'_{2}\cdots P'_{m}]$ is called a \emph{comparison polygon} for $[P_{1}P_{2}\cdots P_{m}]$. 

\end{definition}

\begin{remark}
Such a comparison polygon $[P'_{1}P'_{2}\cdots P'_{m}]$  is not necessarily convex and embedded\footnote{I.e. the natural map $S^1\to \H^2$ defined by tracing the oriented edges of the polygon in the cyclic order need not be injective.}. In the rest of the section, we have additional assumptions for the polygons $[P_{1}P_{2}\cdots P_{m}]$.  Under these assumptions,  their comparison polygons in $\mathbb{H}^{2}$ are embedded and convex, see Corollary \ref{lemma 2.5}. 

\end{remark}

One important property of negatively pinched Hadamard manifolds $X$ is the following 
{\em angle comparison} theorem; cf. \cite{CE}. 

\begin{proposition}\cite[Proposition 1.1.2]{Bo2}
\label{lemma 2.3}
For a triangle $[ABC]$ in $(X, d)$, let $[A'B'C']$ denote a comparison triangle for $[ABC]$. Then $\angle ABC \leq \angle A'B'C', \angle BCA \leq \angle B'C'A'$ and $\angle CAB \leq \angle C'A'B'$. 
\end{proposition}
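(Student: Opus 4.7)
The plan is to derive this from the upper curvature bound via a Jacobi field (Rauch) comparison, which is the standard route to the CAT$(-1)$ inequality for Hadamard manifolds whose sectional curvatures are bounded above by $-1$. Since $X$ is complete and simply connected with sectional curvatures $\le -1$, the comparison will go directly between $X$ and the model space $\H^2$ with constant curvature $-1$; no cut locus issues arise because geodesic triangles in $X$ are minimizing.

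First I would set up the hinge at one vertex, say $B$. Let $\gamma_A,\gamma_C$ be the unit-speed geodesics from $B$ to $A$ and $C$, of lengths $c=d(A,B)$ and $a=d(B,C)$, with initial angle $\theta=\angle ABC$. Define
\[
F(s,t) \;=\; d\bigl(\gamma_A(s),\gamma_C(t)\bigr), \qquad (s,t)\in[0,c]\times[0,a],
\]
and let $F^{\H}$ be the analogous distance function for a hinge at a point $B''\in\H^2$ issuing two geodesic rays of the same lengths with the same opening angle $\theta$. The key analytic step is Rauch's comparison theorem applied to the Jacobi fields along the interpolating geodesics joining $\gamma_A(s)$ to $\gamma_C(t)$ (one standard packaging is to work on the ruled surface swept out by these segments and bound the second variation of arclength by the sectional curvature of the tangent planes). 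The upper curvature bound $-1$ forces $F$ to be dominated by $F^{\H}$, giving the pointwise inequality $F(s,t)\le F^{\H}(s,t)$; in particular
\[
d(A,C)\;=\;F(c,a)\;\le\;F^{\H}(c,a).
\]

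Next I would translate this distance inequality into an angle inequality using the hyperbolic law of cosines in $\H^2$. For fixed side lengths $c$ and $a$ at a vertex $B''\in\H^2$,
\[
\cosh F^{\H}(c,a)\;=\;\cosh c\cosh a-\sinh c\sinh a\cos\theta
\]
is a strictly increasing function of the opening angle $\theta\in[0,\pi]$. Now consider the comparison triangle $[A'B'C']$ in $\H^2$: by its defining property $d(A',C')=d(A,C)$ and the adjacent side lengths at $B'$ match those at $B$, so
\[
\cosh d(A',C')\;=\;\cosh c\cosh a-\sinh c\sinh a\cos\theta',
\]
where $\theta'=\angle A'B'C'$. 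Combining with $d(A,C)\le F^{\H}(c,a)$, the monotonicity above yields $\theta\le\theta'$, i.e.\ $\angle ABC\le\angle A'B'C'$. The same argument applied at the other two vertices gives the remaining two angle inequalities.

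The main obstacle is the Rauch/Jacobi field comparison $F\le F^{\H}$, which requires a careful second-variation estimate on the variation of geodesics joining points of $\gamma_A$ to points of $\gamma_C$; once this is in place, the rest is an elementary monotonicity argument in $\H^2$. As a sanity check, the special case $s=c$, $t=a$ recovers the classical Toponogov-type inequality for the opposite side, and the limit $s\to 0$ recovers the infinitesimal statement that the Alexandrov angle in $X$ is bounded above by the Euclidean-to-hyperbolic angle comparison, which is consistent with the $CAT(-1)$ viewpoint.
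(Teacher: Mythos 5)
The paper does not actually prove this statement: it is quoted from \cite[Proposition 1.1.2]{Bo2} with a pointer to \cite{CE}, and the route you outline (a Rauch/Jacobi-field comparison to get a hinge inequality, then the hyperbolic law of cosines to convert it into an angle inequality) is exactly the standard way those references establish it. However, your key comparison inequality points the wrong way. For sectional curvature $\le -1$, the Rauch comparison says that Jacobi fields along the geodesics issuing from $B$ grow \emph{at least} as fast as in $\H^2$, so the two sides of the hinge spread apart faster than in the model; the correct conclusion is $F(s,t)\ge F^{\H}(s,t)$, not $F\le F^{\H}$. The inequality $F\le F^{\H}$ is the Toponogov hinge comparison associated with a \emph{lower} curvature bound. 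As a sanity check: in the hyperbolic plane of curvature $-4$ (which has $K\le -1$), a right hinge with two unit sides has third side of length $\tfrac12\cosh^{-1}(\cosh^2 2)\approx 1.67$, whereas the corresponding hinge in $\H^2$ has third side $\cosh^{-1}(\cosh^2 1)\approx 1.50$.

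The error propagates into your last step. Writing $g(\phi)=\cosh c\cosh a-\sinh c\sinh a\cos\phi$, you have $\cosh d(A',C')=g(\theta')$ and $\cosh F^{\H}(c,a)=g(\theta)$; from your stated inequality $d(A',C')=d(A,C)\le F^{\H}(c,a)$ and the monotonicity of $g$ one deduces $\theta'\le\theta$, which is the \emph{reverse} of the proposition, so the conclusion you draw does not follow from your premise. With the corrected inequality $d(A,C)\ge F^{\H}(c,a)$ the same monotonicity argument gives $g(\theta')\ge g(\theta)$ and hence $\angle ABC=\theta\le\theta'=\angle A'B'C'$, as required. So the architecture of your argument is the right one and matches the cited source, but the direction of the Rauch/hinge comparison must be flipped; with that correction (and the usual care in globalizing the Jacobi-field estimate along the ruled surface of connecting geodesics, i.e.\ the Cartan--Alexandrov--Toponogov argument) the proof goes through.
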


Proposition \ref{lemma 2.3}  implies  some useful  geometric inequalities in $X$: 

\begin{corollary}
\label{lemma 2.2}
Consider a triangle in $X$ with vertices $ABC$ such that the angles at $A, B, C$ are $\alpha, \beta,  \gamma$ and the sides opposite to $A, B, C$ have lengths $a, b, c$, respectively. If $\gamma \geq \pi /2$, then 
$$\cosh a \sin \beta \leq 1.$$

\end{corollary}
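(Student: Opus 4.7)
The plan is to reduce the statement to the hyperbolic plane via Proposition \ref{lemma 2.3} and then conclude by a direct computation in $\H^2$ using a standard right-triangle identity. First I would form a comparison triangle $[A'B'C']\subset \H^2$ with side lengths $a'=a$, $b'=b$, $c'=c$, and denote its angles by $\alpha',\beta',\gamma'$. Proposition \ref{lemma 2.3} gives $\gamma' \geq \gamma \geq \pi/2$ and $\beta' \geq \beta$. Since the angle sum of a hyperbolic triangle is strictly less than $\pi$ and $\gamma' \ge \pi/2$, we must have $\beta' < \pi/2$, so both $\beta$ and $\beta'$ lie in $[0,\pi/2)$, on which $\sin$ is increasing, so $\sin\beta \le \sin\beta'$. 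As $a=a'$, it is therefore enough to show $\cosh a'\sin\beta' \le 1$ in $\H^2$.

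To finish in $\H^2$, I would drop the perpendicular from $A'$ to the line through $B'$ and $C'$ and let $D'$ be its foot. Because $\gamma' \ge \pi/2$, the point $C'$ lies on the segment $B'D'$ (with $D'=C'$ precisely when $\gamma'=\pi/2$), so the ray $B'D'$ coincides with the ray $B'C'$ and the angle of the right triangle $A'B'D'$ at $B'$ is still $\beta'$. Letting $\alpha''$ denote the angle at $A'$ in this right triangle, the classical hyperbolic identity for a right triangle yields
$$\cos\alpha'' \;=\; \cosh(B'D')\,\sin\beta',$$
where $B'D'$ is the leg opposite $A'$. Combining $\cos\alpha'' \le 1$ with $B'D' \ge B'C' = a'$ gives
$$\cosh a'\,\sin\beta' \;\le\; \cosh(B'D')\,\sin\beta' \;=\; \cos\alpha'' \;\le\; 1,$$
which, together with Step 1, is the desired inequality.

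The only delicate point is the step $\sin\beta \le \sin\beta'$: Proposition \ref{lemma 2.3} only guarantees $\beta \le \beta'$, so I also need $\beta'\in[0,\pi/2)$, and this is where the hypothesis $\gamma\ge\pi/2$ together with the hyperbolic angle-sum bound is essential. Once this is in hand, the rest is a routine use of hyperbolic right-triangle trigonometry and the monotonicity of $\cosh$ on $[0,\infty)$.
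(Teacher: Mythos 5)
Your proof is correct and takes essentially the same route as the paper: pass to a comparison triangle in $\H^2$ via Proposition \ref{lemma 2.3} and apply the right-triangle identity $\cos\alpha=\cosh a\,\sin\beta$ to an auxiliary right triangle (the paper erects the perpendicular to $B'C'$ at $C'$ meeting $A'B'$, so the relevant leg equals $a$ exactly, while you drop the perpendicular from $A'$ and then invoke monotonicity of $\cosh$). Your explicit check that $\gamma'\ge\pi/2$ forces $\beta'<\pi/2$, so that $\beta\le\beta'$ really gives $\sin\beta\le\sin\beta'$, is a point the paper leaves implicit.
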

\proof 
Let  $[A'B'C']$  be a comparison triangle for $[ABC]$ in $(\H^{2}, d')$. Let $\alpha ', \beta ', \gamma ' $ denote the angles at $A', B', C'$ respectively as in Figure 1. By Proposition \ref{lemma 2.3},  $d'(A', B')=c, d'(A', C')=b, d'(B',C')=a$ and $\beta'\geq \beta, \gamma'\geq \gamma \geq \pi /2$. Take the point $C''\in A'B'$ such that $\angle B'C'C''=\pi /2$. In the right triangle $[B'C'C'']$ in $\H^{2}$,   we have $\cosh a \sin \beta '=\cos (\angle C'C''B')$, see \cite[Theorem 7.11.3]{AB}. So we obtain the inequality: 
$$\cosh a \sin \beta \leq \cosh a \sin \beta '\leq 1. \qedhere$$


\begin{remark}
\label{boundary inequality 1}
If $A\in \partial_{\infty}X$, we use a sequence of triangles in $X$ to approximate the triangle $[ABC]$ and prove that $\cosh a \sin \beta \leq 1$ still holds by continuity. 

\end{remark}

\begin{figure}[H]
\centering
\includegraphics[width=3.5in]{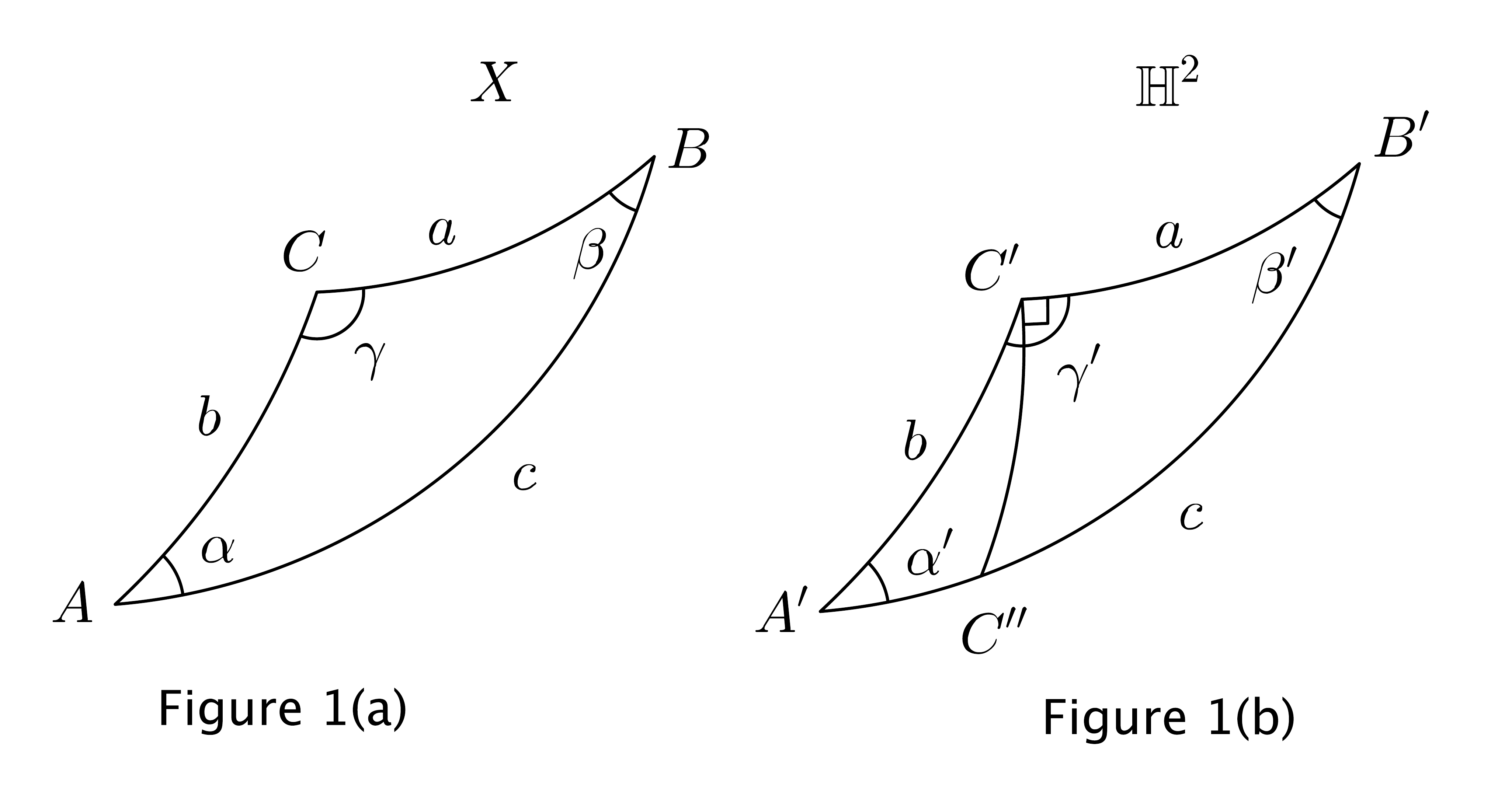} 
\caption{}
\end{figure}

\begin{corollary}
\label{lemma 2.5}
Let $[ABCD]$ denote a quadrilateral in $X$ such that $\angle ABC\geq \pi /2, \angle BCD \geq \pi /2$ and $ \angle CDA\geq \pi /2$  as in Figure 2(a). Then:
\begin{enumerate}
\item 
$\sinh (d(B, C)) \sinh (d(C, D))\leq 1.$

\item 
Suppose that  $\angle BAD\geq \alpha>0$. If $\cosh(d(A, B))\sin \alpha>1$, then $$\cosh(d(C, D))\geq \cosh(d(A, B))\sin \alpha>1.$$ 
\end{enumerate}
\end{corollary}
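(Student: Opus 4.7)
The plan is to pass to the comparison quadrilateral $[A'B'C'D'] \subset \H^2$ built via the diagonal $A'C'$ (using the comparison polygon construction defined earlier), so that $[A'B'C']$ and $[A'C'D']$ are the comparison triangles for $[ABC]$ and $[ACD]$ respectively, and then verify both inequalities in $\H^2$. By the angle comparison theorem (Proposition~\ref{lemma 2.3}) one has $\angle A'B'C' \geq \angle ABC \geq \pi/2$ and $\angle A'D'C' \geq \angle ADC \geq \pi/2$, while at $C'$ the triangle inequality for angles at the vertex $C \in X$ yields $\angle B'C'D' = \angle B'C'A' + \angle A'C'D' \geq \angle BCA + \angle ACD \geq \angle BCD \geq \pi/2$. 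Since by construction the two comparison triangles lie on opposite sides of $A'C'$, their union is embedded; Gauss--Bonnet in $\H^2$ then forces $\angle B'A'D' < \pi/2$, so the comparison polygon is moreover convex (justifying the earlier Remark). Side lengths are preserved and $\angle B'A'D' = \angle B'A'C' + \angle C'A'D' \geq \angle BAC + \angle CAD \geq \angle BAD \geq \alpha$, so both parts reduce to analogous inequalities in $\H^2$.

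For part (1), I would apply Corollary~\ref{lemma 2.2} to the two sub-triangles. Setting $\theta_1 := \angle B'C'A'$ and $\theta_2 := \angle A'C'D'$, the obtuse angles at $B'$ and $D'$ give $\cosh(d(B',C'))\sin\theta_1 \leq 1$ and $\cosh(d(C',D'))\sin\theta_2 \leq 1$, equivalently $\sinh(d(B',C')) \leq \cot\theta_1$ and $\sinh(d(C',D')) \leq \cot\theta_2$. Since $\theta_1 + \theta_2 = \angle B'C'D' \geq \pi/2$, the identity $\cos(\theta_1 + \theta_2) = \cos\theta_1\cos\theta_2 - \sin\theta_1\sin\theta_2 \leq 0$ gives $\cot\theta_1\cot\theta_2 \leq 1$, and multiplying the two previous bounds yields $\sinh(d(B',C'))\sinh(d(C',D')) \leq 1$.

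For part (2), I would introduce Fermi coordinates $(t, r)$ in $\H^2$ with respect to the geodesic through $C'$ and $D'$, placing $C' = (0, 0)$ and $D' = (c, 0)$ with $c = d(C',D')$, and writing $A' = (t_A, r_A)$ and $B' = (t_B, r_B)$ with $r_A, r_B > 0$ (by convexity $A'$ and $B'$ lie on the same side of the axis). A direct tangent-vector computation in the hyperboloid model shows that $\angle A'D'C' \geq \pi/2$ is equivalent to $t_A \geq c$ and $\angle B'C'D' \geq \pi/2$ is equivalent to $t_B \leq 0$, while $\angle A'B'C' \geq \pi/2$ becomes the relation $\cosh(d(A',C')) \geq \cosh(d(A',B'))\cosh(d(B',C'))$. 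Then the hyperbolic law of cosines at $A'$ in the triangle $[A'B'D']$ combined with the Fermi expressions for $\cosh(d(A',B'))$, $\cosh(d(A',D'))$, and $\cosh(d(B',D'))$ gives a closed-form formula for $\cos\phi$, from which the inequality $\cosh^2(c) \geq \cosh^2(d(A',B'))\sin^2\phi$ reduces to an algebraic identity to be verified on the region cut out by the three angle conditions. Equality is attained at the Lambert quadrilateral $(t_A, t_B) = (c, 0)$ with the Lambert relation $\tanh(r_B) = \tanh(r_A)/\cosh(c)$; for all other valid configurations the deficit is strictly positive. Since $\phi \geq \alpha$ with both angles in $(0, \pi/2)$, $\sin\phi \geq \sin\alpha$, so $\cosh(d(C',D')) \geq \cosh(d(A',B'))\sin\phi \geq \cosh(d(A',B'))\sin\alpha$.

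The main obstacle is the algebraic verification required for part (2). Part (1) reduces cleanly to a short trigonometric argument, but part (2) rests on an inequality between polynomials in $\sinh r_A$, $\sinh r_B$, $\sinh(t_A - c)$, $\sinh(-t_B)$ whose proof requires either careful polynomial expansion or an equivalent deformation-monotonicity argument anchored at the Lambert case where equality holds. The hypothesis $\cosh(d(A,B))\sin\alpha > 1$ plays no role in the derivation itself; it only ensures the conclusion is non-trivial, as $\cosh(d(C,D)) \geq 1$ is automatic.
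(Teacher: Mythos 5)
Your reduction to the comparison quadrilateral $[A'B'C'D']$ in $\H^2$ and the derivation of the angle bounds at $B'$, $C'$, $D'$, $A'$ coincide with the paper's setup. For part (1), however, you then diverge: the paper constructs auxiliary points $H\in A'B'$, $G$, $H'$ and invokes the Lambert-quadrilateral identity $\cos(\angle HH'D')=\sinh(d'(H,C'))\sinh(d'(C',D'))$, whereas you apply Corollary \ref{lemma 2.2} once in each of the two comparison triangles to get $\sinh(d(B',C'))\le \cot\theta_1$ and $\sinh(d(C',D'))\le \cot\theta_2$, and then use $\theta_1+\theta_2\ge \pi/2 \Rightarrow \cot\theta_1\cot\theta_2\le 1$. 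This is correct (both $\theta_1,\theta_2$ are strictly less than $\pi/2$ because each triangle already has an angle $\ge \pi/2$ at $B'$, resp.\ $D'$, so the cotangents are nonnegative and the two inequalities can be multiplied), and it is shorter than the paper's argument; in fact it can be run directly on the triangles $[ABC]$ and $[ACD]$ in $X$ without ever passing to $\H^2$.

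Part (2) is where you have a genuine gap. You set up Fermi coordinates, translate the three angle conditions into coordinate inequalities, and then state that the desired bound ``reduces to an algebraic identity to be verified on the region cut out by the three angle conditions,'' to be handled ``either by careful polynomial expansion or an equivalent deformation-monotonicity argument.'' That unverified inequality \emph{is} the content of part (2); nothing in your write-up establishes it, and you acknowledge as much. The paper closes this with a synthetic construction you should compare against: let $C''$ be the point where the ray from $B'$ perpendicular to $A'B'$ (on the side of the quadrilateral) exits, and use the hypothesis $\cosh(d(A,B))\sin\alpha>1$ together with Corollary \ref{lemma 2.2} to rule out that this ray exits through $A'D'$ --- so $C''\in C'D'$. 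The same hypothesis shows the lines through $B'C''$ and through $A'D'$ are ultraparallel, so they admit a common perpendicular $EF$, and $[A'B'EF]$ is a Lambert quadrilateral with $\cosh(d'(E,F))=\cosh(d'(A',B'))\sin(\angle B'A'F)$. Since $EF$ realizes the minimal distance between the two lines and $C''\in C'D'$, one gets $\cosh(d(C,D))\ge \cosh(d'(C'',D'))\ge\cosh(d'(E,F))\ge \cosh(d(A,B))\sin\alpha$ with no coordinate computation. Note also that your closing remark that the hypothesis $\cosh(d(A,B))\sin\alpha>1$ ``plays no role in the derivation'' is misleading: while the conclusion is vacuous when the hypothesis fails (as $\cosh(d(C,D))\ge 1$ always), the hypothesis is exactly what guarantees, in any proof along these lines, that the foot $C''$ lands on $C'D'$ and that the common perpendicular $EF$ exists.
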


\proof Let $[A'B'C'D']$ be a comparison quadrilateral for $[ABCD]$ in  $(\H^{2}, d')$ such that $[A'B'C']$ is a comparison triangle for $[ABC]$ and $[A'C'D']$ is a comparison triangle for $[ACD]$. By Proposition \ref{lemma 2.3}, $\angle A'B'C' \geq \pi /2$, $\angle A'D'C'\geq \pi /2$ and 
$$\angle B'C'D'=\angle B'C'A'+\angle A'C'D'\geq \angle BCD \geq \pi/2. $$
Thus, $0<\angle B'A'D'\leq \pi/2$ and $[A'B'C'D']$ is an embedded convex quadrilateral.

We first prove that $\sinh d(B, C) \sinh (d(C, D))\leq 1$. In Figure 2(c), take the point $H \in A'B'$ such that $\angle HC'D'=\pi /2$ and take the point $G\in A'H$ such that $\angle GD'C'=\pi /2$. We claim that $\angle C'HA'\geq \pi /2$. Observe that 
$$\angle C'HB'+\angle HB'C'+ \angle B'C'H \leq \pi$$
$$\angle C'HA'+ \angle C'HB'=\pi .$$
Thus $\angle C'HA'\geq \angle C'B'H\geq \pi /2$. We also have $d'(C',H)\geq d'(C', B')$ since $$\dfrac{ \sinh (d'(C', H))}{\sin (\angle C'B'H)}  = \dfrac{ \sinh (d'(C', B')) }{ \sin (\angle C'HB')} .  $$
Take the point $H'\in GD'$ such that $\angle C'HH'=\pi /2$. In the quadrilateral $[C'HH'D']$, $\cos (\angle HH'D')=\sinh (d'(H, C'))\sinh (d'(C', D'))$, \cite[Theorem 7.17.1]{AB}. Thus, we have

\begin{equation*}
\begin{split}
\sinh (d(C, D))\sinh (d(B, C)) & = \sinh (d'(C', D') \sinh (d'(B', C')) \\
                                              & \leq \sinh (d'(C', D'))\sinh (d'(C', H)) \\
                                              & \leq 1.
\end{split}
\end{equation*}

Next, we prove that if $\cosh(d(A, B))\sin \alpha>1$, then $\cosh(d(C, D))\geq \cosh(d(A, B))\sin \alpha$. In Figure 2(b), take the $C''\in C'D'$ such that $\angle A'B'C''= \pi /2$. Observe that  $C''$ cannot be on $ A'D'$. Otherwise in the right triangle $[A'B'C'']$, we have 
$$\cosh(d(A, B)) \sin \alpha \leq \cosh(d'(A', B')) \sin (\angle B'A'D')\leq 1,$$
which is a contradiction. Let $EF$ denote the geodesic segment which is orthogonal to $B'E$ and $A'F$. In the quadrilateral $[A'B'EF]$, $\cosh (d'(E, F))=\cosh (d'(A', B')) \sin (\angle B'A'F)$ by hyperbolic trigonometry \cite[Theorem 7.17.1]{AB}. Thus, 
$$\cosh(d(C, D))\geq \cosh (d'(C'', D'))\geq \cosh(d'(E, F))\geq \cosh(d(A, B))\sin \alpha. \qedhere$$

\begin{remark}
\label{boundary inequality 2}
If $A\in \partial_{\infty} X$ and $\angle BAD=0$, we  use quadrilaterals in $X$ to approximate the quadrilateral $[ABCD]$ and prove that $\sinh(d(B, C))\sinh(d(C, D))\leq 1$ by continuity. 
\end{remark}

\begin{figure}[H]
\centering
\includegraphics[width=1.2in]{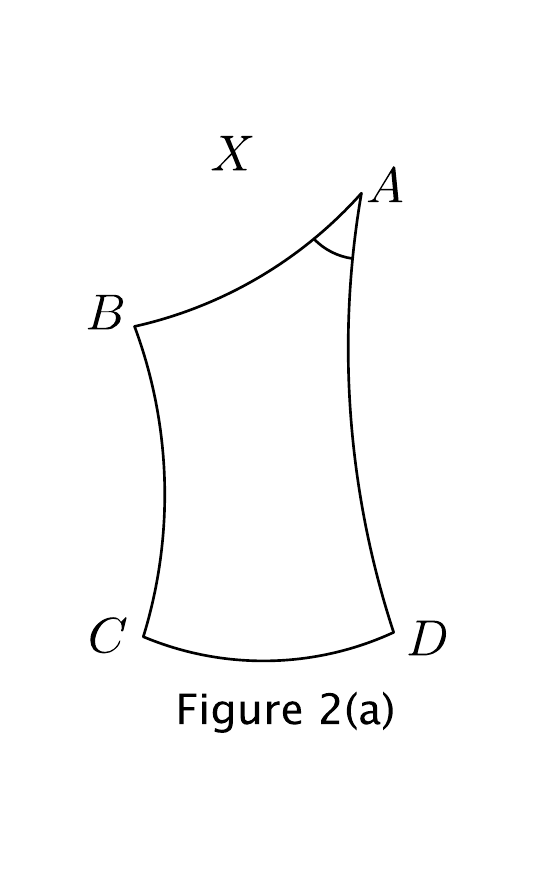}
\hspace{0.3in}
\includegraphics[width=1.2in]{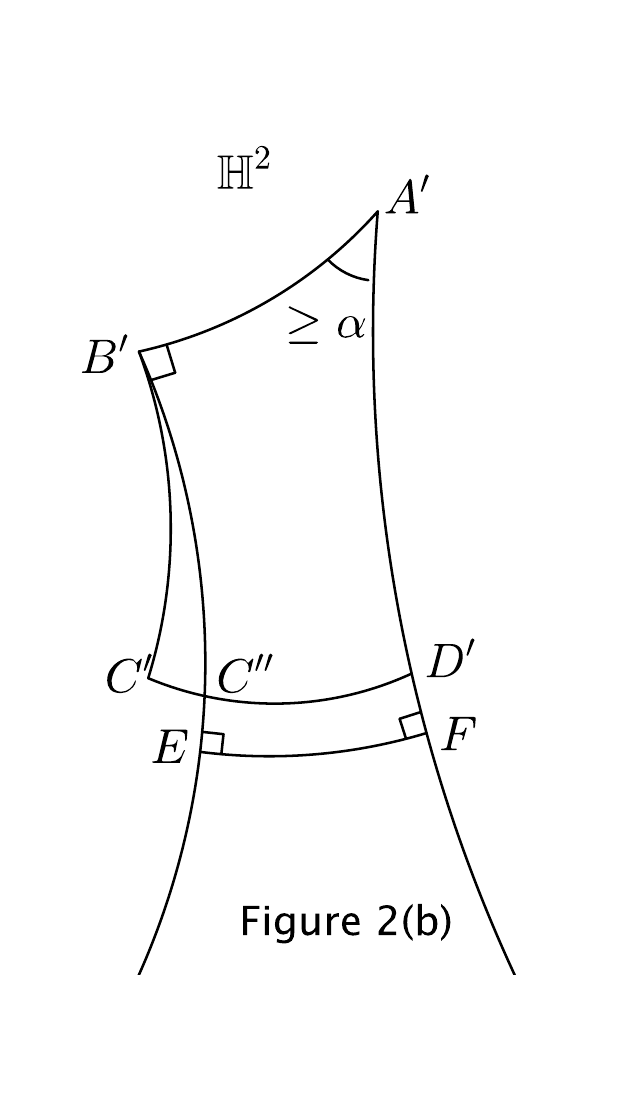} 
\hspace{0.3in}
\includegraphics[width=1.2in]{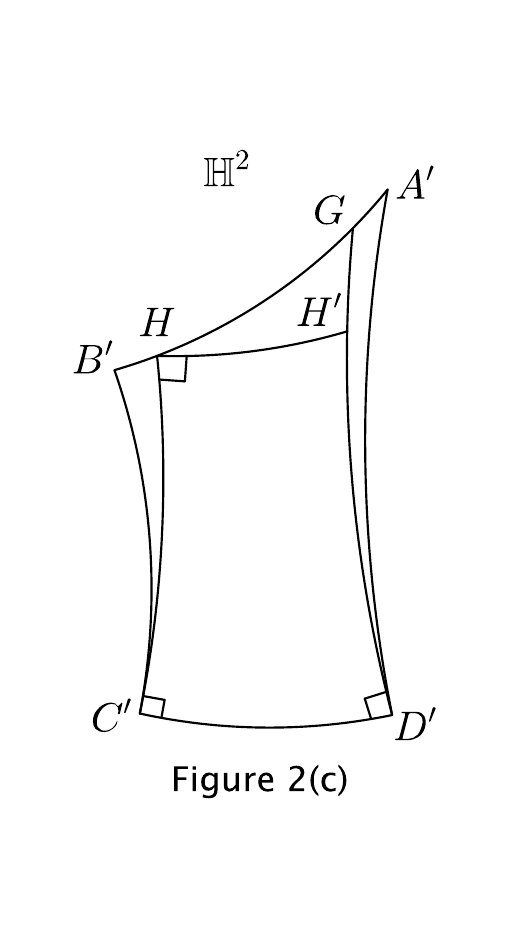}

\caption{}
\end{figure}

Another comparison theorem, {\em the CAT(-1) inequality}, can be used to derive the following proposition (see \cite{Bo2}):

\begin{proposition}\cite[Lemma 2.2.1]{Bo2}
\label{lemma 2.7}
For any $m+1$ points  $x_{0}, x_{1}, \cdots, x_{m}\in \bar{X}$  we have  
$$x_{0}x_{m} \subseteq \bar{N}_{\lambda}(x_{0}x_{1}\cup x_{1}x_{2}\cup \cdots \cup x_{m-1} x_{m})$$
where $\lambda=\lambda_{0}\lceil \log_{2} m \rceil, \lambda_{0}=\cosh^{-1}(\sqrt{2})$. \end{proposition}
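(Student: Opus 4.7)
The plan is to prove this by binary induction on $m$, using as the base case the ``slim triangle'' property for CAT($-1$) spaces, with the constant $\lambda_0=\cosh^{-1}\sqrt{2}$.

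\medskip
\noindent\textbf{Step 1: Slim triangles.} I would first establish the special case $m=2$: for any three points $y_0,y_1,y_2\in\bar X$, each side of the (possibly ideal) triangle $[y_0y_1y_2]$ lies in the $\lambda_0$-neighborhood of the union of the other two sides. This is the standard thinness estimate for CAT($-1$) triangles: compare with a triangle in $\H^2$ and observe that the inscribed radius of an ideal triangle in $\H^2$ equals $\cosh^{-1}\sqrt{2}$, which is the worst case. When some of the $y_i$ lie in $\geo X$, approximate them by interior points along the asymptotic geodesic rays and pass to the limit, in the spirit of Remarks \ref{boundary inequality 1} and \ref{boundary inequality 2}.

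\medskip
\noindent\textbf{Step 2: Binary induction.} Set $k=\lceil\log_2 m\rceil$, so that $m\le 2^k$. I claim that for every sequence of $m+1$ points $x_0,\ldots,x_m$ in $\bar X$,
\[
x_0x_m\subseteq \bar N_{\lambda_0 k}\!\left(\bigcup_{i=0}^{m-1}x_ix_{i+1}\right),
\]
and I prove this by induction on $k$. The case $k=0$ means $m\le 1$ and is trivial. For the inductive step with $2^{k-1}<m\le 2^k$, let $j=\lceil m/2\rceil$, so that $j\le 2^{k-1}$ and $m-j\le 2^{k-1}$. Apply Step 1 to the triangle $[x_0x_jx_m]$ to see that
\[
x_0x_m \subseteq \bar N_{\lambda_0}\bigl(x_0x_j\cup x_jx_m\bigr).
\]
By the inductive hypothesis applied to the subsequences $x_0,\ldots,x_j$ and $x_j,\ldots,x_m$, each of $x_0x_j$ and $x_jx_m$ lies in the $\lambda_0(k-1)$-neighborhood of the corresponding piecewise geodesic path. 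Combining via the triangle inequality for Hausdorff-type inclusions gives the bound $\lambda_0 + \lambda_0(k-1)=\lambda_0 k$, as required.

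\medskip
\noindent\textbf{Main obstacle.} The substantive content sits entirely in Step 1, namely verifying that the sharp ideal-triangle constant in $\H^2$ is $\lambda_0=\cosh^{-1}\sqrt{2}$ and that this constant transfers to $X$ through the CAT($-1$) comparison, including at ideal vertices. Once this is in hand, the rest is a clean dyadic recursion that produces the $\lceil\log_2 m\rceil$ factor automatically; no further geometric input about $X$ is needed.
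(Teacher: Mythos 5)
Your argument is correct. The paper itself offers no proof of this proposition---it is quoted directly from Bowditch \cite[Lemma 2.2.1]{Bo2}---but your dyadic-subdivision induction, splitting the chain at $j=\lceil m/2\rceil$ and using the $\cosh^{-1}(\sqrt{2})$-thinness of (possibly ideal) triangles obtained by CAT($-1$) comparison with $\H^{2}$, is exactly the standard argument and the one Bowditch gives, with the ideal-vertex cases handled by the same approximation device the paper uses in Remarks \ref{boundary inequality 1} and \ref{boundary inequality 2}.
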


\medskip 
Given a point $\xi \in \geo X$, for any point $y\in X$, we use a map $\rho_{y}: \mathbb{R}^{+}\rightarrow X$ to parametrize the geodesic $y\xi$ by its arc-length. The following lemma is  deduced from the $CAT(-1)$ inequality, see \cite{Bo2}:

\begin{lemma}\cite[Proposition 1.1.11]{Bo2}
\label{prop 1.1}
\begin{enumerate}
\item 
Given any $y, z \in X$, the function $d(\rho_{y}(t), \rho_{z}(t))$ is monotonically decreasing in $t$.

\item
For each $r$, there exists a constant $R=R(r)$, such that if $y, z\in X$ lie in the same horosphere about  $\xi$ and  $d(y, z)\leq r$, then $d(\rho_{y}(t), \rho_{z}(t))\leq Re^{-t}$ for all $t$. 
\end{enumerate}
\end{lemma}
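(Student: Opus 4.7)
My plan is to deduce both statements from the CAT(-1) comparison applied to the ideal geodesic triangle $[yz\xi]$ in $X$, whose finite side is $yz$ and whose two infinite sides are the asymptotic rays $\rho_y([0,\infty))$ and $\rho_z([0,\infty))$. Since all sectional curvatures of $X$ are $\le -1$, the space $X$ is CAT(-1), so one may compare $[yz\xi]$ to its ideal comparison triangle $[y'z'\xi']$ in $\mathbb{H}^2$, characterized by $d_{\mathbb{H}^2}(y',z')=d(y,z)$ and by the two infinite sides being asymptotic rays $\rho_{y'}, \rho_{z'}$ to $\xi'$. The ideal-triangle comparison --- obtained by applying the ordinary (finite) CAT(-1) inequality to a sequence of triangles $[y,z,\rho_z(s_n)]$ and letting $s_n\to\infty$ --- yields
$$
d\bigl(\rho_y(t),\rho_z(t)\bigr)\ \le\ d_{\mathbb{H}^2}\!\bigl(\rho_{y'}(t),\rho_{z'}(t)\bigr) \qquad\text{for all } t\ge 0.
$$
Convergence of the finite comparison triangles to the ideal one uses the Busemann-function characterization of asymptotic rays and standard convergence of geodesics in Hadamard manifolds.

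For part (1), I would re-base the rays at time $t_1$. Fix $t_1<t_2$ and apply the same ideal comparison to the triangle $[\rho_y(t_1)\,\rho_z(t_1)\,\xi]$ in $X$, obtaining
$$
d\bigl(\rho_y(t_2),\rho_z(t_2)\bigr)\ \le\ d_{\mathbb{H}^2}\!\bigl(\rho_{y''}(t_2-t_1),\rho_{z''}(t_2-t_1)\bigr),
$$
where $y'',z''\in\mathbb{H}^2$ satisfy $d_{\mathbb{H}^2}(y'',z'')=d(\rho_y(t_1),\rho_z(t_1))$ with $\rho_{y''},\rho_{z''}$ asymptotic in $\mathbb{H}^2$. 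A direct upper-half-plane computation shows that the distance between asymptotic rays in $\mathbb{H}^2$ is strictly decreasing to $0$, so the right-hand side is at most $d(\rho_y(t_1),\rho_z(t_1))$, giving monotonicity. (Alternatively, CAT(0) implies that $t\mapsto d(\rho_y(t),\rho_z(t))$ is convex on $[0,\infty)$, and it is bounded since the rays are asymptotic, hence non-increasing.)

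For part (2), assume $d(y,z)\le r$ and $y,z$ lie on a common horosphere about $\xi$. The Busemann-function characterization of horospheres, combined with the finite-triangle CAT(-1) inequalities used above, forces $y',z'\in\mathbb{H}^2$ to lie on a common horosphere about $\xi'$. Working in the upper half-plane model with $\xi'=\infty$ and that horosphere taken to be the line $\{y=1\}$, place $y'=(0,1)$ and $z'=(a,1)$ with $a=2\sinh\!\bigl(d(y,z)/2\bigr)\le 2\sinh(r/2)$. Then $\rho_{y'}(t)=(0,e^t)$, $\rho_{z'}(t)=(a,e^t)$, and a direct calculation gives
$$
d_{\mathbb{H}^2}\!\bigl(\rho_{y'}(t),\rho_{z'}(t)\bigr)\ =\ 2\sinh^{-1}\!\bigl(a e^{-t}/2\bigr)\ \le\ a e^{-t}\ \le\ 2\sinh(r/2)\,e^{-t}.
$$
Combining this with the ideal CAT(-1) comparison and setting $R(r):=2\sinh(r/2)$ yields $d(\rho_y(t),\rho_z(t))\le Re^{-t}$, as required.

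The main technical subtlety is the rigorous justification of the ideal-triangle CAT(-1) comparison and, for part (2), the transfer of the common-horosphere condition from $X$ to the comparison model $\mathbb{H}^2$. Both use the characterization of Busemann functions as limits of renormalized distance functions $h_\xi(x)=\lim_{s\to\infty}\bigl(d(x,\rho_z(s))-s\bigr)$; no single step is deep, but the bookkeeping with convergence of comparison triangles must be done carefully.
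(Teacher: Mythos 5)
Your proof is correct and follows exactly the route the paper indicates: the paper gives no proof of this lemma, citing it as Proposition 1.1.11 of Bowditch and remarking only that it ``is deduced from the $CAT(-1)$ inequality,'' which is precisely the ideal-comparison-triangle argument you carry out (including the Busemann-limit justification that the common-horosphere condition passes to the comparison configuration). One small overstatement: the distance between two asymptotic rays in $\mathbb{H}^2$ decreases to $0$ only when they are based on a common horosphere --- in upper half-plane coordinates one gets $\cosh d(s)=1+\bigl(a^{2}e^{-2s}+(b-c)^{2}\bigr)/(2bc)$, which is decreasing but tends to a positive limit when $b\neq c$ --- but since part (1) only needs monotonicity this is harmless, and your parenthetical convexity-plus-boundedness argument is the cleanest way to close that step.
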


\subsection{Convex and quasiconvex subsets}

\begin{definition}\label{def:qc}
A  subset $A\subseteq X$ is  \emph{convex} if $xy\subseteq A$ for all $x, y\in A$. A closed subset $A\subseteq X$ is $\lambda$-\emph{quasiconvex} if $xy\subseteq \bar{N}_{\lambda}(A)$ for all $x, y\in A$. Convex closed subsets are $0$-quasiconvex. 

\end{definition}

\begin{remark}
If $A$ is a $\lambda$-quasiconvex set, then $\textup{QHull}(A) \subseteq \bar{N}_{\lambda}(A)$. 
\end{remark}

\begin{proposition}\cite[Proposition 2.5.4]{Bo2}
\label{convex}
There is a function ${\mathfrak r}_{\kappa}: \R_+ \rightarrow \R_+$ (depending also on $\kappa$) such that for every $\lambda$-quasiconvex subset $A\subseteq X$, we have
$$
\textup{Hull}(A)\subseteq \bar{N}_{{\mathfrak r}_{\kappa}(\lambda)}(A).
$$
\end{proposition}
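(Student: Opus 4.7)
My plan is to produce a convex set sandwiched between $A$ and a bounded neighborhood of $A$: since $\Hull(A)$ is the intersection of all closed convex subsets of $X$ containing $A$, any such sandwich automatically yields the inclusion $\Hull(A)\subseteq\bar{N}_R(A)$. A natural candidate is $C=\bar{N}_R(A)$ itself, and I would aim to prove that this set is convex once $R=r_\kappa(\lambda)$ is taken sufficiently large.

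To test convexity of $\bar{N}_R(A)$, fix $x,y\in\bar{N}_R(A)$, a point $p$ on the geodesic $xy$, and nearest-point projections $x',y'\in A$ of $x$ and $y$. Then $d(x,x'),d(y,y')\le R$, and by $\lambda$-quasiconvexity $x'y'\subseteq\bar{N}_\lambda(A)$. The task reduces to bounding $d(p,x'y')$ in the geodesic quadrilateral $[xx'y'y]$. The na\"ive bound from the CAT($0$) convexity of the distance function along geodesics gives only $d(p,x'y')\le\max(d(x,x'),d(y,y'))\le R$, hence $d(p,A)\le R+\lambda$; iterated over successive geodesic hulls, this blows up arithmetically. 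The improvement I propose comes from the CAT($-1$) inequality, valid here since all sectional curvatures are $\le -1$, which gives \emph{exponential} rather than linear control on transverse distances. Concretely, I would drop a perpendicular from $p$ to $x'y'$ to split $[xx'y'y]$ into two right-angled subquadrilaterals, then apply the $\sinh\cdot\sinh\le 1$ bound of Corollary \ref{lemma 2.5}(1) in each piece, combined with the $\cosh$-inequality of Corollary \ref{lemma 2.5}(2) to propagate the estimate across. The resulting self-improving bound stabilizes at a finite radius $R=r_\kappa(\lambda)$.

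The principal obstacle is that $A$ is only quasi-convex, not convex, so nearest-point projections $x\mapsto x'$ do not enjoy the standard right-angle property valid for projections to convex sets: the angles at $x'$ and $y'$ in $[xx'y'y]$ are only approximately right, with deviations controlled by $\lambda$. The angle-comparison Proposition \ref{lemma 2.3} is the tool that lets one translate this approximate data into exact data in an $\mathbb{H}^2$ comparison quadrilateral, where Corollary \ref{lemma 2.5} applies directly; one then transfers the resulting bound back to $X$. The pinching hypothesis $-\kappa^2\le$ sectional curvatures is essential here: in Hadamard manifolds with only an upper curvature bound, the convex hull of a finite set can be far larger than its quasi-convex hull, and it is precisely the lower curvature bound that rules this out, producing the dependence of $r_\kappa$ on $\kappa$ in the final constant.
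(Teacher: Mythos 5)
There is a genuine gap, and it is fatal to the proposed strategy: the candidate convex set $\bar{N}_R(A)$ is in general \emph{not} convex, for any value of $R$, so the sandwich $A\subseteq C\subseteq \bar{N}_R(A)$ cannot be realized by taking $C$ to be a metric neighborhood of $A$. This already fails in $\H^2$ (curvature $\equiv -1$, hence pinched for every $\kappa\geq 1$). Take $A=ox_{1}\cup ox_{2}$, two geodesic segments of length $L$ meeting at $o$ at an angle $\pi-\epsilon$; by thinness of the triangle $[ox_1x_2]$ this set is $\delta$-quasiconvex with $\delta$ independent of $\epsilon$ and $L$. Writing $d_i=d(\cdot,ox_i)$, one has $\bar{N}_R(A)=\{d_1\leq R\}\cup\{d_2\leq R\}$. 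For $L$ large relative to $R$ there is a point $P$ on the inner side of the bend with $d_1(P)=d_2(P)=R$ whose nearest points on the two segments are distinct interior points; hence $\nabla d_1(P)\neq\nabla d_2(P)$ and the two equidistant hypersurfaces cross transversally at $P$, both gradients pointing away from $A$. The complement $\{d_1>R\}\cap\{d_2>R\}$ then has a corner of angle $<\pi$ at $P$, i.e.\ $\bar{N}_R(A)$ has a reflex corner there: a short second-order computation produces $u\in\{d_1\leq R\}$ and $v\in\{d_2\leq R\}$ near $P$ whose geodesic midpoint satisfies $d_1>R$ and $d_2>R$. Since $L$ is unconstrained by the quasiconvexity constant, no choice of ${\mathfrak r}_{\kappa}(\delta)$ repairs this. (The same phenomenon occurs for two geodesics crossing at a point.)

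Your fallback --- that a ``self-improving bound stabilizes at a finite radius'' --- is asserted but not substantiated, and your own diagnosis explains why it is delicate: each application of the join operation can lose an additive constant of order $\delta$ near the corners of the comparison quadrilaterals, the CAT($-1$) exponential contraction only helps in the middle of such quadrilaterals, and $\Hull(A)$ is obtained only after infinitely many iterations of the join, so non-summable additive losses are exactly what must be excluded. More structurally: every tool you invoke (Proposition \ref{lemma 2.3}, Corollary \ref{lemma 2.5}, the CAT($-1$) inequality) is a consequence of the upper curvature bound alone, so if your outline closed up it would prove the statement for all Hadamard manifolds of curvature $\leq -1$ --- which, as you yourself note, is false. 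The lower bound $\geq -\kappa^2$ must therefore enter through some mechanism absent from your sketch; note also that the paper does not prove this proposition but cites Bowditch \cite[Proposition 2.5.4]{Bo2}, where the pinching is used in an essential way to build convex barriers separating far points from a quasiconvex set, rather than to verify convexity of a tubular neighborhood.
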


\begin{remark}
\label{uniformly constant for convex}
Note that, by the definition of the hyperbolicity constant $\delta$ of $X$, the quasiconvex hull 
$\textup{QHull}(A)$ is $2\delta$-quasiconvex for every closed subset $A\subseteq \bar{X}$. Thus, $\Hull(A)\subseteq \bar{N}_{r}(\textup{QHull}(A))$ for some absolute constant 
$r\in [0, \infty)$. 
\end{remark}

\begin{remark}
For any closed subset $A\subseteq \partial_{\infty}X$ with more than one point, $\partial_{\infty} \textup{Hull}(A)=A$. 

\end{remark}

\begin{lemma}
\label{hyper}
Assume that $\xi, \eta$ are  distinct points in $\geo X$ and  $( x_{i})$, $(y_i)$ are sequences in $X$  converging to $\xi$ and to $\eta$ respectively. 
Then for every  point $p\in \xi \eta\subseteq X$, $p\in \bar{N}_{2\delta}(x_{i}y_{i})$ for all sufficiently large $i$. 
\end{lemma}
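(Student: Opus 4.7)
The plan is to combine a thin-quadrilateral argument with the observation that the rays $x_i\xi$ and $y_i\eta$ eventually escape every bounded neighborhood of the fixed point $p\in\xi\eta$.

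First I would consider the geodesic ``quadrilateral'' with two ideal vertices $[x_i y_i \eta \xi]$, whose four sides are the segment $x_iy_i$, the rays $y_i\eta$ and $\xi x_i$, and the bi-infinite geodesic $\xi\eta$. Subdividing it by the geodesic ray from $x_i$ to $\eta$ yields two geodesic triangles: $[x_i y_i \eta]$ with a single ideal vertex $\eta$, and $[x_i \eta \xi]$ with two ideal vertices. Since ideal triangles in a CAT$(-1)$ space are $\delta$-thin (obtained as limits of $\delta$-thin finite triangles, with $\delta\le \cosh^{-1}(\sqrt 2)$), every point of $\xi\eta$ lies within $\delta$ of $x_i\eta\cup x_i\xi$, and every point of $x_i\eta$ lies within $\delta$ of $x_iy_i\cup y_i\eta$. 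Chaining these two inequalities, the given $p\in\xi\eta$ is within $2\delta$ of at least one of $x_iy_i$, $y_i\eta$, $x_i\xi$.

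The second step is to rule out the last two alternatives for large $i$. Let $h_\xi$ denote a Busemann function about $\xi$; it is $1$-Lipschitz and, along any ray asymptotic to $\xi$, it decreases at unit speed. Recall that in the cone topology of $\bar X$ the horoballs $\{h_\xi\le -N\}$ form a neighborhood basis at $\xi$, so the hypothesis $x_i\to\xi$ forces $h_\xi(x_i)\to -\infty$. For any $q\in x_i\xi$ we then have $h_\xi(q)\le h_\xi(x_i)$, while $q\in \bar B(p,2\delta)$ would force $h_\xi(q)\ge h_\xi(p)-2\delta$. Once $i$ is large enough that $h_\xi(x_i)<h_\xi(p)-2\delta$, these inequalities are incompatible, so $x_i\xi\cap\bar B(p,2\delta)=\emptyset$. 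The symmetric argument with a Busemann function about $\eta$ shows $y_i\eta\cap\bar B(p,2\delta)=\emptyset$ eventually. Combining with the first step gives $p\in\bar N_{2\delta}(x_iy_i)$ for all sufficiently large $i$.

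The only subtlety I anticipate is the appeal to $\delta$-thinness for triangles with ideal vertices; this is a standard approximation argument, approximating an ideal vertex by a sequence of finite points converging to it and passing to the limit in the thin-triangle inequality, so it is not a real obstacle.
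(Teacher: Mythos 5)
Your first step reproduces the paper's argument: split the ideal quadrilateral $[x_i y_i \eta\, \xi]$ along $x_i\eta$ into two $\delta$-thin triangles and conclude $p\in \bar{N}_{2\delta}(x_iy_i\cup x_i\xi\cup y_i\eta)$. The paper then finishes by asserting that $d(p,x_i\xi)\to\infty$ and $d(p,y_i\eta)\to\infty$; your second step is an attempt to justify the analogous fact, and it contains a genuine error. The horoballs $\{h_\xi\le -N\}$ do \emph{not} form a neighborhood basis at $\xi$ in the cone topology, and consequently $x_i\to\xi$ does \emph{not} force $h_\xi(x_i)\to-\infty$. In the upper half-plane model of $\H^{2}$ with $\xi=\infty$, the sequence $x_n=n+i$ converges to $\infty$ in $\bar{X}$ while $h_\infty(x_n)=-\log 1=0$ for every $n$. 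The failure is not an artifact of a contrived example: in the paper's application (Lemma \ref{lemma 3.8}) the $x_i$ are orbit points $f_i(\tilde x)\to\xi$, and if $\xi$ is the fixed point of a parabolic $g\in\Gamma$ the orbit $g^n o$ converges to $\xi$ while staying on a single horosphere about $\xi$, so $h_\xi(g^n o)$ is constant. What is true is the reverse containment: the closure in $\bar X$ of a horoball about $\xi$ meets $\geo X$ only at $\xi$, so sequences going to infinity \emph{inside} a horoball converge to $\xi$; cone neighborhoods of $\xi$ are not contained in horoballs.

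The fact you need --- that $x_i\xi$ and $y_i\eta$ eventually miss $\bar{B}(p,2\delta)$ --- is nevertheless true, so the overall strategy is sound once this step is repaired. One correct argument: suppose $q_i\in x_i\xi$ with $d(p,q_i)\le 2\delta$ for infinitely many $i$. Then $d(x_i,q_i)\to\infty$, since otherwise $d(p,x_i)$ would stay bounded, contradicting $x_i\to\xi\in\geo X$. Passing to a subsequence, $q_i\to q$ with $d(p,q)\le 2\delta$, the segments $q_ix_i$ converge to a ray from $q$ asymptotic to $\lim_i x_i=\xi$, and the rays $q_i\xi$ converge to the ray $q\xi$; their union is a locally uniform limit of the geodesics $x_i\xi$, hence a biinfinite geodesic both of whose ideal endpoints equal $\xi$, which is impossible in a Hadamard manifold. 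This gives $d(p,x_i\xi)\to\infty$ (and symmetrically $d(p,y_i\eta)\to\infty$), completing the proof along the lines the paper intends.
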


\begin{proof}
Since $( x_{i} )$ converges to $\xi$ and $( y_{i} )$ converges to $\eta$, we have  
$d(p, x_{i} \xi)\rightarrow \infty$ and $d(p, y_{i}\eta)\rightarrow \infty$ as $i \rightarrow \infty$. By the $\delta$-hyperbolicity of $X$, 
$$p\in \bar{N}_{2\delta}(x_{i}y_{i} \cup x_{i} \xi \cup y_{i} \eta).$$
Since  $d(p, x_{i} \xi)\rightarrow \infty$ and  $d(p, y_{i} \eta)\rightarrow \infty$, we obtain  
$$p\in \bar{N}_{2\delta}(x_{i} y_{i})$$ 
for sufficiently large $i$. 
\end{proof}

\begin{remark}
This lemma holds for any $\delta$-hyperbolic geodesic metric space. 
\end{remark}

\subsection{Volume inequalities} 

Let $V(r, n)$ denote the volume of the  $r$-ball in $\H^{n}$. Then, for a positive constant $c_n$ depending only on $n$, 
$$
V(r,n)= c_n \int_0^r \sinh^{n-1}(t)dt \le \frac{c_n}{2^{n-1}(n-1)} e^{(n-1)r} = C_n   e^{(n-1)r},  
$$
see e.g. \cite[Sect. 1.5]{Nicholls}.

Volumes of metric balls $B(x, r)\subset X$ satisfy the inequalities 
\begin{equation}\label{eq:volume-inequalities} 
V(r, n)\le Vol B(x,r) \le V(\kappa r, n)/\kappa^{n},   
\end{equation}
see e.g. Proposition 1.1.12 and Proposition 1.2.4 in \cite{Bo2}, or \cite[Sect. 11.10]{BiCr}. As a corollary of these volume inequalities we obtain the following {\em packing inequality}:

\begin{lemma}\label{lem:packing}
Suppose that $Z\subset X$ is a subset  such that the minimal distance between distinct points of $Z$ is at least $2r$. Then for every $x\in X$, $R\ge 0$, we have 
$$
\card (B(x,R)\cap Z)\le \frac{V(\kappa(R+r), n)}{\kappa^{n} V(r, n)}\leq   \frac{C_n }{\kappa^{n} V(r, n)}  e^{\kappa(n-1)(R+r)}.   
$$
In particular, if 
$$\card (Z)>  \frac{C_n}{V(r, n)}  e^{\kappa(n-1)(R+r)}$$ then for any $z\in Z$ there exists $z'\in Z$ such that $d(z,z')> R$. 
\end{lemma}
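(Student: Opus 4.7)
The proof is a standard volume-packing argument, and the main work is assembling the curvature bounds (\ref{eq:volume-inequalities}) correctly.

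First I would observe that, by the assumption on $Z$, the open balls $\{B(z,r)\}_{z\in Z}$ are pairwise disjoint, since any two centers are at distance $\geq 2r$. Next, for each $z\in Z\cap B(x,R)$, the triangle inequality gives $B(z,r)\subset B(x,R+r)$. Consequently,
\begin{equation*}
\sum_{z\in Z\cap B(x,R)} \mathrm{Vol}(B(z,r)) \;\leq\; \mathrm{Vol}(B(x,R+r)).
\end{equation*}

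Now I would apply the volume inequalities (\ref{eq:volume-inequalities}): each summand on the left is bounded below by $V(r,n)$ (the Riemannian volume of an $r$-ball in $\H^n$), while the right side is bounded above by $V(\kappa(R+r),n)/\kappa^{n}$. Dividing yields
\begin{equation*}
\card(B(x,R)\cap Z)\;\leq\;\frac{V(\kappa(R+r),n)}{\kappa^{n}V(r,n)},
\end{equation*}
which is the first claimed inequality. The second follows by substituting the explicit bound $V(s,n)\leq C_n e^{(n-1)s}$ recalled just before the lemma, applied with $s=\kappa(R+r)$.

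For the final "in particular" statement, I would argue by contradiction. Suppose some $z_0\in Z$ satisfies $d(z_0,z')\leq R$ for every $z'\in Z$; then $Z\subset \bar B(z_0,R)$. Applying the packing bound just established at $x=z_0$ (with $R$ replaced by $R+\epsilon$ and letting $\epsilon\to 0$, or simply noting that the argument works verbatim for closed balls of the same radius), and using $\kappa\geq 1$ so that $1/\kappa^{n}\leq 1$, we would get
\begin{equation*}
\card(Z)\;\leq\;\frac{C_n}{\kappa^{n} V(r,n)}\, e^{\kappa(n-1)(R+r)}\;\leq\;\frac{C_n}{V(r,n)}\, e^{\kappa(n-1)(R+r)},
\end{equation*}
contradicting the hypothesis. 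The only mild subtlety is the open-versus-closed ball issue in the last step, which is handled either by the $\epsilon$-perturbation above or by observing that the volume comparison in (\ref{eq:volume-inequalities}) applies equally to closed balls; there is no serious obstacle here.
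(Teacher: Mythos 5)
Your proof is correct and is exactly the standard packing argument the paper has in mind: the lemma is stated there without proof, merely as a corollary of the volume inequalities \eqref{eq:volume-inequalities}, and your disjoint-balls/volume-comparison argument (including the observation that $\kappa\ge 1$ so $1/\kappa^n\le 1$, and the open-versus-closed ball remark for the ``in particular'' clause) fills in precisely the intended details. No gaps.
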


\section{Escaping sequences of closed geodesics in negatively curved manifolds}
\label{sec:escaping}

In this section, $X$ is a Hadamard manifold of negative curvature $\le -1$ with the hyperbolicity constant $\delta$, $\Gamma< \Isom(X)$ is a discrete isometry subgroup  and $M=X/\Gamma$ is the quotient orbifold. A sequence of subsets $A_i\subset M$ 
 is said to {\em escape every compact subset of $M$} if 
for every compact $K\subset M$, the subset 
$$
\{i\in \N: A_i\cap K\ne \emptyset\}
$$
is finite. Equivalently, for every $x\in M$, $d(x, A_i)\to \infty$ as $i\to \infty$.

\begin{lemma}
\label{escape compact set}
Suppose that $(a_i)$ is a sequence of closed geodesics in $M=X/ \Gamma$ which escapes every compact subset of $M$ and $x\in M$. Then, after 
passing to a subsequence in $(a_i)$, there exist geodesic arcs $b_i$ connecting $a_i, a_{i+1}$ and orthogonal to these geodesics, such that the sequence $(b_i)$ also 
escapes every compact subset of $M$. 
\end{lemma}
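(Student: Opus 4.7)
The plan is to take $b_i$ to be a shortest geodesic arc in $M$ from $a_i$ to $a_{i+1}$ --- which is automatically orthogonal to both closed geodesics at its endpoints by the first variation formula --- and to show that, after extracting a single subsequence, these arcs escape every compact subset of $M$.

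First, I would fix a lift $\tilde x\in X$ of $x$ and, for each $i$, choose a lift $\tilde a_i$ of $a_i$ realizing $d(\tilde x,\tilde a_i)=d_M(x,a_i)$; set $\tilde y_i:=\textup{Proj}_{\tilde a_i}(\tilde x)$. Since $d_M(x,a_i)\to\infty$, one has $d(\tilde x,\tilde y_i)\to\infty$, so by compactness of the visual compactification $\bar X$ I would pass to a subsequence in which $\tilde y_i\to\xi$ for some $\xi\in\partial_\infty X$.

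The key estimate to establish is
$$
L_i:=\textup{length}(b_i)\;\le\;d(\tilde a_i,\tilde a_{i+1})\;\le\;d(\tilde y_i,\tilde y_{i+1})\;\le\;|d(x,a_i)-d(x,a_{i+1})|+C,
$$
for a constant $C$ depending only on the hyperbolicity constant $\delta$ of $X$. The first two inequalities are immediate (projection to $M$ does not increase distances, and $\tilde y_i\in\tilde a_i$, $\tilde y_{i+1}\in\tilde a_{i+1}$); for the last I would expand $d(\tilde y_i,\tilde y_{i+1})=d(\tilde x,\tilde y_i)+d(\tilde x,\tilde y_{i+1})-2(\tilde y_i|\tilde y_{i+1})_{\tilde x}$ and apply the standard $\delta$-hyperbolic fact that, as $\tilde y_i,\tilde y_{i+1}$ converge to the common boundary point $\xi$, the Gromov product satisfies $(\tilde y_i|\tilde y_{i+1})_{\tilde x}\ge\min\bigl(d(\tilde x,\tilde y_i),d(\tilde x,\tilde y_{i+1})\bigr)-C/2$ for all sufficiently large $i$.

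To conclude, for any $z\in b_i$ one has $d_M(z,a_i)\le L_i$ and $d_M(z,a_{i+1})\le L_i$, so $d_M(x,z)\ge\max\bigl(d_M(x,a_i),d_M(x,a_{i+1})\bigr)-L_i$. Combining this with the length estimate yields
$$
d_M(x,b_i)\;\ge\;\min\bigl(d(x,a_i),d(x,a_{i+1})\bigr)-C\;\longrightarrow\;\infty,
$$
and since every compact subset of $M$ is contained in a metric ball centered at $x$, this shows $(b_i)$ escapes every compact subset of $M$. The main obstacle will be the $\delta$-hyperbolic length estimate in the first display, which rests on the standard fact that Gromov products relative to a fixed basepoint tend to infinity whenever both of their arguments converge to the same boundary point.
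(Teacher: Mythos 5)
Your argument takes a genuinely different route from the paper's, but it rests on a false ``standard fact,'' and this breaks the key estimate. The claim that $(\tilde y_i\mid\tilde y_{i+1})_{\tilde x}\ge\min\bigl(d(\tilde x,\tilde y_i),d(\tilde x,\tilde y_{i+1})\bigr)-C/2$ once the $\tilde y_i$ converge to a common ideal point $\xi$ is not true: convergence of two sequences to the same point of $\partial_\infty X$ is \emph{equivalent} to the Gromov product tending to infinity, but it gives no comparison of the Gromov product with $\min\bigl(d(\tilde x,\tilde y_i),d(\tilde x,\tilde y_{i+1})\bigr)$. For a concrete counterexample in $\mathbb{H}^2$, let $\rho$ be a ray from $\tilde x$ with $\rho(\infty)=\xi$, let $y_i=\rho(i)$, and let $z_i$ be the point with $d(\tilde x,z_i)=i$ whose direction at $\tilde x$ makes angle $i^{-1/2}$ with $\rho$. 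Both sequences converge to $\xi$, yet the hyperbolic law of cosines gives $d(y_i,z_i)\approx 2i-\log i\to\infty$ while $|d(\tilde x,y_i)-d(\tilde x,z_i)|=0$; equivalently $(y_i\mid z_i)_{\tilde x}\approx\frac12\log i$, which is nowhere near $i-C/2$. Interleaving such configurations produces a single sequence converging to $\xi$ for which your inequality $d(\tilde y_i,\tilde y_{i+1})\le|d(x,a_i)-d(x,a_{i+1})|+C$ fails along every subsequence chosen only for convergence to $\xi$. So the chain of inequalities bounding $L_i$ collapses at its last link, and with it the conclusion $d_M(x,b_i)\to\infty$.

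There is also a structural reason the globally shortest arc from $a_i$ to $a_{i+1}$ is the wrong object: even when all the geodesics of a subsequence lie deep in a single end, the minimizing arc between two of them can return through the compact core (for instance when the end splits into two lobes that are joined only near the core), so no length bound of the proposed form can hold for it in general. This is precisely the difficulty the paper's proof is organized around: it takes $b_{ij}$ to be the length minimizer not among all arcs but within the relative homotopy class of $c_i\ast x_ix_j\ast c_j^{-1}$, where $c_i$ is a shortest arc from $a_i$ to the neighborhood $\bar N_{7\delta}(K)$ of a given compact set $K$; by $\delta$-hyperbolicity that minimizer stays in $\bar N_{7\delta}(a_i\cup c_i\cup c_j\cup a_j)$, hence off $K$, and a nested diagonal extraction over a compact exhaustion then produces one subsequence that works for all $K$ simultaneously. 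To repair your proof you would need some comparable mechanism for controlling the homotopy class of $b_i$; the Gromov-product estimate cannot be salvaged as stated.
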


\begin{proof}

Consider a sequence of compact subsets $K_n:= \bar{B}(x, 7\delta n)$ exhausting $M$. Without loss of generality, we may assume that $a_i\cap K_n=\emptyset$ for all $i\ge n$. 

We first prove the following claim: 

\begin{claim*}
For each compact subset $K\subset M$ and for each infinite subsequence $(a_i)_{i\in I}, I\subset \N$, there exists a further infinite subsequence, 
$(a_i)_{i\in J}, J\subset I$, such that for each pair of distinct elements $i, j\in J$, there exists a geodesic arc $b_{ij}$ connecting 
$a_i$ to $a_j$ and orthogonal to both, which is disjoint from $K$.  
\end{claim*}
\proof  Given two closed geodesics $a, a'$ in $M$, we consider the set $\pi_{1}(M, a, a')$ of {\em relative homotopy classes} of paths in $M$ 
connecting $a$ and $a'$, where the relative homotopy is defined through paths connecting $a$ to $a'$. 

In each class $[b']\in \pi_{1}(M, a, a')$, there exists a continuous path $b$ which is the length minimizer in the class. 
By minimality of its length, $b$ is a geodesic arc orthogonal to $a$ and $a'$ at its end-points. 

For each compact subset $K\subset M$, there exists $m\in \N$ such that for all $i\in I_m:=I\cap [m,\infty)$, $a_i\cap K'=\emptyset$ where $K'=\bar{N}_{7\delta}(K)$.
For $i\in I_m$ let $c_{i}$ denote a shortest arc between $a_{i}$ and $K'$; this geodesic arc terminates a point $x_i\in K'$. 
By compactness of $K'$, the sequence $(x_i)_{i\in I_m}$ contains a convergent subsequence,  $(x_i)_{i\in J}, J\subset I_m$ and, without loss of generality, we may assume 
that for all $i, j\in J$, $d(x_i, x_j)\le \delta$. Let $x_i x_j$ denote a (not necessarily unique) geodesic in $M$ of  length $\le \delta$ connecting $x_i$ to $x_j$.  
 For each pair of indices $i, j\in J$, consider the concatenation 
 $$
 b'_{ij}=c_{i}\ast x_{i}x_{j} \ast c_{j}^{-1},$$
 which defines a class $[b'_{ij}]\in \pi_{1}(M, a_{i}, a_{j})$.  
Let $b_{ij}\in [b'_{ij}]$ be a length-minimizing geodesic arc in this relative homotopy class. 
Then $b_{ij}$ is orthogonal to $a_{i}$ and $a_{j}$. By the $\delta$-hyperbolicity of $X$, 
$$
b_{ij}\subseteq \bar{N}_{7\delta}(a_{i}\cup c_{i} \cup c_{j}\cup a_{j}).$$
Hence,  $b_{ij}\cap K=\emptyset$ for any pair of distinct indices $i, j\in J$. This proves the claim. \qed

\medskip 
We now prove the lemma. Assume inductively (by induction on $N$) that we have constructed an infinite subset $S_N\subset \N$ such that:

For the $N$-th element $i_N\in S_N$, for each $j > i_N, j\in S_{N}$,  there exists a geodesic arc $b_{j}$ in $M$ 
connecting $a_{i_N}$ to $a_{j}$ and orthogonal to both, which is disjoint from $K_{N-1}$. 

Using the claim, we find an infinite subset $S_{N+1}\subset S_N$ which contains the first $N$ elements of $S_N$, such that for all $s, t> i_{N}, s, t\in S_{N+1}$, there exists a geodesic $b_{s,t}$ in $M$ connecting $a_{s}$ to $a_{t}$, orthogonal to both and disjoint from $K_N$. 

The intersection 
$$
S:= \bigcap_{N\in \N} S_N
$$
equals $\{i_N: N\in \N\}$ and, hence, is infinite. 
We, therefore, obtain a subsequence $(a_i)_{i\in S}$ such that for all $i, j\in S, i < j$, 
there exists a geodesic $b_{ij}$ in $M$ connecting $a_i$ to $a_j$ and orthogonal to both, which is disjoint from $K_{i-1}$.  
\end{proof}

\begin{remark}
It is  important to pass a subsequence of $(a_{i})$, otherwise, the lemma is false. A counter-example is given by a geometrically infinite manifold 
with two distinct ends $E_{1}$ and $E_{2}$ where we have a sequence of closed geodesics $a_{i}$ (escaping every compact subset of $M$) contained in 
$E_{1}$ for odd $i$ and in $E_{2}$ for even $i$. Then $b_{i}$ will always intersect a compact subset separating the two ends no matter what $b_{i}$ we take. 
\end{remark}

\section{Elementary groups of isometries}
\label{sec:elementary}

Every isometry $g$ of $X$ extends  to a homeomorphism (still denoted by $g$) of $\bar{X}$. We let  $\textup{Fix}(g)$ denote the fixed point set of $g: \bar X\to \bar X$. For a subgroup 
$\Gamma< \Isom(X)$, we use the notation
$$
\textup{Fix}(\Gamma):= \bigcap_{g\in \Gamma} \textup{Fix}(g),
$$
to denote the fixed point set of $\Gamma$ in $\bar X$. Typically, this set is empty.

\medskip 
Isometries of $X$ are classified as follows: 


\begin{enumerate}

\item $g$ is {\em parabolic} if  $\textup{Fix}(g)$  is a singleton $\{p\}\subset \geo X$. In this case, $g$ 
 preserves (setwise) every horosphere centered at 
 $p$. 

\item $g$ is loxodromic if  $\textup{Fix}(g)$ consists of two distinct points $p, q \in \geo X$. The loxodromic isometry $g$ preserves the geodesic $pq\subset X$ and acts on it as a nontrivial  translation. The geodesic $pq$ is called the {\em axis} $A_g$ of $g$.  

\item $g$ is elliptic if 
it fixes a point in $X$.  
The fixed point set of an elliptic isometry is a totally-geodesic subspace of $X$ invariant under $g$. In particular, the identity map is an elliptic isometry of $X$. 


\end{enumerate}

If $g\in \Isom(X)$ is such that  $\textup{Fix}(g)$ contains three distinct points $\xi, \eta, \zeta\in \geo X$, then $g$ also fixes pointwise the convex hull $\Hull(\{\xi, \eta, \zeta\})$ and, hence, $g$ is an elliptic isometry of $X$. 

For each isometry $g\in \Isom(X)$ we define its translation length $l(g)$ as follows:
$$l(g)=\inf\limits_{x\in X} d(x, g(x)), $$
and we define the \emph{rotation} of $g$ at $x\in X$ as:
$$r_{g}(x)=\max \limits_{v\in T_{x}X} \angle (v, P_{g(x), x}\circ g_{\ast_{x}} v). $$
Here $g_{\ast_{x}}: T_{x}X\rightarrow T_{g(x)}X$ is the differential and $P_{g(x), x}: T_{g(x)}X\rightarrow T_{x}X$ is the parallel transport along the unique geodesic from $g(x)$ to $x$. Following \cite{BaGS}, given $a\geq 8$ we define the \emph{norm of} $g$ at $x$ as $n_{g}(x)=\max (r_{g}(x), a\cdot d_{g}(x))$ where $d_{g}(x)=d(x, g(x))$.


A discrete subgroup $G$ of isometries of $X$ is called \emph{elementary} if either $\textup{Fix}(G)\neq \emptyset$ or
 if $G$ preserves set-wise some bi-infinite geodesic in $X$. (In the latter case, $G$ contains an index 2 subgroup $G'$ such that  $\textup{Fix}(G')\neq \emptyset$.) Based on the fixed point set, elementary groups are divided into the following three classes \cite{Bo2}: 

\begin{enumerate}

\item $F(G)$ is a nonempty subspace of $\bar{X}$. 

\item $F(G)$ consists of a single point of $\geo X$. 

\item $G$ has no fixed point in $X$, and $G$ preserves setwise a unique bi-infinite geodesic in $X$. 

\end{enumerate}

\begin{remark}
\label{elementary groups}
If $G<\Isom(X)$ is discrete and in the first class, then $G$ is finite by discreteness and consists of elliptic isometries. If $G$ is discrete and in the second class, it is called parabolic, and it contains a parabolic isometry \cite[Proposition 4.2]{Bo2}. Discrete groups $G$ in the third class will be called  elementary loxodromic groups. 
\end{remark}

\begin{lemma}
\label{finite elementary group}
If $G<\Isom(X)$ is a discrete elementary subgroup consisting entirely of elliptic elements, then $G$ is finite. 
\end{lemma}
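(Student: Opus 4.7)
My plan is to proceed by case analysis on the three classes of discrete elementary subgroups listed in Remark \ref{elementary groups}. Classes (1) and (2) will be essentially free: in class (1) the group $G$ fixes some point $x \in X$, so $G$ is a discrete subgroup of the stabilizer of $x$ in $\Isom(X)$; since $\Isom(X)$ acts properly on $X$ this stabilizer is compact, whence $G$ is finite. In class (2), Remark \ref{elementary groups} (via \cite[Proposition 4.2]{Bo2}) asserts that $G$ contains a parabolic isometry, which directly contradicts the assumption that every element of $G$ is elliptic; hence no elementary group in class (2) satisfies the hypothesis of the lemma.

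The substantive case is class (3), where $G$ has no fixed point in $X$ but preserves setwise a unique bi-infinite geodesic $\gamma \subset X$. I would introduce the subgroup $K \trianglelefteq G$ consisting of those elements that fix $\gamma$ pointwise. Each element of $K$ fixes any chosen basepoint of $\gamma$, so $K$ is a discrete subgroup of a compact point stabilizer in $\Isom(X)$, and is therefore finite. The quotient $G/K$ acts faithfully on $\gamma$, embedding discretely into $\Isom(\gamma) \cong \R \rtimes \Z/2$, so it remains to bound $|G/K|$.

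The key point is to rule out nontrivial translations in the image of $G/K$. If some $g \in G$ acted as a nontrivial translation on $\gamma$, then $g$ would fix both endpoints of $\gamma$ in $\geo X$ and admit no fixed point in $X$ (any such fixed point would project orthogonally to a $g$-fixed point of $\gamma$, contradicting positivity of the translation length along $\gamma$); so $g$ would be loxodromic with axis $\gamma$, contradicting the hypothesis that $G$ consists entirely of elliptic isometries. Therefore the image of $G/K$ in $\Isom(\gamma)$ contains only the identity and possibly reflections; since the composition of any two distinct reflections of $\R$ is a nontrivial translation, this image has order at most $2$, and we conclude $|G| \le 2|K| < \infty$. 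The only real obstacle is class (3); once one observes that the ``no nontrivial translation on $\gamma$'' condition forces the image of $G/K$ to have order at most two, the argument is routine.
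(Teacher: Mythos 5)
Your proposal is correct and follows essentially the same route as the paper: reduce to the case of an elementary loxodromic group, restrict the action to the invariant geodesic, observe that elliptic elements restrict to the identity or to reflections, and note that two distinct reflections compose to a nontrivial translation, which would yield a loxodromic element. The only cosmetic difference is that you conclude directly with the bound $|G|\le 2|K|$ using the finite pointwise stabilizer $K$ of the geodesic, whereas the paper derives a contradiction in the loxodromic case and invokes Remark \ref{elementary groups} for finiteness otherwise.
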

\begin{proof}
By Remark \ref{elementary groups}, $G$ is either finite or loxodromic. Suppose that $G$ is loxodromic and preserves a geodesic $l\subset X$ setwise. Let $\rho: G\to \Isom(l)$ denote 
the restriction homomorphism. Since $G$ is loxodromic, the subgroup $\rho(G)$ has no fixed point in $l$. Hence, there exist two elements $g, h\in G$ such that $\rho(g), \rho(h)$ are 
distinct involutions. Their product $\rho(g) \rho(h)$ is a nontrivial translation of $l$. Hence, $gh$ is a loxodromic isometry of $X$, contradicting our assumption. 
Hence, $G$ is finite. 
\end{proof}

\begin{corollary}
Every discrete elementary loxodromic group  contains a loxodromic isometry. 
\end{corollary}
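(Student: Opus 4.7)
My plan is to argue by contradiction: suppose $G$ is a discrete elementary loxodromic group that contains no loxodromic isometry. By the definition of the third class in Section \ref{sec:elementary}, $G$ preserves setwise a unique bi-infinite geodesic $l \subset X$ and has no fixed point in $X$; let $a,b \in \geo X$ denote the two ideal endpoints of $l$.

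The first step is to show that under this assumption every element of $G$ is elliptic. For any $g \in G$, the restriction of $g$ to $l$ is an isometry of $l \cong \mathbb{R}$, hence either a translation or a reflection. A nontrivial translation of $l$ would force $g$ to be loxodromic with axis $l$, contradicting our assumption; a reflection, or the trivial translation, fixes a point of $l$ and therefore makes $g$ elliptic. Parabolic elements are automatically excluded, because any $g \in G$ must either fix or swap the pair $\{a,b\}$, whereas a parabolic isometry has a unique fixed point in $\bar X$ lying on $\geo X$, which is inconsistent with both alternatives.

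The second step is to invoke Lemma \ref{finite elementary group}: a discrete elementary subgroup consisting entirely of elliptic isometries is finite, so $G$ must be finite. The third step is to appeal to the classical fact that a finite group of isometries of a Hadamard manifold always admits a fixed point in $X$, obtained as the circumcenter of any orbit (well-defined and unique by the strict convexity of the squared distance function on $X$). This fixed point contradicts the third-class hypothesis that $G$ has no fixed point in $X$, completing the proof.

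I do not anticipate any serious obstacle: the substantive work has already been carried out in Remark \ref{elementary groups} and in the proof of Lemma \ref{finite elementary group} (which essentially manufactures a loxodromic element $gh$ under the assumption that all elements are elliptic), and the circumcenter construction is a standard consequence of the convexity geometry of Hadamard manifolds. The corollary is really just the contrapositive of Lemma \ref{finite elementary group} packaged with the observation that parabolic isometries cannot occur in an elementary loxodromic group.
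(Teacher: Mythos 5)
Your argument is correct and matches the paper's intended (unwritten) proof: the corollary is stated as an immediate consequence of Lemma \ref{finite elementary group}, via exactly the reduction you give --- an element preserving the invariant geodesic setwise restricts to a translation or reflection of it and so is loxodromic or elliptic (never parabolic), hence a loxodromic-free elementary loxodromic group would consist of elliptics, be finite by the lemma, and therefore fix a point of $X$ by the circumcenter argument, contradicting the third-class hypothesis. No gaps.
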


Consider a  subgroup $\Gamma$ of isometries of $X$. Given any subset $Q\subseteq \bar X$, let 
$$
\stab_{\Gamma}(Q)=\lbrace \gamma \in \Gamma \mid \gamma(Q)=Q \rbrace$$ 
denote the setwise stabilizer of $Q$ in $\Gamma$. 

\begin{definition}
A point $p\in \geo X$ is called a \emph{parabolic fixed point} of a subgroup $\Gamma< \Isom(X)$ if $\stab_{\Gamma}(p)$ is parabolic. 
\end{definition}

\begin{remark}
If $p\in \geo X$ is a parabolic fixed point of a discrete subgroup $\Gamma<  \Isom(X)$, then $\stab_{\Gamma}(p)$ is a maximal parabolic subgroup of $\Gamma$, see 
\cite[Proposition 3.2.1]{Bo2}. Thus, we have a bijective correspondence between the 
$\Gamma$-orbits of parabolic fixed points of $\Gamma$ and the $\Gamma$-conjugacy classes of maximal parabolic subgroups of $\Gamma$. 
\end{remark}

Consider an elementary loxodromic subgroup $G< \Gamma$ with the axis $\beta$. Then $\stab_{\Gamma}(\beta)$ is a maximal loxodromic subgroup of $\Gamma$, see \cite[Proposition 3.2.1]{Bo2}.




\section{The thick-thin decomposition}  
\label{sec:thick-thin}

For an isometry $g\in \Isom(X)$, define the \emph{Margulis region} $Mar(g, \varepsilon)$ of $g$  as:
$$Mar(g, \varepsilon)=\lbrace x\in X \mid d(x, g(x))\leq \varepsilon \rbrace .$$ 
By the convexity of the distance function, $Mar(g, \varepsilon)$ is convex.  

Given $x\in X$ and a discrete subgroup $\Gamma< \Isom(X)$, let $\mathcal{F}_{\varepsilon}(x)=\lbrace \gamma\in \Gamma \mid d(x, \gamma x)\leq \varepsilon \rbrace$ denote the set of isometries in $\Gamma$ which move $x$ a distance at most $\varepsilon$. Let $\Gamma_{\varepsilon}(x)$ denote the subgroup generated by $\mathcal{F}_{\varepsilon}(x)$. We use $\varepsilon(n, \kappa)$ to denote the Margulis constant of $X$. Then, by the Margulis Lemma, $\Gamma_{\varepsilon}(x)$ is virtually nilpotent whenever 
$0<\varepsilon\leq \varepsilon(n, \kappa)$. More precisely,


\begin{proposition}\cite[Theorem 9.5]{BaGS}
\label{nilpotent subgroup with finite index}
Given $0<\varepsilon \leq \varepsilon (n, \kappa)$ and $x\in X$, the group $N$ generated by the set $\lbrace \gamma \in \Gamma_{\varepsilon}(x) \mid n_{\gamma}(x) \leq 0.49 \rbrace$ is a nilpotent subgroup of $\Gamma_{\varepsilon}(x)$ of a uniformly bounded index (where the bound depends only on $\kappa$ and $n$). 
Moreover, each coset $\gamma N\subset \Gamma_{\varepsilon}(x)$ can be represented by an element $\gamma$ of word  length $\leq m(n, \kappa)$ in the 
generating set $\mathcal{F}_{\varepsilon}(x)$ of  $\Gamma_{\varepsilon}(x)$. Here $m(n, \kappa)$ is a constant depending only on $\kappa$ and $n$.   

\end{proposition}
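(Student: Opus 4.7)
This is the quantitative Margulis lemma in the form of Ballmann--Gromov--Schroeder, so the plan is to follow their strategy adapted to the pinched setting. The three things to prove are (i) nilpotency of $N$, (ii) boundedness of the index $[\Gamma_\varepsilon(x):N]$, and (iii) the word-length control on coset representatives. My plan is to obtain (iii) automatically from (ii), so the real work is in (i) and (ii).

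For (i), the key quantitative input is a commutator contraction estimate: there exist dimensional constants $C=C(n,\kappa)$ and $\eta_0=\eta_0(n,\kappa)$ such that for any two isometries $g,h$ with $n_g(x),n_h(x)\le \eta_0$, one has $n_{[g,h]}(x)\le C\, n_g(x)\, n_h(x)$. The estimate on the rotational part is proved by infinitesimalizing: parallel transport a tangent vector around the geodesic quadrilateral $x,g(x),gh(x),ghg^{-1}(x)$ and use Proposition \ref{lemma 2.3} together with Rauch-type comparison (this is where the pinching constant $\kappa$ enters) to bound the holonomy discrepancy in terms of the product of side lengths and the rotations. The displacement part is controlled by the standard identity $d(x,[g,h]x)\le 2(r_g(x)+r_h(x))\max(d_g(x),d_h(x))$ plus the convexity of the displacement function. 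Choosing $0.49$ safely below $\eta_0$ so that iterated commutators keep contracting geometrically, one concludes that sufficiently long iterated commutators in the generators of $N$ have norm below the Margulis constant $\varepsilon(n,\kappa)$; discreteness then forces them to be trivial, giving nilpotency of class bounded in terms of $n$.

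For (ii), note that any $\gamma\in\mathcal F_\varepsilon(x)\setminus N$ must satisfy $n_\gamma(x)>0.49$, and since $8\,d_\gamma(x)\le 8\varepsilon$ can be made small, this forces $r_\gamma(x)>c$ for an explicit $c>0$ once $\varepsilon$ is chosen small enough. The differential $g\mapsto g_{*x}\in O(T_xX)\cong O(n)$ then sends distinct cosets of $N$ in $\Gamma_\varepsilon(x)$ to points of $O(n)$ that are pairwise separated by angle at least $c$: otherwise $\gamma_1\gamma_2^{-1}$ would have rotation below $\eta_0$ and small displacement, hence small norm, so it would lie in $N$. A packing argument on the compact group $O(n)$ then bounds the number of cosets by a constant $I=I(n,\kappa)$.

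For (iii), run a breadth-first search in the Cayley graph of $\Gamma_\varepsilon(x)$ with generating set $\mathcal F_\varepsilon(x)$: at each step either a new coset is hit or we stay in an already-visited one, so after at most $I(n,\kappa)$ steps every coset has been reached; take $m(n,\kappa)=I(n,\kappa)$. The main obstacle in the whole argument is the commutator contraction estimate in step (i): in the constant-curvature case it is a one-line matrix computation, but in variable pinched curvature one must carefully combine the $CAT(-\kappa^2)$ angle comparison with a two-sided Rauch comparison to control the holonomy around small quadrilaterals, and it is precisely this step that forces the constants to depend on $\kappa$ and not only on $n$.
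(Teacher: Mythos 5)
The paper does not actually prove this proposition: it is imported verbatim from Ballmann--Gromov--Schroeder \cite[Theorem 9.5]{BaGS} and used as a black box, so there is no in-paper argument to compare yours against. Judged on its own terms, your sketch follows the standard BaGS strategy (commutator contraction and a Zassenhaus-type argument for nilpotency, packing for the index, a Schreier-graph argument for the word-length bound), and steps (i) and (iii) are essentially sound; in particular (iii) is clean, since a connected Schreier coset graph on at most $I(n,\kappa)$ vertices has diameter at most $I(n,\kappa)-1$.

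The genuine gap is in step (ii). Your separation claim --- that distinct cosets of $N$ have rotational parts $c$-separated in $O(n)$ because otherwise $\gamma_1\gamma_2^{-1}$ would have ``rotation below $\eta_0$ and small displacement, hence small norm'' --- is only valid when $\gamma_1,\gamma_2$ themselves have uniformly small displacement at $x$, e.g.\ when they lie in $\mathcal{F}_\varepsilon(x)$. A general element of $\Gamma_\varepsilon(x)$ is a long word in the generators and can move $x$ arbitrarily far; then $d(x,\gamma_2^{-1}\gamma_1 x)=d(\gamma_2 x,\gamma_1 x)$ is not small, and since $n_g(x)=\max\bigl(r_g(x),\,a\cdot d_g(x)\bigr)$ with $a\ge 8$, the norm of $\gamma_2^{-1}\gamma_1$ is large no matter how close the rotational parts are. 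So your packing argument only bounds the number of cosets meeting $\mathcal{F}_\varepsilon(x)$, not the index. The missing ingredient is exactly the statement that every coset admits a representative of uniformly bounded displacement --- which in your architecture is deduced (via (iii)) from the index bound, so the argument is circular as written. The standard repair is an induction on word length: with $S=\mathcal{F}_\varepsilon(x)$, show that every coset meeting $S^k$ already meets $S^{m_0}$, where $m_0$ is the packing number of $O(n)$ at the separation scale, after shrinking $\varepsilon$ in terms of $m_0$ so that products of at most $m_0+1$ generators still have displacement below the threshold where the norm comparison applies. (Two smaller points: $g_{*x}$ lands in $\Hom(T_xX,T_{g(x)}X)$, so you mean $P_{g(x),x}\circ g_{*x}$; and iterated commutators die not because their norm falls below the Margulis constant --- nontrivial elements of $N$ have small norm by design --- but because they converge to the identity in $\Isom(X)$ while the group is discrete.)
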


\begin{remark}
$\Gamma_{\varepsilon}(x)$ is always finitely generated.
\end{remark}

We will use the following important property of nilpotent groups in Section \ref{sec:loxodromic}: 

\begin{theorem}\cite{KD, Ku}
\label{torsion group}
Let $G$ be a nilpotent group. The set of all finite order elements of $G$ forms a characteristic subgroup of $G$. This subgroup is called the {\em torsion subgroup} of $G$ and 
denoted by Tor$(G)$. 
\end{theorem}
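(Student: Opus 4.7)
The characteristic property of $\mathrm{Tor}(G)$ is automatic once $\mathrm{Tor}(G)$ is shown to be a subgroup, because every automorphism of $G$ preserves the order of each element, hence $\phi(\mathrm{Tor}(G)) = \mathrm{Tor}(G)$ for every $\phi \in \mathrm{Aut}(G)$. Closure of $\mathrm{Tor}(G)$ under inversion is trivial since $g$ and $g^{-1}$ have the same order, so the whole content of the theorem is closure under multiplication: if $a,b \in G$ have finite order, then so does $ab$. I would prove this by induction on the nilpotency class $c$ of $G$.

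The base case $c = 1$ is abelian: if $a^m = b^n = e$, then $(ab)^{mn} = a^{mn} b^{mn} = e$. For the inductive step, let $c \geq 2$ and put $Z = Z(G)$, so that $G/Z$ is nilpotent of class $\leq c-1$. If $a, b \in G$ are torsion, their images in $G/Z$ are also torsion, hence by the inductive hypothesis $\overline{ab}$ has finite order in $G/Z$, i.e., $(ab)^N \in Z$ for some $N \geq 1$. It remains to verify that this central element has finite order in $G$ itself.

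To handle the last step, I would localize to the two-generator subgroup $H = \langle a, b \rangle \leq G$, which is finitely generated and nilpotent of class $\leq c$. Its abelianization $H/[H,H]$ is generated by the torsion images of $a$ and $b$, hence is a finite abelian group. A standard structural fact for finitely generated nilpotent groups asserts that if $H^{\mathrm{ab}}$ is finite, then $H$ itself is finite: the bilinear commutator pairing descends to a surjection
$$ H^{\mathrm{ab}} \otimes_{\Z} \gamma_i(H)/\gamma_{i+1}(H) \twoheadrightarrow \gamma_{i+1}(H)/\gamma_{i+2}(H) $$
for every $i \geq 1$, so finiteness propagates down the lower central series, which terminates at $\gamma_{c+1}(H) = 1$; hence each $\gamma_i(H)/\gamma_{i+1}(H)$ is finite and so is $H$. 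Consequently $ab \in H$ has finite order, completing the induction.

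The main obstacle is precisely the central-power step: nothing prevents a priori that a central power $(ab)^N$ of a product of torsion elements has infinite order in $Z(G)$. Overcoming it requires both the finite generation of $\langle a, b \rangle$ and the bilinearity of the commutator pairing along the lower central series; this is where the nilpotency hypothesis is genuinely used, and explains why the analogous statement fails even in metabelian groups such as the infinite dihedral group, where the product of two involutions can have infinite order.
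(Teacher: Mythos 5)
Your proof is correct. Note that the paper offers no proof of this statement at all: it is quoted as a known result with citations to Kurosh and to Dru\c tu--Kapovich, so there is nothing in the text to compare against; your argument is essentially the standard one found in those references. Two small remarks. First, your outer induction on the nilpotency class is redundant: the localization step already does all the work, since $H=\langle a,b\rangle$ is a finitely generated nilpotent group whose abelianization is generated by finitely many torsion elements and is therefore finite, whence the surjections
$$ H^{\mathrm{ab}} \otimes_{\Z} \gamma_i(H)/\gamma_{i+1}(H) \twoheadrightarrow \gamma_{i+1}(H)/\gamma_{i+2}(H) $$
force every lower central quotient, and hence $H$ itself, to be finite; this gives $ab\in\mathrm{Tor}(G)$ directly, with no need to pass to $G/Z(G)$ first. (The bilinearity and well-definedness of that pairing do need the one-line check that $[x_1x_2,y]\equiv[x_1,y][x_2,y]$ modulo $\gamma_{i+2}$, which holds because $[[x_1,y],x_2]\in\gamma_{i+2}$.) Second, your closing observation correctly identifies where nilpotency enters and why the statement fails for the infinite dihedral group, which is a useful sanity check. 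As used in the paper (Proposition \ref{produce parabolic element}), only the subgroup property for the nilpotent group $N$ is needed, and your argument delivers exactly that.
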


Given $0<\varepsilon\leq \varepsilon(n, \kappa)$ and a discrete subgroup $\Gamma<\Isom(X)$, define the set 
$$
T_{\varepsilon}(\Gamma)=\lbrace p\in X \mid \Gamma_{\varepsilon}(p) \textup { is infinite} \rbrace.$$
 Below we establish some properties of $T_{\varepsilon}(\Gamma)$ where $\Gamma< \Isom(X)$ are discrete  subgroups. 

\medskip 
Applying Lemma \ref{lem:packing}  to the subset $Z= G\cdot x\subset X$ we obtain:   

\begin{lemma}\label{lem:large displacement} 
Suppose that $G=\langle g\rangle$ is a (discrete) infinite cyclic subgroup and $x\notin \mathrm{int}(T_{\varepsilon}(G))$, i.e. $d(x, g^i(x))\ge \varepsilon$ for all $i\ne 0$. Then for every $D$ there exists $i$, 
$$
0< i\le  N(\varepsilon, n, \kappa, D) :=  1+ \frac{C_n  e^{\kappa(n-1)\varepsilon/2}}{\kappa^n V(\varepsilon/2, n)}  e^{\kappa(n-1)D}
$$
such that $d(x, g^i x)\ge D$. 
\end{lemma}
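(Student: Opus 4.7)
The argument is a direct application of the packing inequality (Lemma~\ref{lem:packing}) to the $G$-orbit $Z := G\cdot x$, using the hypothesis as the separation condition.

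First I would observe that since $G = \langle g \rangle$ is infinite cyclic and $d(x, g^i x) \ge \varepsilon$ for every $i \ne 0$, translation-invariance of the Riemannian distance gives $d(g^j x, g^k x) = d(x, g^{k-j} x) \ge \varepsilon$ for all $j \ne k$ in $\Z$. Thus $Z$ is a discrete subset of $X$ whose distinct points are at mutual distance at least $2r$, where $r := \varepsilon/2$; in particular, $Z$ satisfies the hypothesis of Lemma~\ref{lem:packing} with this value of $r$, and all orbit points $g^i x$ are pairwise distinct.

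Next I would argue by contradiction. Suppose that $d(x, g^i x) < D$ for every integer $i$ with $0 < i \le N := N(\varepsilon, n, \kappa, D)$. Then the $\lfloor N \rfloor + 1$ points
$$g^0 x = x, \quad g^1 x, \quad g^2 x,\; \ldots,\; g^{\lfloor N \rfloor} x$$
are pairwise distinct, all lie in the open ball $B(x, D)$, and are pairwise at distance at least $\varepsilon = 2r$. Applying Lemma~\ref{lem:packing} with center $x$ and radius $R = D$ yields
$$\lfloor N \rfloor + 1 \;\le\; \frac{C_n}{\kappa^{n}\, V(\varepsilon/2, n)}\, e^{\kappa(n-1)(D + \varepsilon/2)} \;=\; \frac{C_n\, e^{\kappa(n-1)\varepsilon/2}}{\kappa^{n}\, V(\varepsilon/2, n)}\, e^{\kappa(n-1)D} \;=\; N - 1,$$
where the last equality is the definition of $N$. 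But $\lfloor N \rfloor + 1 > N > N - 1$, a contradiction, and some $i$ in the range $0 < i \le N$ must satisfy $d(x, g^i x) \ge D$.

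There is no serious obstacle here: the only substantive point is verifying that the $+1$ in the definition of $N(\varepsilon, n, \kappa, D)$ exactly absorbs both the extra orbit point $g^0 x = x$ and the rounding from $N$ down to $\lfloor N \rfloor$. Lemma~\ref{lem:packing} is stated in precisely the form required, so the proof reduces to the bookkeeping of constants carried out above.
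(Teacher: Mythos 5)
Your proof is correct and follows exactly the route the paper intends: the paper gives no separate argument for this lemma, stating only that it is obtained by applying the packing inequality (Lemma~\ref{lem:packing}) to the orbit $Z=G\cdot x$, which is precisely what you carry out, with the constant bookkeeping done correctly.
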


\begin{lemma}
\label{smaller cusp}
Suppose that $G<\Isom(X)$ is a discrete parabolic subgroup and $\varepsilon>0$. For any $z\in T_{\varepsilon /3}(G)$, we have $B(z, \varepsilon /3)\subseteq T_{\varepsilon}(G)$. 
\end{lemma}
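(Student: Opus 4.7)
The plan is a direct triangle inequality argument; the parabolic hypothesis on $G$ is only contextual (for the intended application to Margulis regions of cusps) and is not needed for the inclusion itself.

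First, unwind the definitions. Saying that $z \in T_{\varepsilon/3}(G)$ means that the subgroup $G_{\varepsilon/3}(z) \le G$ generated by the set
\[
\mathcal{F}_{\varepsilon/3}(z) = \{\gamma \in G : d(z, \gamma z) \le \varepsilon/3\}
\]
is infinite. Goal: for every $w \in B(z, \varepsilon/3)$, show $G_{\varepsilon}(w)$ is infinite.

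The main step is the inclusion $\mathcal{F}_{\varepsilon/3}(z) \subseteq \mathcal{F}_{\varepsilon}(w)$. Given $\gamma \in \mathcal{F}_{\varepsilon/3}(z)$, the triangle inequality together with the fact that $\gamma$ is an isometry yields
\[
d(w,\gamma w) \;\le\; d(w,z) + d(z,\gamma z) + d(\gamma z, \gamma w) \;<\; \tfrac{\varepsilon}{3} + \tfrac{\varepsilon}{3} + \tfrac{\varepsilon}{3} \;=\; \varepsilon,
\]
using $d(w,z) < \varepsilon/3$ (since $w \in B(z,\varepsilon/3)$), $d(z,\gamma z) \le \varepsilon/3$ (since $\gamma \in \mathcal{F}_{\varepsilon/3}(z)$), and $d(\gamma z, \gamma w) = d(z,w) < \varepsilon/3$. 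Hence $\gamma \in \mathcal{F}_{\varepsilon}(w)$.

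Taking the groups generated, this gives $G_{\varepsilon/3}(z) \subseteq G_{\varepsilon}(w)$. Since the former is infinite by assumption, so is the latter, which means $w \in T_{\varepsilon}(G)$. As $w \in B(z,\varepsilon/3)$ was arbitrary, $B(z,\varepsilon/3) \subseteq T_{\varepsilon}(G)$.

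There is essentially no obstacle here; the only subtlety is to remember that elements of $\Isom(X)$ preserve distance, so that the third term in the triangle inequality collapses to $d(z,w)$, which is what makes the three $\varepsilon/3$'s add up correctly to $\varepsilon$. The factor of $3$ in the statement is precisely calibrated to this triangle-inequality argument.
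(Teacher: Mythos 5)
Your proof is correct and is essentially identical to the paper's: both verify via the triangle inequality that every $\gamma\in\mathcal{F}_{\varepsilon/3}(z)$ moves any $w\in B(z,\varepsilon/3)$ by at most $\varepsilon$, so the infinite group generated by $\mathcal{F}_{\varepsilon/3}(z)$ sits inside the one generated by $\mathcal{F}_{\varepsilon}(w)$. Your observation that the parabolic hypothesis is not actually used in this step is also accurate.
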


\begin{proof}
The set  $\calF_{\varepsilon/3}(z)=\lbrace \gamma \in G| d(z, \gamma(z))\leq \varepsilon/3 \rbrace$ generates an infinite  subgroup of $G$ since $z\in T_{\varepsilon /3}(G)$. For any element $\gamma\in \calF_{\varepsilon/3}(z)$ and $z'\in B(z, \varepsilon /3)$, we have 
$$d(z', \gamma(z'))\leq d(z, z')+d(z, \gamma(z))+d(\gamma(z), \gamma (z'))\leq \varepsilon/3 + \varepsilon / 3 +\varepsilon /3= \varepsilon.$$
Therefore, $\calF_{\varepsilon}(z')=\lbrace \gamma \in G| d(z', \gamma (z'))\leq \varepsilon \rbrace$ also generates an infinite subgroup. Thus  $z'\in T_{\varepsilon}(G_{i})$ and $B(z, \varepsilon/3)\subseteq T_{\varepsilon}(G)$. 

\end{proof}

\begin{proposition}\cite[Proposition 3.5.2]{Bo2}
\label{starlike}
Suppose $G< \Isom(X)$ is a discrete parabolic subgroup with the fixed point $p\in \geo X$, and  $\varepsilon >0$. Then $T_{\varepsilon}(G) \cup \lbrace p \rbrace$  
is starlike about $p$, i.e. for each $x\in \bar{X}\setminus \lbrace p \rbrace$, 
 the intersection $xp\cap T_{\varepsilon}(G)$ is a ray asymptotic to $p$. 
\end{proposition}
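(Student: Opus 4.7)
The plan is to use the two monotonicity statements of Lemma \ref{prop 1.1} together with the discreteness of $G$. Since every $g\in G$ is an isometry fixing $p\in\geo X$, the equivariance $g\circ\rho_y=\rho_{g(y)}$ holds for each $y\in X$, where $\rho_y:\R_+\to X$ is the arclength parameterization of the ray from $y$ to $p$.

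Applying Lemma \ref{prop 1.1}(1) to the pair $(y,g(y))$ shows that $t\mapsto d(\rho_y(t),g(\rho_y(t)))$ is non-increasing in $t$. Parameterizing the geodesic $xp$ so that $t$ increases toward $p$, each $g\in G$ therefore enters the set $\mathcal{F}_\varepsilon(\rho_x(t))=\{h\in G:d(\rho_x(t),h(\rho_x(t)))\le\varepsilon\}$ at some first time $t_g$ and remains in it for all $t\ge t_g$. Hence the subgroups $G_\varepsilon(\rho_x(t))$ generated by these sets are nested in $t$, so $xp\cap T_\varepsilon(G)$ is forward-closed along $xp$ in the direction of $p$.

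Next, I show the intersection contains all points of $xp$ sufficiently close to $p$. By Remark \ref{elementary groups}, the discrete parabolic $G$ contains an infinite-order parabolic element $g_0$; the points $x$ and $g_0(x)$ lie on a common horosphere about $p$, so Lemma \ref{prop 1.1}(2) gives $d(\rho_x(t),g_0(\rho_x(t)))\le Re^{-t}$, whence $g_0\in\mathcal{F}_\varepsilon(\rho_x(t))$ for all $t$ large and $G_\varepsilon(\rho_x(t))\supseteq\langle g_0\rangle$ is infinite. When $x\in\geo X\setminus\{p\}$ I must also check that $t_0:=\inf\{t:\rho_x(t)\in T_\varepsilon(G)\}$ is finite: for any $g\in G$ not fixing $xp$ pointwise, $d(\rho_x(t),g(\rho_x(t)))\to\infty$ as $t\to-\infty$, for otherwise continuity of $g$ on $\bar{X}$ would force $g(x)=x$, and since $G$ contains no loxodromic elements any such $g$ would then have to fix $xp$ pointwise. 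The pointwise stabilizer of $xp$ in $G$ is finite by discreteness, so $G_\varepsilon(\rho_x(t))$ is finite for $t$ very negative and $t_0>-\infty$.

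Combined, these give $xp\cap T_\varepsilon(G)\supseteq\{\rho_x(t):t>t_0\}$ with $t_0$ finite. The final step is to argue $t_0$ is attained, so the intersection is actually a closed ray. If not, then for every $\delta>0$ some new element $g\in\mathcal{F}_\varepsilon(\rho_x(t_0+\delta))\setminus\mathcal{F}_\varepsilon(\rho_x(t_0))$ must exist, yielding a sequence of distinct $g_n\in G$ with arrival times $t_{g_n}\downarrow t_0$ and, by continuity, $d(\rho_x(t_{g_n}),g_n(\rho_x(t_{g_n})))=\varepsilon$. The triangle inequality then gives $d(\rho_x(t_0),g_n(\rho_x(t_0)))\le\varepsilon+2(t_{g_n}-t_0)$, placing infinitely many distinct $g_n$ in $\mathcal{F}_{\varepsilon+1}(\rho_x(t_0))$ and contradicting the discreteness of the orbit $G\cdot\rho_x(t_0)$. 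I expect this closedness step to be the main technical point; the monotonicity and non-emptiness are essentially direct consequences of Lemma \ref{prop 1.1}.
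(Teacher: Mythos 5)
The paper offers no proof of this statement --- it is quoted from Bowditch \cite[Proposition 3.5.2]{Bo2} --- so there is nothing internal to compare against; your argument is the standard one and is essentially correct. The three ingredients (monotone decay of the displacement functions $t\mapsto d(\rho_x(t),g(\rho_x(t)))$ from Lemma \ref{prop 1.1}(1), giving that $\mathcal{F}_{\varepsilon}(\rho_x(t))$ is nested in $t$; the exponential decay of Lemma \ref{prop 1.1}(2) applied to a parabolic $g_0\in G$, giving non-emptiness; discreteness, handling the endpoint) are exactly the right ones.

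Two places need an extra line. First, to conclude that $G_{\varepsilon}(\rho_x(t))$ is finite for $t$ very negative when $x\in\geo X\setminus\{p\}$, it is not enough that each \emph{individual} $g$ not fixing $xp$ pointwise has $d(\rho_x(t),g(\rho_x(t)))\to\infty$ as $t\to-\infty$; you need a single threshold below which \emph{no} such $g$ lies in $\mathcal{F}_{\varepsilon}(\rho_x(t))$. This is where the monotonicity you established must be reused: for $t\le 0$ one has $\mathcal{F}_{\varepsilon}(\rho_x(t))\subseteq\mathcal{F}_{\varepsilon}(\rho_x(0))$, and the latter is finite by discreteness, so one may take the minimum of the finitely many escape times. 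Second, the distinctness of the $g_n$ in your closedness argument should be justified: since the sets $\mathcal{F}_{\varepsilon}(\rho_x(t_0+\delta_n))\setminus\mathcal{F}_{\varepsilon}(\rho_x(t_0))$ are nested, nonempty and (by discreteness) finite, if they contained only finitely many elements in total they would share a common element $g$, whose arrival time would then satisfy $t_g\le t_0$ and hence $g\in\mathcal{F}_{\varepsilon}(\rho_x(t_0))$, a contradiction; this is what produces infinitely many distinct $g_n$. You also use without comment that a discrete parabolic group contains no loxodromic element; this is standard (and is what makes ``$g(x)=x$ forces $g$ to fix $xp$ pointwise'' work), but it deserves a citation to \cite{Bo2} since it is not proved in this paper either.
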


\begin{corollary}
\label{quasiconvex}
Suppose that $G< \Isom(X)$ is a discrete parabolic subgroup with the fixed point $p\in \geo X$. For every $\varepsilon>0$, $T_{\varepsilon}(G)$ is a $\delta$-quasiconvex subset of $X$. 

\end{corollary}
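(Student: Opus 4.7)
The plan is to combine the starlike structure of $T_\varepsilon(G)\cup\{p\}$ provided by Proposition \ref{starlike} with the $\delta$-hyperbolicity of $X$ (in the form of slimness of ideal triangles). The argument will be short, in the spirit of the proof of Lemma \ref{hyper}.

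First I would fix arbitrary points $x,y\in T_\varepsilon(G)$ and show that the entire geodesic rays $xp$ and $yp$ (from $x$, respectively $y$, to the ideal point $p$) are contained in $T_\varepsilon(G)$. Indeed, by Proposition \ref{starlike}, the intersection $xp\cap T_\varepsilon(G)$ is a ray asymptotic to $p$, hence of the form $[x',p)\subset xp$ for some $x'\in xp$. Since $x\in T_\varepsilon(G)\cap xp$, the point $x$ must lie on $[x',p)$, so the whole sub-ray from $x$ to $p$ is contained in $T_\varepsilon(G)$. The same reasoning gives $yp\subset T_\varepsilon(G)$.

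Next, I would consider the (generalized) geodesic triangle with vertices $x,y\in X$ and $p\in\geo X$, consisting of the segment $xy$ and the two rays $xp,yp$. By the $\delta$-hyperbolicity of $X$, this triangle is $\delta$-slim: every $z\in xy$ lies in the $\delta$-neighborhood of $xp\cup yp$. This is the same limiting argument used in the proof of Lemma \ref{hyper}: approximate $p$ by points $p_n\in X$ lying on (say) the ray $xp$ far from $x$ and $y$, apply $\delta$-slimness to the finite triangles $[xyp_n]$, and use the fact that the projections of $z$ to the approaching sides stay bounded. Consequently every $z\in xy$ satisfies $d(z, xp\cup yp)\le\delta$.

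Combining the two steps, for every $z\in xy$ there is a point of $xp\cup yp\subset T_\varepsilon(G)$ within distance $\delta$, so $xy\subset \bar{N}_\delta(T_\varepsilon(G))$. As $x,y\in T_\varepsilon(G)$ were arbitrary, this shows $T_\varepsilon(G)$ is $\delta$-quasiconvex. The only mildly technical point is the slimness of the ideal triangle $[x,y,p]$; everything else is immediate from Proposition \ref{starlike}. I do not foresee a real obstacle.
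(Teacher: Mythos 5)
Your proof is correct and follows essentially the same route as the paper: both arguments reduce the statement to the starlikeness of $T_{\varepsilon}(G)\cup\{p\}$ about $p$ from Proposition \ref{starlike}. The only difference is that the paper then cites Bowditch's result that every starlike set is $\delta$-quasiconvex as a black box, whereas you supply its (correct) proof via slimness of the ideal triangle $[x,y,p]$.
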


\begin{proof}
By Proposition \ref{starlike}, $T_{\varepsilon}(G) \cup \lbrace p \rbrace$ is starlike about $p$.
Every starlike set is 
$\delta$-quasiconvex, \cite[Corollary 1.1.6]{Bo2}. Thus $T_{\varepsilon}(G)$ is $\delta$-quasiconvex for every  discrete parabolic subgroup $G< \Isom(X)$. 
\end{proof}

\begin{remark}
\label{constant for quasiconvex}
According to Proposition \ref{convex}, there exists  $r={\mathfrak r}_{\kappa}(\delta)\in [0, \infty)$ such that $\Hull(T_{\varepsilon}(G))\subseteq \bar{N}_{r}(T_{\varepsilon}(G))$ for any $\varepsilon>0$.
\end{remark}

\begin{lemma}\label{lem:boundaryofcusp}
If $G<\Isom(X)$ is a discrete parabolic subgroup with the fixed point $p\in \geo X$, then $\geo T_{\varepsilon}(G)=\lbrace p \rbrace$.
\end{lemma}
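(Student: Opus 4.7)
The plan is to establish $\partial_\infty T_\varepsilon(G) = \{p\}$ by proving both inclusions. For $p \in \partial_\infty T_\varepsilon(G)$, I would invoke Remark \ref{elementary groups} to fix a parabolic element $g \in G$. For any $y_0 \in X$, since $g$ fixes $p$ and preserves horospheres about $p$, the points $y_0$ and $g(y_0)$ lie on a common horosphere, and $\rho_{g(y_0)}(t) = g(\rho_{y_0}(t))$ for all $t \ge 0$. Lemma \ref{prop 1.1}(2) then gives $d(\rho_{y_0}(t), g\rho_{y_0}(t)) \le R\,e^{-t}$ for some $R = R(d(y_0, g(y_0)))$, which drops below $\varepsilon$ once $t$ is large enough. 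Hence $\rho_{y_0}(t) \in T_\varepsilon(\langle g \rangle) \subseteq T_\varepsilon(G)$ eventually, and $\rho_{y_0}(t) \to p$ in $\bar X$.

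For the reverse inclusion, suppose for contradiction $\xi \in \partial_\infty T_\varepsilon(G)$ with $\xi \neq p$ and pick $x_n \in T_\varepsilon(G)$ with $x_n \to \xi$. Let $G_\xi := \mathrm{Stab}_G(\xi)$. The key observation is that for any $g \in G$, if $d(x_n, g x_n) \le \varepsilon$ along a subsequence then $g \in G_\xi$: otherwise $g(\xi) \neq \xi$, so $g(x_n) \to g(\xi) \neq \xi$, and the $\delta$-hyperbolicity of $X$ forces $d(x_n, g x_n) \to \infty$, a contradiction. Moreover $G_\xi$ is finite: a discrete parabolic subgroup of $\mathrm{Isom}(X)$ contains no loxodromic element (if some $g \in G$ were loxodromic with axis $pq$, conjugating any parabolic $h \in G$ by powers of $g$ would produce parabolic elements whose horosphere-translation lengths decay exponentially, violating discreteness of $G$), so every element of $G_\xi$ is elliptic; such an element fixes both endpoints of the biinfinite geodesic $p\xi$, hence fixes $p\xi$ pointwise, so $G_\xi$ is contained in the stabilizer of any point of $p\xi$, which is finite by discreteness.

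To complete the proof, for each $n$ choose $g_n \in \mathcal F_\varepsilon(x_n) \setminus \{e\}$ and argue $g_n \in G_\xi$ for large $n$. If $\{g_n\}$ takes only finitely many distinct values, a constant subsequence $g_n = g$ yields $g \in G_\xi$ by the key observation. If $\{g_n\}$ takes infinitely many values, then discreteness of $G$ forces $d(x_0, g_n x_0) \to \infty$, and since $\Lambda(G) = \{p\}$, after extraction $g_n x_0 \to p$. The convergence-group property of the discrete parabolic action of $G$ on $\bar X$ (with unique limit point $p$) then yields $g_n y \to p$ uniformly on compact subsets of $\bar X \setminus \{p\}$; applied to the compact set $K := \{x_n\} \cup \{\xi\} \subset \bar X \setminus \{p\}$ this gives $g_n x_n \to p$. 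But $d(x_n, g_n x_n) \le \varepsilon$ together with $x_n \to \xi$ forces $g_n x_n \to \xi$ as well, contradicting $\xi \neq p$. Therefore $\mathcal F_\varepsilon(x_n) \subseteq G_\xi$ eventually, so $\Gamma_\varepsilon(x_n) \subseteq G_\xi$ is finite, contradicting $x_n \in T_\varepsilon(G)$. The main obstacle is making the uniform parabolic-convergence step precise: ruling out infinitely many distinct $g_n$'s requires uniform convergence $g_n y \to p$ on the specific compact set $K$ (not on a compact set fixed in advance); this is standard convergence-group theory once $\Lambda(G) = \{p\}$ is invoked, but the uniformity over $K$ is the delicate point of the argument.
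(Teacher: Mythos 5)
Your proof is correct in substance but follows a genuinely different and much heavier path than the paper's. The paper disposes of the lemma in three lines using Proposition \ref{starlike}: if $\xi\in\geo T_{\varepsilon}(G)$ with $\xi\neq p$, take $x_i\in T_{\varepsilon}(G)$ converging to $\xi$; starlikeness gives $x_ip\subseteq T_{\varepsilon}(G)$, and since $T_{\varepsilon}(G)$ is closed in $X$, passing to the limit puts the entire geodesic $\xi p$ inside $T_{\varepsilon}(G)$ --- contradicting the fact (from Lemma \ref{prop 1.1}(2), cf.\ \cite[Proposition 3.5.2]{Bo2}) that $\xi p$ also meets the complement of $T_{\varepsilon}(G)$. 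You instead run a stabilizer-plus-dynamics argument: $G_\xi=\mathrm{Stab}_G(\xi)$ is finite, and $\mathcal{F}_{\varepsilon}(x_n)$ is eventually trapped in $G_\xi$, contradicting $x_n\in T_{\varepsilon}(G)$. This buys independence from the starlikeness result, but at the cost of importing the convergence property of the $G$-action on $\bar{X}$ (uniform convergence $g_n\to p$ on compacta of $\bar{X}\setminus\{p\}$ when $\Lambda(G)=\{p\}$), a fact nowhere established in the paper; you rightly flag this as the delicate step. It can be avoided more cheaply: since $d(x_n,g_nx_n)\le\varepsilon$ and $x_n\to\xi$, the geodesics $g_n(x_0x_n)$ run from $g_nx_0\to p$ (orbit accumulation at the unique limit point) to $g_nx_n\to\xi$, hence by Lemma \ref{hyper} pass uniformly close to a fixed point $q$ on $p\xi$; then $g_n^{-1}q$ stays within bounded distance of the ray $x_0\xi$ while $d(q,g_n^{-1}q)\to\infty$ by discreteness, forcing the orbit $Gq$ to accumulate at $\xi\neq p$, contradicting $\Lambda(G)=\{p\}$.

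One small repair is needed in your final step: you must choose $g_n\in\mathcal{F}_{\varepsilon}(x_n)\setminus G_\xi$ (supposed nonempty for infinitely many $n$, for contradiction), not merely $g_n\neq e$. Showing that one arbitrarily chosen nontrivial element lands in $G_\xi$ does not by itself give $\mathcal{F}_{\varepsilon}(x_n)\subseteq G_\xi$; with the adversarial choice, your two cases (finitely many values versus infinitely many) do close the argument. The forward inclusion $p\in\geo T_{\varepsilon}(G)$ is fine as written and is left implicit in the paper.
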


\begin{proof}
By  Lemma \ref{prop 1.1}(2), for any $p'\in \geo X\setminus \lbrace p \rbrace$, both $p'p\cap T_{\varepsilon}(G)$ and $X \cap( p'p\setminus T_{\varepsilon}(G))$ are nonempty \cite[Proposition 3.5.2]{Bo2}. If $p'\in \geo T_{\varepsilon}(G)$, there exists a sequence of points $( x_{i} ) \subseteq T_{\varepsilon}(G)$ which converges to $p'$. By Proposition \ref{starlike}, $ x_{i}p \subseteq T_{\varepsilon}(G)$. Since $T_{\varepsilon}(G)$ is closed in $X$, then $p'p\subseteq T_{\varepsilon}(G)$, which is a contradiction. 

\end{proof}

\begin{proposition}
\label{enter cusp}
Suppose that $G<\Isom(X)$ is a discrete parabolic subgroup with the fixed point $p\in \geo X$.  Given $r>0$ and $x\in X$ with $d(x, \textup{Hull}(T_{\varepsilon}(G)))=r$, if $( x_{i} )$ is a sequence of points on the boundary of $\bar{N}_{r}(\textup{Hull}(T_{\varepsilon}(G)))$ and  $d(x, x_{i})\rightarrow \infty$, then there exists $z_{i}\in xx_{i}$ such that the sequence $( z_{i} )$ converges to $p$ and for every $\varepsilon>0$, 
$z_{i}\in \bar{N}_{\delta}(T_{\varepsilon}(G))$ for all sufficiently large $i$. 

\end{proposition}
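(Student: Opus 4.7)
I would proceed in three steps: (i) upgrade the hypothesis to $x_i\to p$ in $\bar{X}$; (ii) show that the segments $xx_i$ converge to the ray $xp$ uniformly on compact initial intervals; (iii) choose $z_i$ diagonally on those initial fellow-travel segments and use Proposition \ref{starlike} to place them in the required $\delta$-neighborhood of the thin parts $T_{\varepsilon'}(G)$.

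For (i), Corollary \ref{quasiconvex} gives that $T_\varepsilon(G)$ is $\delta$-quasiconvex, so by Proposition \ref{convex} the hull $H:=\textup{Hull}(T_\varepsilon(G))$ lies in $\bar{N}_{{\mathfrak r}_\kappa(\delta)}(T_\varepsilon(G))$; combined with Lemma \ref{lem:boundaryofcusp} this forces $\geo H=\{p\}$. For each $i$ pick $y_i\in H$ with $d(x_i,y_i)=r$; the inequality $d(x,y_i)\ge d(x,x_i)-r\to\infty$ shows that $(y_i)$ escapes every compact subset of $X$ while remaining in $H$, hence $y_i\to p$ in $\bar{X}$. Since $d(x_i,y_i)=r$ is bounded, $x_i\to p$ as well.

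For (ii), parameterize the ray $xp$ by arclength as $\gamma\colon[0,\infty)\to X$ and each segment $xx_i$ as $\gamma_i\colon[0,d(x,x_i)]\to X$. Since $x_i\to p$, the Gromov products $(x_i\mid p)_x$ tend to infinity, so the standard thin-triangle estimate applied to the ideal triangle with vertices $x,x_i,p$ yields, for every $T\ge 0$ and every $\eta>0$, that $d(\gamma_i(t),\gamma(t))\le\eta$ uniformly for $t\in[0,T]$ and all sufficiently large $i$. A diagonal extraction then produces $T_i\to\infty$ with $T_i\le d(x,x_i)$ and $d(\gamma_i(T_i),\gamma(T_i))\le\delta$ for all (sufficiently large) $i$. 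Set $z_i:=\gamma_i(T_i)\in xx_i$.

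For (iii), $T_i\to\infty$ forces $\gamma(T_i)\to p$ along $xp$, and together with the bound $d(z_i,\gamma(T_i))\le\delta$ this yields $z_i\to p$ in $\bar{X}$. Now fix any $\varepsilon'>0$: by Proposition \ref{starlike}, $xp\cap T_{\varepsilon'}(G)$ is a ray asymptotic to $p$, so there exists $T_0=T_0(\varepsilon')$ with $\gamma(t)\in T_{\varepsilon'}(G)$ for every $t\ge T_0$. Once $T_i\ge T_0$, which holds for all $i$ large, $\gamma(T_i)\in T_{\varepsilon'}(G)$ and hence $z_i\in\bar{N}_\delta(T_{\varepsilon'}(G))$, as required. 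The only nontrivial input is the uniform convergence in (ii), but this is routine $\delta$-hyperbolic geometry for the ideal triangle $xx_ip$ (or, equivalently, a continuity-of-$\exp_x$ argument in the CAT($-1$) setting), so I do not expect a serious obstacle.
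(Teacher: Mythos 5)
Your argument is correct and follows essentially the same route as the paper's: both first upgrade the hypothesis to $x_i\to p$ using $\geo T_{\varepsilon}(G)=\{p\}$, then place $z_i$ on $xx_i$ within $\delta$ of a point far out on the ray $xp$, and finally invoke the starlikeness of $T_{\varepsilon'}(G)$ (Proposition \ref{starlike}) to conclude for every $\varepsilon'>0$. The only cosmetic difference is that the paper produces $z_i$ in one step from the $\delta$-thinness of the ideal triangle $[x\,x_i\,p]$ (taking the point of $xx_i$ that is $\delta$-close to both $px$ and $px_i$), whereas you use uniform convergence of the initial segments of $xx_i$ to $xp$ together with a diagonal extraction.
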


\begin{proof} 

By the $\delta$-hyperbolicity of $X$, there exists a point $z_{i}\in xx_{i}$ such that $d(z_{i}, px)\le \delta$ and $d(z_{i}, px_{i})\le \delta$. Let $w_{i}\in px_{i}$ and $v_{i}\in px$ be the points closest to $z_i$, see  
Figure \ref{converge}. Then $d(z_{i}, w_{i})\leq \delta$, $d(z_{i}, v_{i})\leq \delta$ and, hence, $d(w_{i}, v_{i})\leq 2\delta$. 

According to Lemma \ref{lem:boundaryofcusp}, the sequence $(x_i)$ converges to the point $p$. Hence, any sequence of points on $x_ip$ converges to $p$ as well; in particular, $(w_i)$ converges to $p$. 
As $d(w_i, z_i)\le \delta$, we also obtain 
$$
\lim_{i\to\infty} z_i= p. 
$$

Since $d(z_i, v_i)\le \delta$, it suffices to show that $v_i\in T_{\varepsilon}(G)$ for all sufficiently large $i$. This follows from the fact that $d(x, v_i)\to\infty$ and  that $xp\cap T_{\varepsilon}(G)$ is a geodesic ray asymptotic to $p$.

\end{proof}

\begin {figure}[H]
\centering
\includegraphics[width=4.0in]{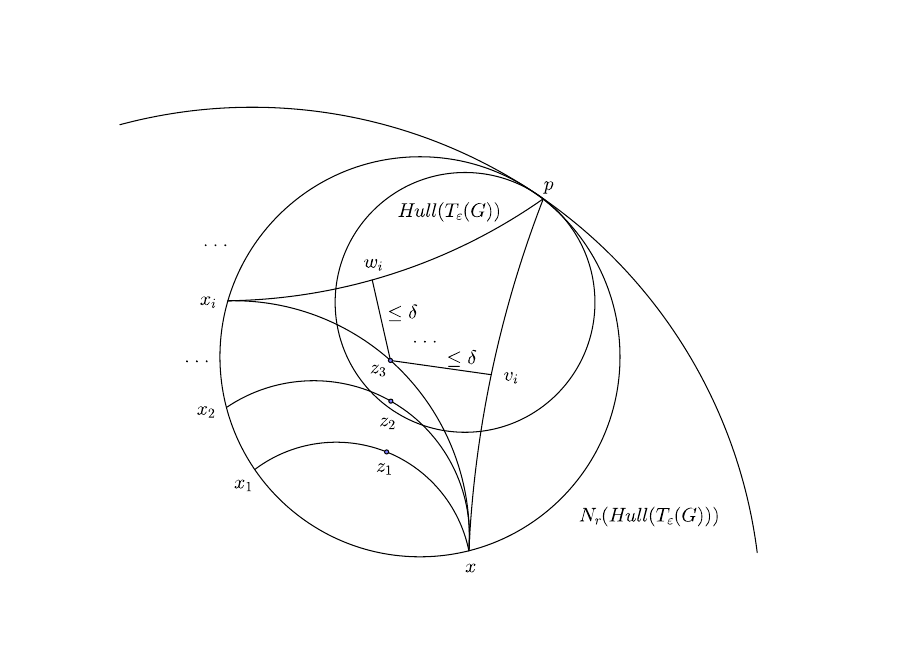}

\caption{ \label{converge}}
\end{figure}

\begin{proposition}
\label{free homotopic2}
Let $\langle g \rangle < \Isom(X)$ be the cyclic group generated by a loxodromic isometry $g$. Let $\gamma$ denote the simple closed geodesic $A_{g}/ \langle g \rangle$ in 
$M =X/ \langle g \rangle$. If $w \subseteq M$ is a piecewise-geodesic loop freely homotopic to $\gamma$ which consists of $r$ geodesic segments, then 
$$d(w, \gamma\cup (Mar(g, \epsilon)/ \langle g\rangle))\leq \cosh^{-1}(\sqrt{2})\lceil \log_{2} r \rceil+\sinh^{-1}(2/\epsilon).$$

\end{proposition}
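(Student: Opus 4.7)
The plan is to lift $w$ to the universal cover $X$ and combine the logarithmic tracking estimate of Proposition \ref{lemma 2.7} with the quadrilateral inequality of Corollary \ref{lemma 2.5}. Since $w$ is freely homotopic to $\gamma$ and $\gamma$ represents the generator $g$ of $\pi_{1}(M)=\langle g\rangle$, I would first lift $w$ to a piecewise-geodesic path $\tilde w\subset X$ joining a point $\tilde p$ to $g(\tilde p)$, consisting of $r$ geodesic segments. Applying Proposition \ref{lemma 2.7} to the $r+1$ consecutive vertices of $\tilde w$ gives
\[
[\tilde p,g(\tilde p)]\subseteq \bar N_{\lambda}(\tilde w),\qquad \lambda:=\cosh^{-1}(\sqrt{2})\lceil\log_{2} r\rceil,
\]
so the midpoint $\tilde m$ of the geodesic segment $[\tilde p,g(\tilde p)]$ lies within $\lambda$ of some point $\tilde q\in\tilde w$.

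The heart of the proof is to show $d(\tilde m,\, A_{g}\cup (Mar(g,\epsilon)))\le \sinh^{-1}(2/\epsilon)$; combining this with the triangle inequality and projecting $\tilde q$ to $M$ then delivers the desired bound. If $d(\tilde m,g(\tilde m))\le \epsilon$ the estimate is trivial, so assume $d(\tilde m,g(\tilde m))>\epsilon$. Set $q_{0}=\textup{Proj}_{A_{g}}(\tilde m)$; since $g$ preserves $A_{g}$, we have $g(q_{0})=\textup{Proj}_{A_{g}}(g(\tilde m))$, and the quadrilateral $Q=[\tilde m,q_{0},g(q_{0}),g(\tilde m)]$ is of Saccheri type: right angles at $q_{0}$ and $g(q_{0})$, equal legs of length $D:=d(\tilde m,A_{g})$, base $l(g)$, summit $>\epsilon$. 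To produce a three-right-angles configuration, I would bisect $Q$ using the midpoint $\tilde m^{\ast}$ of $[\tilde m,g(\tilde m)]$ and its projection $q^{\ast}=\textup{Proj}_{A_{g}}(\tilde m^{\ast})$. Collinearity of $\tilde m,\tilde m^{\ast},g(\tilde m)$ forces one of $\angle q^{\ast}\tilde m^{\ast}\tilde m$, $\angle q^{\ast}\tilde m^{\ast}g(\tilde m)$ to be $\ge\pi/2$; after swapping $\tilde m$ with $g(\tilde m)$ if needed (a symmetry of $Q$), the sub-quadrilateral $[\tilde m,q_{0},q^{\ast},\tilde m^{\ast}]$ has three consecutive angles $\ge \pi/2$, and Corollary \ref{lemma 2.5}(1) yields
\[
\sinh(d(q_{0},q^{\ast}))\,\sinh(d(q^{\ast},\tilde m^{\ast}))\le 1.
\]
Combining this with the analogous inequality for the $g$-translated half of $Q$, the Saccheri-summit identity $\sinh(d(\tilde m,g(\tilde m))/2)=\cosh(D)\sinh(l(g)/2)$ (derived from the same comparison), and the hypothesis $d(\tilde m,g(\tilde m))>\epsilon$, should produce $\sinh(D)\sinh(\epsilon/2)\le 1$, whence $D\le \sinh^{-1}(2/\epsilon)$ since $\sinh(\epsilon/2)\ge \epsilon/2$.

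The main obstacle is the quadrilateral step. In $\mathbb{H}^{2}$, the Saccheri $Q$ enjoys a $\mathbb{Z}/2$-symmetry that automatically places $q^{\ast}$ at the midpoint of $[q_{0},g(q_{0})]$, handing over $\sinh(D)\sinh(l(g)/2)\le 1$ essentially for free. In a general negatively pinched Hadamard manifold this symmetry fails, and one has to recover the estimate from the WLOG angle choice at $\tilde m^{\ast}$ combined with comparison of both halves of $Q$, together with control on $d(q_{0},q^{\ast})$ from the base inequality $d(q_{0},g(q_{0}))=l(g)$. A further subtlety is to unify the two regimes $l(g)\ge\epsilon$ (where $Mar(g,\epsilon)$ is a neighborhood of or equal to $A_{g}$ and distance to $A_{g}$ is what matters) and $l(g)<\epsilon$ (where $A_{g}\subset Mar(g,\epsilon)$ is a thick tube and the bound must measure distance to $\partial Mar(g,\epsilon)$); essentially the same Corollary \ref{lemma 2.5}(1) inequality applied to a slightly different quadrilateral handles the small-translation regime, yielding the same final constant $\sinh^{-1}(2/\epsilon)$.
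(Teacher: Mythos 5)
Your setup (lifting $w$ to a path from $\tilde p$ to $g(\tilde p)$, applying Proposition \ref{lemma 2.7} to place the geodesic chord in the $\lambda$-neighborhood of the lift, and then bounding the distance from that chord to $A_g\cup Mar(g,\epsilon)$) matches the paper's strategy. But the core estimate is where your argument has a genuine gap. First, the target inequality you aim for, $\sinh(D)\sinh(\epsilon/2)\le 1$ with $D=d(\tilde m,A_g)$, is simply false when $l(g)<\epsilon$: already in $\H^2$, a point $\tilde m$ just outside the Margulis tube $Mar(g,\epsilon)$ has $d(\tilde m,g\tilde m)$ slightly larger than $\epsilon$ while $D=d(\tilde m,A_g)$ is as large as $\cosh^{-1}(\sinh(\epsilon/2)/\sinh(l(g)/2))$, which blows up as $l(g)\to 0$. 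Second, the ingredients you list cannot produce an upper bound on $D$ in any regime: the Saccheri-summit relation in curvature $\le -1$ only holds as the one-sided comparison $\sinh(d(\tilde m,g\tilde m)/2)\ge \cosh(D)\sinh(l(g)/2)$, so the hypothesis $d(\tilde m,g\tilde m)>\epsilon$ yields a \emph{lower} bound on $\cosh(D)$, i.e.\ the wrong direction; and the inequality you extract from Corollary \ref{lemma 2.5}(1), namely $\sinh(d(q_0,q^*))\sinh(d(q^*,\tilde m^*))\le 1$, controls the product of the half-base with the leg at $\tilde m^*$, not the leg $D=d(\tilde m,q_0)$. To get a conclusion about $D$ from Corollary \ref{lemma 2.5}(1) you would need the angle of the sub-quadrilateral at $\tilde m$ to be $\ge\pi/2$, which is exactly what your WLOG does not provide. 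You flag this as ``the main obstacle,'' and indeed it is not resolved.

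The paper's proof sidesteps all of this by never isolating the midpoint or invoking a Saccheri identity. It takes $h$ to be the \emph{minimal} distance from the chord $\tilde\alpha=x_0x_r$ (with $x_r=g(x_0)$) to $Mar(g,\epsilon)$, realized by $A\in\tilde\alpha$ and $B\in Mar(g,\epsilon)$, and projects the two \emph{endpoints} to $F=\textup{Proj}_{Mar(g,\epsilon)}(x_r)$ and $D=\textup{Proj}_{Mar(g,\epsilon)}(x_0)$. Two applications of Corollary \ref{lemma 2.5}(1), to $[x_rABF]$ and its mate, give $d(B,F)\le 1/\sinh(h)$ and $d(B,D)\le 1/\sinh(h)$; equivariance of the projection forces $F=g(D)$ and, since $D$ lies on $\partial Mar(g,\epsilon)$, $d(D,g(D))=\epsilon$, whence $\epsilon\le 2/\sinh(h)$ and $h\le\sinh^{-1}(2/\epsilon)$. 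The same argument with $A_g$ in place of $Mar(g,\epsilon)$ handles the case $Mar(g,\epsilon)=\emptyset$ (where $l(g)\ge\epsilon$), so the two regimes you worry about are treated uniformly. I would replace your Saccheri analysis with this endpoint-projection argument; your first and last steps can then stand as written.
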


\begin{proof}
Let $x\in w$ be one of the vertices. Connect this point to itself by a geodesic segment $\alpha$ in $M$ which is homotopic to $w$ (rel $\lbrace x \rbrace$). The loop $w \ast \alpha^{-1}$ lifts to a polygonal loop $\beta\subseteq X$ with consecutive vertices $x_{0}, x_{1}, \cdots, x_{r}$ such that the geodesic segment $\tilde{\alpha}:=x_{0}x_{r}$  covers $\alpha$. Let $\tilde{w}$ denote the union of edges of $\beta$ distinct from $\tilde{\alpha}$. By Proposition \ref{lemma 2.7}, $\tilde{\alpha} $ is contained in the $\lambda$-neighborhood of the piecewise geodesic path $\tilde{w}$ where $\lambda=\cosh^{-1}(\sqrt{2}) \lceil \log_{2} r \rceil$. It follows that $\alpha \subseteq \bar{N}_{\lambda}(w)$. 

Suppose that $Mar(g, \epsilon)\neq \emptyset$. It is closed and convex. Let $h=d(\tilde{\alpha}, Mar(g, \varepsilon))$. Choose  points $A\in \tilde{\alpha}, B\in Mar(g, \epsilon)$ such that $d(A, B)=h$ realizes the minimal distance between $\tilde{\alpha}$ and $Mar(g, \epsilon)$.  Let $F=\textup{Proj}_{Mar(g, \epsilon)}(x_{r})$.  Then we obtain a quadrilateral $[ABFx_{r}]$ with $\angle ABF=\angle BFx_{r} =\angle BAx_{r}\geq \pi /2$. By Corollary \ref{lemma 2.5}, 
$$d(B, F)\leq \sinh (d(B, F))\leq 1/ \sinh (h). $$


Take the point $D\in Mar(g, \epsilon)$ which is closest to $x_{0}$. By a similar argument, we have $d(B, D)\leq 1/ \sinh (h)$. Thus, $d(F, D)\leq 2/ \sinh (h)$. The projection 
$Proj_{A_{g}}$ is $\langle g \rangle$-equivariant, thus $F, D$ are identified by the isometry $g$. Hence
$$\epsilon = d(D, g(D))=d(D, F)\leq 2/ \sinh (h) $$
and  $h\leq \sinh^{-1}(2/\epsilon)$. 

If $Mar(g, \epsilon)=\emptyset$, then the translation length $l(g)\geq \epsilon$. Let $h=d(\tilde{\alpha}, A_g)$. Replacing $Mar(g, \epsilon)$ by $A_{g}$, we use a similar argument to obtain that 
$$d(A_g, \tilde{\alpha})\leq \sinh^{-1}(2/\epsilon).$$  Hence, 
$$d(w, \gamma\cup (Mar(g, \epsilon)/ \langle g\rangle ))\leq \cosh^{-1}(\sqrt{2})\lceil \log_{2} r \rceil+\sinh^{-1}(2/\epsilon).$$

\end{proof}

\begin {figure}[H]
\centering
\includegraphics[width=5.00in]{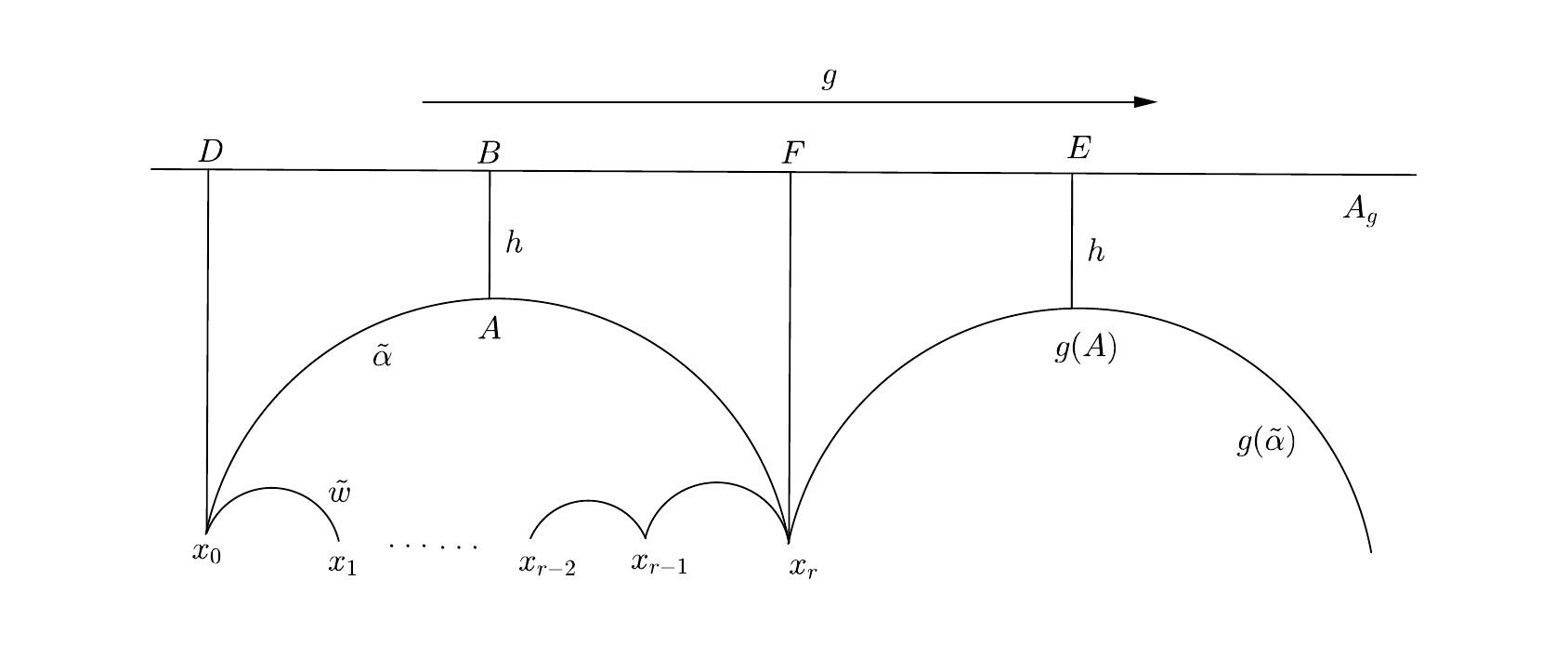}

\caption{  \label{loxnear}}
\end{figure}

\begin{corollary}
\label{free homotopic}
Under the conditions in Proposition \ref{free homotopic2}, if the translation length of $g$ satisfies that $l(g)\geq \epsilon>0$, then $\gamma$ is contained in the $C$-neighborhood of the loop  $w$ where $C=\cosh^{-1}(\sqrt{2})\lceil \log_{2} r \rceil+\sinh^{-1}(2/\epsilon)+2\delta$.

\end{corollary}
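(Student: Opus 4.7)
The plan is to apply Proposition~\ref{free homotopic2} to locate $w$ inside a tubular neighborhood of $\gamma$, and then use a projection argument in the universal cover to obtain the reverse inclusion. First, since $l(g)\geq\epsilon$, the convex displacement function $d_{g}(x)=d(x,g(x))$ attains its minimum value $l(g)\geq\epsilon$ precisely along $A_{g}$, so $Mar(g,\epsilon)$ is either empty or equals $A_{g}$; in either case, $Mar(g,\epsilon)/\langle g\rangle\subseteq\gamma$. Proposition~\ref{free homotopic2} therefore reduces to the inclusion $w\subseteq\bar{N}_{C'}(\gamma)$, where $C':=\cosh^{-1}(\sqrt{2})\lceil\log_{2}r\rceil+\sinh^{-1}(2/\epsilon)$. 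Lifting to $X$, the $\langle g\rangle$-invariant bi-infinite piecewise geodesic
$$\tilde{w}^{\infty}:=\bigcup_{i\in\mathbb{Z}}g^{i}(\tilde{w})$$
has its two ideal endpoints equal to the fixed points $\xi,\eta\in\geo X$ of $g$, which are also the ideal endpoints of $A_{g}$, and it satisfies $\tilde{w}^{\infty}\subseteq\bar{N}_{C'}(A_{g})$.

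Next, I would invoke the nearest-point projection $\pi_{0}:X\to A_{g}$, which is well-defined and $1$-Lipschitz since $A_{g}$ is a closed convex subset of the CAT($0$) space $X$. Parametrising $\tilde{w}^{\infty}$ continuously by $t\in\mathbb{R}$, one has $\tilde{w}^{\infty}(t)\to\xi$ visually as $t\to+\infty$ and $\tilde{w}^{\infty}(t)\to\eta$ as $t\to-\infty$. Because $d(\tilde{w}^{\infty}(t),\pi_{0}(\tilde{w}^{\infty}(t)))\leq C'$, the projected points cannot remain bounded and must converge to $\xi$ and $\eta$ respectively; otherwise one would have two sequences in $X$ at bounded distance from each other yet converging to distinct ideal points, which is impossible in a Hadamard manifold. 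Continuity of $\pi_{0}\circ\tilde{w}^{\infty}:\mathbb{R}\to A_{g}$ combined with the intermediate value theorem then guarantees that every $p\in A_{g}$ is of the form $p=\pi_{0}(q)$ for some $q\in\tilde{w}^{\infty}$, and for this $q$ we have $d(p,q)=d(q,A_{g})\leq C'$. Projecting down to $M$ gives $\gamma\subseteq\bar{N}_{C'}(w)\subseteq\bar{N}_{C}(w)$.

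The main subtle point is justifying that $\pi_{0}\circ\tilde{w}^{\infty}$ realises the correct ideal endpoints of $A_{g}$ in its two limits at $\pm\infty$; this reduces to the elementary fact that sequences converging to distinct points of $\geo X$ must diverge in distance, which contradicts the uniform $C'$-bound on $d(\tilde{w}^{\infty}(t),A_{g})$. I note that this projection argument actually yields the sharper bound $C'$ in place of $C=C'+2\delta$, so the additional $2\delta$ slack in the stated constant is not needed; presumably it appears in the author's formulation because of a slightly different $\delta$-hyperbolicity argument (e.g.\ via a thin-quadrilateral or Lemma~\ref{hyper} type estimate) which would otherwise introduce a factor of $2\delta$.
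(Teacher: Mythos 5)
There is a genuine gap, and it occurs at the very first step. You read Proposition \ref{free homotopic2} as giving the inclusion $w\subseteq\bar{N}_{C'}(\gamma)$, but the quantity $d(w,\gamma\cup (Mar(g,\epsilon)/\langle g\rangle))$ in that proposition is the \emph{minimal} distance between the two sets (see the convention $d(A,B)=\inf\{d(a,b)\}$ in Section \ref{sec:notation}); its proof only produces a single point of $\tilde\alpha$ within $\sinh^{-1}(2/\epsilon)$ of $Mar(g,\epsilon)$ (or of $A_g$). The inclusion you want is in fact false in general: take $r=1$ and let $w$ be the geodesic loop freely homotopic to $\gamma$ based at a point of $M$ at distance $R\gg C'$ from $\gamma$; its lift $\tilde{p}\,g(\tilde p)$ has endpoints at distance $R$ from $A_g$ even though its midpoint is within $\sinh^{-1}(2/\epsilon)$ of $A_g$. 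Consequently your claim $\tilde{w}^{\infty}\subseteq\bar{N}_{C'}(A_g)$ fails, and with it the whole projection argument: surjectivity of $\pi_0\circ\tilde{w}^{\infty}$ onto $A_g$ (which does hold, by your limit/IVT reasoning) only tells you that each $p\in A_g$ equals $\pi_0(q)$ for some $q\in\tilde{w}^{\infty}$, with no bound whatsoever on $d(p,q)=d(q,A_g)$. Your closing remark that the $2\delta$ is superfluous is wrong for the same reason.

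The paper's proof supplies exactly the idea you are missing. With $A\in\tilde\alpha$ and $B\in A_g$ realizing $h=d(\tilde\alpha,A_g)\le\sinh^{-1}(2/\epsilon)$, equivariance of the nearest-point projection gives $E=g(B)$ as the point of $A_g$ nearest to $g(A)\in g(\tilde\alpha)$, and the segment $BE$ is a fundamental domain for $\langle g\rangle$ acting on $A_g$, so $\pi(BE)=\gamma$. One then applies $\delta$-thinness to the quadrilateral $[ABEg(A)]$: every point of $BE$ is within $2\delta$ of $AB\cup A\,g(A)\cup g(A)E$, hence within $h+2\delta$ of the lifts of $\alpha$, and therefore within $h+2\delta+\cosh^{-1}(\sqrt2)\lceil\log_2 r\rceil$ of $w$. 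The two short perpendicular legs at \emph{both} ends of a fundamental segment, combined with hyperbolicity, are what control all of $\gamma$ at once; this is where the $2\delta$ genuinely enters. If you want to keep your projection framework, you would still have to insert this thin-quadrilateral step between consecutive translates $g^i(B)$, since the gaps between them have length $l(g)$, which is not bounded by the data of the corollary.
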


\begin{proof}
We use the same notations as in  proof of Proposition \ref{free homotopic2}. Let $E\in A_{g}$ be the nearest point to $g(A)$ as in Figure \ref{loxnear}. Then $\pi(BE)$ in $M=X/ \langle g \rangle$ is the geodesic loop $\gamma$ where $\pi$ is the covering projection. By $\delta$-hyperbolicity of $X$, $BE$ is within the $(h+2\delta)$-neighborhood of the lifts of $\alpha$ as in Figure \ref{loxnear}. Thus $\gamma$ is within the $( \sinh^{-1}(2/\epsilon)+2\delta)$-neighborhood of $\alpha$. Since $\alpha$ is contained in the $(\cosh^{-1}(\sqrt{2}) \lceil \log_{2} r \rceil )$-neighborhood of $w$, the loop $\gamma$ is contained in the $(\cosh^{-1}(\sqrt{2}) \lceil \log_{2} r \rceil+\sinh^{-1}(2/\epsilon)+2\delta)$-neighborhood of $w$. 

\end{proof}

Given $0<\varepsilon\leq  \varepsilon(n, \kappa)$ and a discrete  subgroup $\Gamma$, the set 
$T_{\varepsilon}(\Gamma)$ is a disjoint union of the subsets of the form $T_{\varepsilon}(G)$,  
where $G$ ranges over all maximal infinite elementary subgroups of $\Gamma$, \cite[Proposition 3.5.5]{Bo2}. If $G<\Gamma$ is a maximal parabolic subgroup, $T_{\varepsilon}(G)$ is precisely invariant and Stab$_{\Gamma}(T_{\varepsilon}(G))=G$, \cite[Corollary 3.5.6]{Bo2}. In this case, by abuse of notation, we regard $T_{\varepsilon}(G)/G$ as a subset of $M$, and call it a \emph{Margulis cusp}. Similarly, if $G<\Ga$ is a maximal loxodromic subgroup, $T_{\varepsilon}(G) / G$ is called a \emph{Margulis tube}. 

For the quotient orbifold $M=X/\Gamma$, set 
$$\textup{thin}_{\varepsilon}(M)=T_{\varepsilon}(\Gamma) / \Gamma.$$
This closed subset is the {\em thin part}\footnote{more precisely, $\varepsilon$-thin part}  of the quotient orbifold $M$. It is a disjoint union of its connected components, and  each such component has the form $T_{\varepsilon}(G)/ G$, where $G$ ranges over all maximal infinite elementary subgroups of $\Gamma$. 

The closure of the complement $M\setminus \textup{thin}_{\varepsilon}(M)$ is the {\em thick part} of $M$, denoted by $\textup{thick}_{\varepsilon}(M)$. Let $\textup{cusp}_{\varepsilon}(M)$ denote the union of all Margulis cusps of $M$;  it is called the \emph{cuspidal} part of $M$. The closure of the complement $M\setminus \textup{cusp}_{\varepsilon}(M)$ is denoted by $\textup{noncusp}_{\varepsilon}(M)$; it is called the \emph{noncuspidal} part of $M$. Observe that $\textup{cusp}_{\varepsilon}(M)\subseteq \textup{thin}_{\varepsilon}(M)$ and $\textup{thick}_{\varepsilon}(M)\subseteq \textup{noncusp}_{\varepsilon}(M)$. If $M $ is a manifold (i.e., $\Gamma$ is torsion-free), the $\varepsilon$-thin part is also the collection of all points $x\in M$ where the injectivity radius of $M$ at $x$ is no greater than $\varepsilon /2 $.

\section{Quasigeodesics}
\label{sec:quasigeodesics}

In this section, $X$ is a Hadamard manifold of sectional curvature $\leq -1$. We will prove that certain concatenations of geodesics in $X$ are uniform quasigeodesics, therefore, according to the Morse Lemma, are uniformly close to geodesics.

\begin{definition}
A map $q: I \to X$ defined on an interval $I\subset \R$ is called a $(\la,\alpha)$-quasigeodesic (for $\la\ge 1$ and $\alpha\ge 0$)  if 
$$
 \la^{-1} |s-t| - \alpha \le d(q(s), q(t))\le \la |s-t| + \alpha
$$
for all $s, t\in I$. 
\end{definition}

\begin{proposition}[Piecewise-geodesic paths with long edges] 
\label{piecewise geodesic path}
Define the function 
$$L(\theta)=2\cosh^{-1}\left(\dfrac{2}{\sin (\theta/2)}\right)+1.$$
Suppose that $\gamma=\gamma_{1}\ast \cdots \ast \gamma_{n}\subseteq \bar{X}$ is a piecewise geodesic path\footnote{parameterized by its arc-length}  
from $x$ to $y$ where each $\gamma_{i}$ is a geodesic  of length 
$\ge L=L(\theta)$ and the angles between adjacent arcs $\gamma_{i}$ and $\gamma_{i+1}$ 
are $\ge \theta>0$. Then $\gamma$ is a $(2L, 4L+1)$-quasigeodesic. 
\end{proposition}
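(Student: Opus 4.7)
The upper bound in the quasigeodesic inequality is immediate: since $\gamma$ is parameterized by arc length and is a concatenation of geodesic arcs, $d(\gamma(s), \gamma(t)) \leq |s-t|$ holds trivially, which is already stronger than $d(\gamma(s), \gamma(t)) \leq 2L|s-t| + (4L+1)$. The substance lies in the lower bound $d(\gamma(s), \gamma(t)) \geq |s-t|/(2L) - (4L+1)$. My plan is to reduce this first to a vertex-to-vertex estimate. Write $p_0, \ldots, p_n$ for the vertices of $\gamma$ and $\ell_i = \mathrm{length}(\gamma_i) \geq L = L(\theta)$. Given $\gamma(s) \in \gamma_i$ and $\gamma(t) \in \gamma_j$ with $i \leq j$, the triangle inequality together with the fact that each edge is already a geodesic reduces the problem, modulo an additive error absorbed into $4L+1$, to showing that $d(p_i, p_j) \geq \left(\sum_{k=i+1}^{j} \ell_k\right)/(2L) - 2L$ for all pairs of vertex indices. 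The case when $p_0$ or $p_n$ is ideal is handled by a continuity argument approximating ideal endpoints by sequences in $X$, as in Remarks 2.4 and 2.6.

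I plan to establish this vertex estimate by induction on $n$, with the base case $n=1$ being the identity $d(p_0, p_1) = \ell_1$. The inductive engine is a \emph{corner estimate}: for three consecutive vertices $p_{i-1}, p_i, p_{i+1}$ satisfying the hypotheses of the proposition, I claim that the foot of the perpendicular from $p_{i-1}$ to the geodesic line extending $\gamma_{i+1}$ lies within a bounded distance (depending only on $\theta$) of $p_i$, so that $d(p_{i-1}, p_{i+1}) \geq \ell_{i+1} + \ell_i - O(1)$. This corner estimate will be proved by constructing a quadrilateral whose two ``long'' sides are subsegments of $\gamma_i$ and $\gamma_{i+1}$ emanating from $p_i$ with length $\cosh^{-1}(2/\sin(\theta/2))$, whose ``short'' sides are perpendiculars to a hypersurface through $p_i$ bisecting the angle between the two edges, and then applying Corollary 2.5(2) with $\alpha = \theta/2$. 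The precise form $L(\theta) = 2\cosh^{-1}(2/\sin(\theta/2))+1$ is tuned exactly for this application: the inner factor $2/\sin(\theta/2)$ guarantees the strict hypothesis $\cosh d(A,B)\sin\alpha > 1$ of Corollary 2.5(2); the outer factor of $2$ accounts for applying the inequality once on each side of the bisecting hypersurface; and the $+1$ provides a uniform strict-inequality margin.

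The main obstacle is chaining the local corner estimates into a global bound on $d(p_0, p_n)$. The naive induction would give $d(p_0, p_n) \geq d(p_0, p_{n-1}) + \ell_n - C$ for a uniform constant $C$, which is enough for a linear-in-$n$ lower bound. The subtlety is that at the $n$-th step, the direction from $p_0$ toward $p_{n-1}$ is a priori unrelated to the direction of $\gamma_{n-1}$, so applying the corner estimate with $p_{n-2}$ replaced by $p_0$ requires bounding the angle at $p_{n-1}$ between the geodesic $[p_0, p_{n-1}]$ and the edge $\gamma_{n-1}$. I plan to handle this by exploiting the length condition $\ell_{n-1} \geq L$: since sectional curvatures are $\leq -1$, geodesics diverge exponentially, so along a long edge the direction of approach rapidly aligns, and the effective angle at $p_{n-1}$ remains close to the hypothesized $\theta$. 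Quantifying this via the angle comparison Proposition \ref{lemma 2.3} allows the induction to close, yielding the vertex bound $d(p_0, p_n) \geq \left(\sum_k \ell_k\right)/(2L) - 2L$ and hence the proposition.
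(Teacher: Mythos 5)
Your overall strategy --- an induction on the vertices driven by a corner estimate $d(p_{i-1},p_{i+1}) \ge \ell_i + \ell_{i+1} - O(1)$ --- is genuinely different from the paper's argument. The paper never chains local estimates: after subdividing every edge into pieces of length in $[L,2L)$, it shows that the perpendicular bisectors of consecutive pieces are pairwise disjoint separating hypersurfaces at mutual distance at least $1$ (this is where Corollaries \ref{lemma 2.2} and \ref{lemma 2.5} and the specific value of $L(\theta)$ are used), so the geodesic from $a$ to $b$ must cross each intermediate bisector in order and hence $d(a,b)$ is at least the number of bisectors crossed. This yields $d(a,b)\ge \frac{1}{2L}\,\mathrm{length}(\gamma|_{[t_a,t_b]})-1$ in one stroke, with no induction and no angle propagation. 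Your route is the classical local-to-global argument and can in principle be made to work, but as written it has a genuine gap exactly at the step you yourself flag as the main obstacle.

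The gap is the closure of the induction. To apply the corner estimate at $p_{n-1}$ with $p_{n-2}$ replaced by $p_0$ you must bound from below the angle at $p_{n-1}$ between $[p_0,p_{n-1}]$ and $\gamma_n$, and the justification you offer --- that $[p_0,p_{n-1}]$ ``rapidly aligns'' with $\gamma_{n-1}$ because geodesics diverge exponentially --- is circular: to know that $[p_0,p_{n-1}]$ fellow-travels the last edge $\gamma_{n-1}$ for a definite length you already need to know that $\gamma_1\ast\cdots\ast\gamma_{n-1}$ is a quasigeodesic with constants independent of $n$ (or at least the very distance inequality being proved at this step), and the Morse constant of a $(2L,4L+1)$-quasigeodesic is much larger than $L$, so ``the terminal portion of length $L$ stays close to $\gamma_{n-1}$'' does not follow from it anyway. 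The standard repair is to strengthen the inductive hypothesis to carry a quantitative angle bound (for instance, that the angle at $p_k$ between $[p_0,p_k]$ and the incoming direction of $\gamma_k$ is at most $\theta/2$) and to verify, via Proposition \ref{lemma 2.3} and hyperbolic trigonometry, that the corner estimate reproduces this bound at $p_{k+1}$; one must then also check that the particular $L(\theta)$ in the statement is large enough for this self-improvement, which is not evident since that constant is tuned for the bisector argument. A secondary issue: your reduction of general points $\gamma(s),\gamma(t)$ to vertices ``modulo an additive error absorbed into $4L+1$'' fails when edges are much longer than $L$, since an interior point of a long edge can be far from both of its endpoints; the paper handles this by the subdivision into pieces of length in $[L,2L)$, and your argument needs the same preliminary step.
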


\begin{proof}
Recall that $\textup{Bis}(x_{i}, x_{i+1})$ denotes the perpendicular  bisector of $\gamma_{i}=  x_{i} x_{i+1}$ where $x_{1}=x$ and $x_{n+1}=y$. We claim that the consecutive bisectors are at least unit distance apart:
\begin{equation}\label{eq:unit-distance} 
d(\textup{Bis}(x_{i}, x_{i-1}), \textup{Bis}(x_{i}, x_{i+1}))\ge 1. 
\end{equation}

 If the closures in $\bar{X}$ of the bisectors $\textup{Bis}(x_{i}, x_{i+1})$ and $\textup{Bis}(x_{i+1}, x_{i+2})$ intersect  each other, then we have a quadrilateral $[ABCD]$ with $ \angle DAB=\angle DCB=\pi /2$ as in Figure 5(a), where $B\in \bar{X}$. Connecting $D, B$ by a geodesic segment (or a ray), we get two right triangles $[ADB]$ and $[BCD]$, and one of the angles $\angle ADB, \angle CDB$ is  $\ge \theta /2$. Without loss of generality, we can assume that $\angle ADB \geq \theta /2$. By Corollary \ref{lemma 2.2} and Remark \ref{boundary inequality 1}, $ \cosh (d(A, D))\sin \angle ADB \leq 1$.  However, we know that 
 $$
\cosh (d(A, D))\sin (\angle ADB)\geq \cosh(L/2) \sin (\theta/2)>1,
$$
 which is a contradiction. Thus, the closures of $\textup{Bis}(x_{i}, x_{i+1})$ and $\textup{Bis}(x_{i+1}, x_{i+2})$ are disjoint.

\begin {figure}[H]
\centering
\includegraphics[width=6.00in]{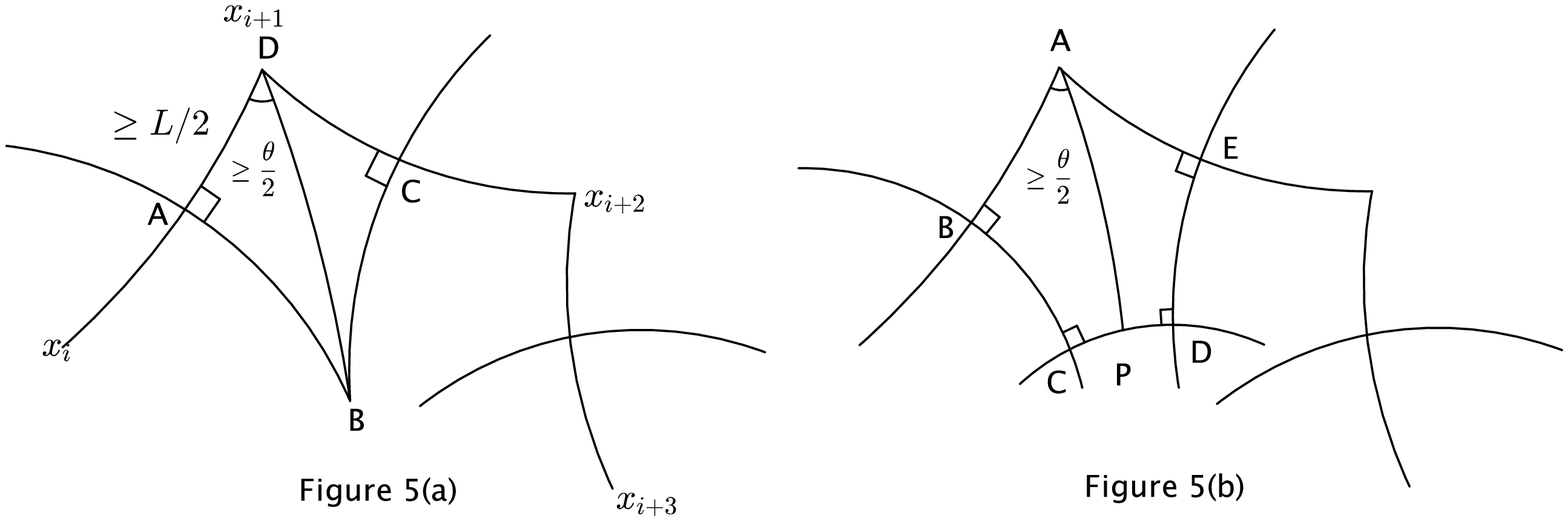}

\caption{}
\end{figure}

Let $C\in \textup{Bis}(x_{i}, x_{i+1}), D\in \textup{Bis}(x_{i+1}, x_{i+2})$ denote points (not necessarily unique) such that $d(C,D)$ is  the minimal distance between these perpendicular bisectors. Since  $CB\subset 
\textup{Bis}(x_{i}, x_{i+1})$, $DE\subset \textup{Bis}(x_{i+1}, x_{i+2})$, it follows that the segment $CD$ is orthogonal to both $CB$ and $DE$. The segment $CD$ lies on a unique (up to reparameterization) bi-infinite geodesic $\xi\eta$. Then $A\in \mathscr{N}_{P}(\xi, \eta)$ for some point $P\in \xi\eta$. We claim that $P\in CD$. Otherwise, we  obtain a triangle in $X$ with two right angles, which is a contradiction. Hence, the geodesic $AP\subseteq \mathscr{N}_{P}(C, D)$ and $AP$ is orthogonal to $CD$ as in Figure 5(b). We get two quadrilaterals $[ABCP]$ and $[APDE]$. Without loss of generality, assume that $\angle BAP\geq \theta/2$. By Corollary \ref{lemma 2.5}, 
$$\cosh(d(C, D))\geq \cosh(d(C,P)) \geq \cosh(L/2)\sin (\theta/2)  > 2 > \cosh(1). $$
Hence, $d(C, D)> 1$. This implies the inequality \eqref{eq:unit-distance}.

We now prove that the  path $\gamma$ is quasigeodesic. For each $i$, if $ d(x_{i}, x_{i+1})\geq 2L$, take the point $y_{i1}\in \gamma_{i}$ such that $d(x_{i}, y_{i1})=L$. If $L\leq d(y_{i1}, x_{i+1})< 2L$, we stop. Otherwise, take the  point $y_{i2}\in \gamma_{i}$ such that $d(y_{i1}, y_{i2})=L$. If $d(y_{i2}, x_{i+1})\geq 2L$, we continue the process until we get $y_{ij}$ such that $L\leq d(y_{ij}, x_{i+1})< 2L$. Thus we get a new partition of the piecewise geodesic path $\gamma$:
$$\gamma=\gamma '_{1}\ast \cdots \ast \gamma '_{n'}$$
 such that for each $i$, $L\le length(\gamma'_i)< 2L$, and consecutive 
 geodesic arcs $\gamma '_{i}$ and $\gamma '_{i+1}$ meet either at the angle $\pi$ or, at least, at the angle $\ge \theta$. See Figure 6(a).

In order to prove that $\gamma$ is $(\lambda,\epsilon)$-quasigeodesic, with $\la\ge 1$ and $\epsilon\ge 0$, we need to verify the inequality
$$
\dfrac{1}{\lambda} length (\gamma |_{[t_{a},t_{b}]})-\epsilon \leq d(a,b) \leq \lambda \cdot length (\gamma |_{[t_{a},t_{b}]})+\epsilon$$
for all pairs of points $a, b\in \gamma$, where $\gamma(t_{a})=a$ and $\gamma(t_{b})=b$. 
The upper bound (for arbitrary $\la\ge 1$ and $\epsilon\ge 0$) 
follows from the triangle inequality and we only need to establish the lower bound. 

The main case to consider is when  $a, b $ are both terminal endpoints of some geodesic pieces  
$\gamma '_{i}, \gamma '_{j}$ of $\ga$; see Figure 6(b). 
The  bisectors of the geodesic segments of $\gamma$ divide $ab$ into several pieces, 
and, by \eqref{eq:unit-distance}, each piece has  length  $\ge 1$. At the same time, each arc $\ga_k'$ of $\ga$ has length $< 2L$. 
Thus, $d(a,b)\ge |j-i|$, while 
$$
2L |j-i|+2L> length (\gamma |_{[t_{a}, t_{b}]}).$$
We obtain:
$$
d(a, b)\ge \frac{1}{2L} length (\gamma |_{[t_{a}, t_{b}]})-1.
$$
Lastly, general points $a\in \ga'_i, b\in \ga'_j$ are within distance $< 2L$ from the terminal endpoints $a', b'$ of these segments. Hence,
\begin{multline*}
d(a, b)\ge d(a', b') - 4L \ge \frac{1}{2L} length (\gamma |_{[t_{a'}, t_{b'}]})-1 -4L \ge
\frac{1}{2L} length (\gamma |_{[t_{a}, t_{b}]})-1 -4L\\ 
=\frac{1}{2L} length (\gamma |_{[t_{a}, t_{b}]}) - (4L +1).  
\end{multline*} 
Therefore, $\ga$ is a $(2L, 4L+1)$-quasigeodesic. 
\end{proof}

\begin {figure}[H]
\centering
\includegraphics[width=5.5in]{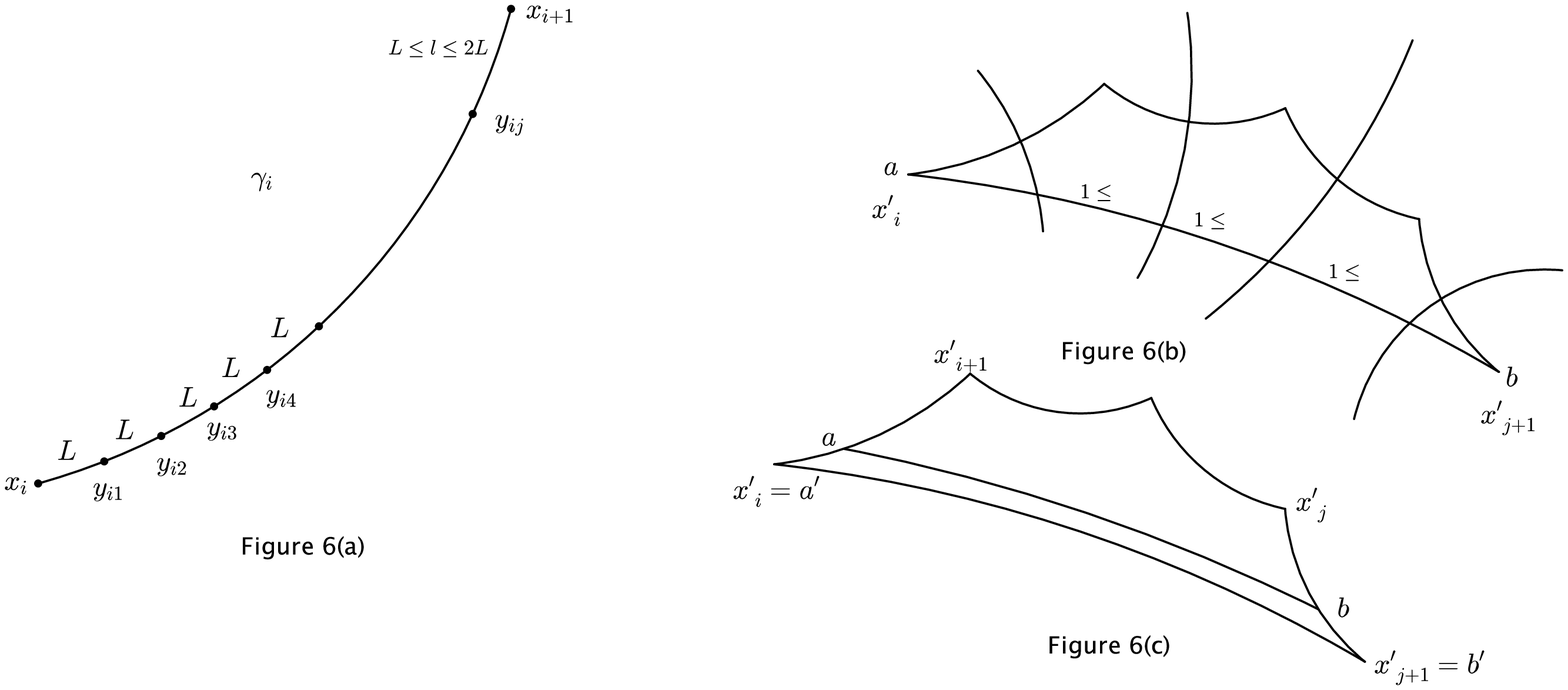}
\caption{}
\end{figure}

\begin{proposition}[Piecewise-geodesic paths with long and short edges] 
\label{qua}
Define the function 
$$L(\theta, \varepsilon)= 2\cosh^{-1}\left( \dfrac{ e^{2}+1}{2\sin (\alpha/2)}\right)+1$$
where $\alpha=\min\{ \theta, \pi/2-\arcsin(1/ \cosh \varepsilon)\}.$

Suppose that $\gamma=\gamma_{1}\ast \cdots \ast \gamma_{n}\subseteq \bar{X}$ is a piecewise geodesic path from $x$ to $y$ such that:
\begin{enumerate}
\item Each geodesic arc $\gamma_{j}$ has length either at least $\varepsilon>0$ or at least $L=L(\theta, \varepsilon)$. 

\item 
If $\gamma_{j}$ has length $<L$, then the adjacent geodesic arcs $\gamma_{j-1}$ and $\gamma_{j+1}$ have lengths at least $L$ and  $\gamma_{j}$ meets $\gamma_{j-1}$ and $\gamma_{j+1}$ at angles 
 $\geq \pi /2$.

\item
Other adjacent geodesic arcs meet at an angle $\geq \theta$. 

\end{enumerate}
Then $\gamma$ is a (2L, 4L+3)-quasigeodesic. 
\end{proposition}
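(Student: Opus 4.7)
My plan is to adapt the perpendicular-bisector strategy of Proposition \ref{piecewise geodesic path} to accommodate the short edges of length in $[\varepsilon, L)$. The short edges contribute an ``effective bending angle'' of at least $\pi/2 - \arcsin(1/\cosh\varepsilon)$ to the configuration, which is the origin of the second term in the definition of $\alpha$. Once this effective angle is identified, the rest of the argument runs in parallel with Proposition \ref{piecewise geodesic path}.

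First, I subdivide each long edge (length $\geq L$) into consecutive subarcs of length in $[L, 2L)$, while leaving the short edges intact. This yields a refined partition $\gamma = \gamma_1' \ast \cdots \ast \gamma_{n'}'$ in which consecutive pieces meet at one of three types of angles: $\pi$ (inside a subdivided long edge), $\geq \theta$ (at an original long-to-long junction), or $\geq \pi/2$ (at a junction involving a short edge). Every short piece is sandwiched between two long pieces and meets each at a right angle.

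The central claim is that the perpendicular bisectors of consecutive long pieces, even when separated by one short piece, are at distance at least $1$. For the first two angle types the argument of Proposition \ref{piecewise geodesic path} applies, using $\alpha \leq \theta$. For the new case, let long pieces $\gamma_{i-1}', \gamma_{i+1}'$ be separated by a short piece $\gamma_i'$ with endpoints $x_i', x_{i+1}'$ satisfying $d(x_i', x_{i+1}') \geq \varepsilon$. Suppose for contradiction that the two bisectors meet at a point $P$ (the non-intersecting close-approach case is handled by the same closest-points construction used in Proposition \ref{piecewise geodesic path}); let $A, C$ be the midpoints of $\gamma_{i-1}', \gamma_{i+1}'$ respectively. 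This produces a geodesic pentagon $P A x_i' x_{i+1}' C P$ with right angles at $A, x_i', x_{i+1}', C$. Passing to a comparison pentagon in $\H^2$ via Proposition \ref{lemma 2.3} (the comparison angles remain $\geq \pi/2$), I triangulate along the diagonal $A x_{i+1}'$. The resulting right triangle $A x_i' x_{i+1}'$ has legs $d(A, x_i') \geq L/2$ and $d(x_i', x_{i+1}') \geq \varepsilon$, and Corollary \ref{lemma 2.2} yields $\cosh(\varepsilon)\sin\beta \leq 1$ for its angle $\beta$ at $x_{i+1}'$, i.e., $\beta \leq \arcsin(1/\cosh\varepsilon)$. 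Hence the residual quadrilateral $P A x_{i+1}' C$ has a right angle at $A$, an effective angle $\pi/2 - \beta \geq \alpha$ at $x_{i+1}'$, a right angle at $C$, some angle at $P$, and side $d(A, x_{i+1}') \geq L/2$. Applying the quadrilateral estimate of Proposition \ref{piecewise geodesic path} (an adaptation of Corollary \ref{lemma 2.5}(2)) to this quadrilateral with $\alpha$ playing the role of $\theta/2$, the inequality $\cosh(L/2)\sin(\alpha/2) > (e^2+1)/2 > \cosh(1)$ built into $L(\theta,\varepsilon)$ drives a contradiction and upgrades ``do not meet'' to ``are at distance $\geq 1$''.

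Once the bisector separation is established, the quasigeodesic inequality follows as in Proposition \ref{piecewise geodesic path}, with the modification that between two consecutive bisectors the path length may now be at most $2L + L = 3L$ (to account for one possible intervening short piece). The additive constant in the quasigeodesic estimate correspondingly increases from $4L+1$ to $4L+3$. The main obstacle is the application of the quadrilateral estimate in the new case, where the residual quadrilateral $P A x_{i+1}' C$ has only two exact right angles rather than three; one must re-derive the relevant Corollary \ref{lemma 2.5}-type estimate for this configuration (handling the angle $\pi/2 - \beta$ at $x_{i+1}'$), and the enlarged constant $(e^2+1)/2$ in the definition of $L(\theta,\varepsilon)$ provides the extra slack needed to absorb both the angle reduction and the short-edge contribution to the intervening path length.
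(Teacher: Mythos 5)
You have isolated the right quantitative ingredient (Corollary \ref{lemma 2.2} applied to a triangle with an angle $\geq \pi/2$ and a leg of length $\geq\varepsilon$, producing the angle defect $\arcsin(1/\cosh\varepsilon)$), but the geometric reduction you build around it does not close. After you cut the comparison pentagon along the diagonal $Ax_{i+1}'$, the residual quadrilateral $[PAx_{i+1}'C]$ does \emph{not} have a right angle at $A$: that angle is the pentagon's angle at $A$ minus $\angle x_i'Ax_{i+1}'$, hence strictly less than what you started with, so Corollary \ref{lemma 2.5}(2) (which needs three angles $\geq\pi/2$) does not apply, and the closest-point/perpendicular-foot configuration used in Proposition \ref{piecewise geodesic path} has not been set up for this pentagon. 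You acknowledge this at the end (``one must re-derive the relevant Corollary \ref{lemma 2.5}-type estimate''), but that re-derivation \emph{is} the content of the step, so the key claim --- separation of the bisectors across a short edge --- is not actually established. The paper's proof sidesteps the pentagon entirely: for a short arc $\gamma_j=x_jx_{j+1}$ it replaces the broken path $\gamma_{j-1}\ast\gamma_j$ by the single chord $x_{j-1}x_{j+1}$, notes via Proposition \ref{lemma 2.3} that $d(x_{j-1},x_{j+1})\geq d(x_{j-1},x_j)\geq L$ and via Corollary \ref{lemma 2.2} that this chord meets $\gamma_{j+1}$ at angle $\geq\pi/2-\arcsin(1/\cosh\varepsilon)\geq\alpha$, and then quotes the already-proved two-long-arcs argument verbatim for the pair $x_{j-1}x_{j+1}$, $x_{j+1}x_{j+2}$. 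That substitution is the missing idea; with it there is nothing new to prove about pentagons or about quadrilaterals with inexact right angles.

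There is also a bookkeeping problem with your constants. You only claim that the bisectors flanking a short edge are at distance $\geq 1$, while the path between them can have length up to $3L$ (half a long piece, a short piece, half a long piece); that yields at best a $(3L,\cdot)$-quasigeodesic, not $(2L,4L+3)$. The enlarged constant $(e^2+1)/2$ in $L(\theta,\varepsilon)$ is there precisely because $(e^2+1)/2>\cosh 2$, so the comparison inequality gives distance $>2$ across a short edge; it is this factor of $2$, combined with the paper's count (discard the bisector of the long arc preceding each short arc, use the bisector of the chord $x_{k-1}x_{k+1}$ instead, and obtain $d(a,b)\geq j-i-2$), that restores the multiplicative constant $2L$ and produces the additive constant $4L+3$. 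You notice the enlarged constant but attribute it only to ``extra slack'' and never extract the distance-$2$ conclusion, so the stated constants do not follow from what you prove.
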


\begin{proof}
We call an arc $\gamma_j$ \emph{long} if its length is $\geq L$ and \emph{short} otherwise. Notice that $\gamma$ contains no consecutive short arcs. Unlike the proof of Proposition \ref{piecewise geodesic path}, we cannot claim that the bisectors of 
consecutive arcs of $\ga$ are unit distance apart (or even disjoint). Observe, however, that by 
the same proof as in Proposition \ref{piecewise geodesic path}, the bisectors of 
 every consecutive pair $\ga_j, \ga_{j+1}$ of {\em long} arcs are 
at least unit distance apart. 

Consider, therefore, short arcs. Suppose that $\gamma_j=x_{j}x_{j+1}$ is a short arc. Then $\gamma_{j-1}=x_{j-1}x_{j}$ and $\gamma_{j+1}=x_{j+1}x_{j+2}$ are long arcs. Consider the geodesic $x_{j-1}x_{j+1}$ and the triangle $[x_{j-1}x_{j}x_{j+1}]$. By Proposition \ref{lemma 2.3}, $d(x_{j-1}, x_{j+1})\geq d(x_{j-1}, x_{j})\geq L$ and by Corollary \ref{lemma 2.2}, 
$$\cosh \varepsilon \sin \angle x_{j}x_{j+1}x_{j-1}\leq \cosh (d(x_{j}, x_{j+1}))\sin \angle x_{j}x_{j+1}x_{j-1}\leq 1. $$
Hence, 
$$\angle x_{j}x_{j+1}x_{j-1}\leq\arcsin\left(\dfrac{1}{\cosh \varepsilon}\right), $$
and 
$$\angle x_{j-1}x_{j+1}x_{j+2}\geq \dfrac{\pi}{2}-\arcsin\left(\dfrac{1}{\cosh \varepsilon}\right).$$
By a similar argument to the one of Proposition \ref{piecewise geodesic path}, the bisectors of the arcs $x_{j-1}x_{j+1}$ and $x_{j+1}x_{j+2}$ are at least distance 2 apart, see Figure \ref{general piecewise}. 

\begin {figure}[H]
\centering
\includegraphics[width=2.5in]{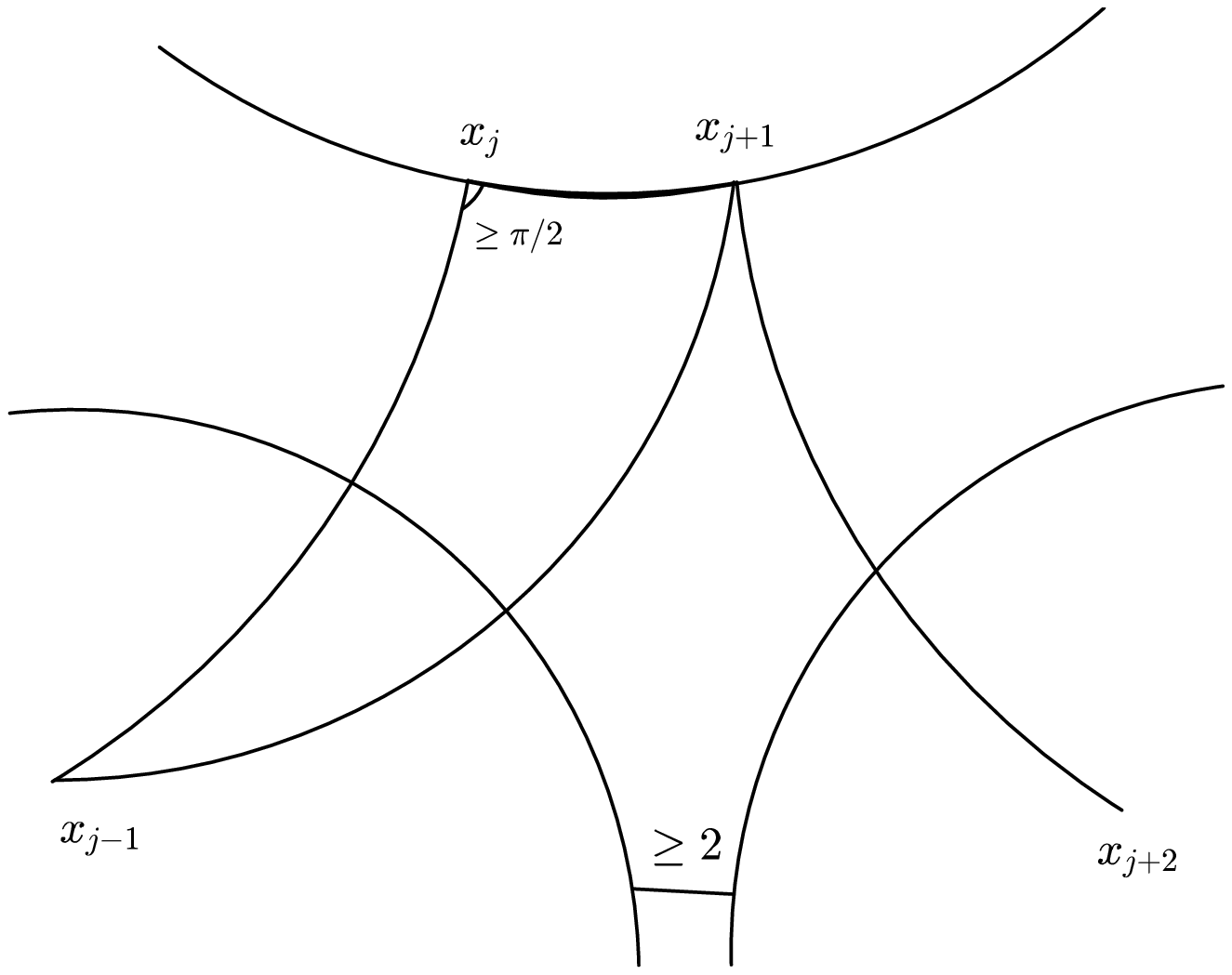}

\caption{ \label{general piecewise}}
\end{figure}

\medskip 
We now prove that $\ga$ is a $(2L, 4L+3)$-quasigeodesic. By the same argument as in Proposition \ref{piecewise geodesic path}, we can assume that all  
 long arcs of $\ga$ are  shorter than $2L$ (and short arcs, are, of course, shorter than $L$).

 As in the proof of Proposition \ref{piecewise geodesic path}, we first 
 suppose that points $a=\gamma(t_{a}), b=\gamma({t_{b}})$ in $\gamma$  are terminal points of  arcs $\ga_i, \ga_j$, $i<j$. Consider  bisectors of $x_{k-1}x_{k+1}$ for short arcs $\gamma_{k}$ in  $\gamma \mid_{[t_{a}, t_{b}]}$ and bisectors of the remaining long arcs except $\gamma_{k-1}$.  They  divide $ab$  into several segments, each of which has length at least 2. By adding these lengths together, we obtain the inequality 
 $$
 d(a, b)\ge j - i- 2,
 $$
 while 
 $$
2(j-i+1)L \ge length (\gamma \mid_{[t_{a}, t_{b}]}).$$
Putting these inequalities together, we obtain
$$
d(a, b)\ge \frac{1}{2L} length (\gamma \mid_{[t_{a}, t_{b}]}) -3.  
$$ 
Lastly, for general points $a, b$ in $\ga$, choosing $a', b'$ 
as in the proof of Proposition \ref{piecewise geodesic path}, we get:  
\begin{multline*}
d(a, b)\ge d(a', b') - 4L \ge \frac{1}{2L} length (\gamma |_{[t_{a'}, t_{b'}]}) -4L -3 \ge
\frac{1}{2L} length (\gamma |_{[t_{a}, t_{b}]})  -4L -3 \\ 
=\frac{1}{2L} length (\gamma |_{[t_{a}, t_{b}]}) - (4L +3).  
\end{multline*} 
Thus, $\gamma$ is a $(2L, 4L+3)$-quasigeodesic. 
\end{proof}

\begin{remark}
By the Morse Lemma, the Hausdorff distance between the quasigeodesic path $\gamma$ and $xy$ is at most  $C=C(L)$, \cite[Lemma 9.38, Lemma 9.80]{KD}.
\end{remark}

\section{Loxodromic products}
\label{sec:loxodromic}

In order to prove our  generalization of  Bonahon's theorem for torsion-free groups, we need to  construct a loxodromic element with  uniformly bounded word length in $\langle f, g \rangle$ where $f, g$ are two parabolic isometries  generating a discrete nonelementary subgroup of $\Isom(X)$. To deal with the case of general  discrete subgroups, possibly containing elliptic elements, 
we also need to extend this result to pairs of  elliptic isometries $g_1, g_2$.

\medskip
We first consider discrete subgroups generated by parabolic isometries. Our goal is to prove Theorem \ref{proposition 3.16}. For the proof of this theorem we will need several technical results.

\begin{lemma}\cite[Theorem $\Sigma_{m}$]{LM}
\label{nonempty intersection}
Let $F=\lbrace A_{1}, A_{2}, \cdots, A_{m} \rbrace$ be a family of open subsets of an $n$-dimensional  topological space $X$. If for every subfamily $F'$ of size $j$ where $1 \leq j \leq n+2$, the intersection $\cap F'$ is nonempty and contractible, then the intersection $\cap F$ is nonempty. 
\end{lemma}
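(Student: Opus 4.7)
This is a topological Helly-type theorem, and the natural line of attack is an induction on $m$ combined with the nerve theorem. For $m\le n+2$ the statement is immediate from the hypothesis applied with $F'=F$, so I would assume $m\ge n+3$ and the result for $m-1$. Because every subfamily of $F$ of size $m-1$ still satisfies the hypothesis (the condition only concerns subfamilies of size at most $n+2$, which are inherited), induction forces every $(m-1)$-fold intersection to be nonempty. Hence the nerve $N$ of $F$, the abstract simplicial complex on $\{1,\dots,m\}$ whose simplices are the subfamilies with nonempty intersection, contains the entire $(m-2)$-skeleton of $\Delta^{m-1}$. Assuming for contradiction that $\bigcap F=\emptyset$, the only missing face is the top one, and $N=\partial\Delta^{m-1}\cong S^{m-2}$, carrying a nontrivial class in $\tilde H^{m-2}(N)$.

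The second step is to invoke a partial form of the Borsuk--Leray nerve theorem: contractibility of every intersection of at most $n+2$ members of $F$ is precisely the ``good cover up to order $n+2$'' hypothesis which makes the \v{C}ech--to--singular spectral sequence with $E_1^{p,q}=\bigoplus_{|\sigma|=p+1}H^q(A_\sigma)$ degenerate in the relevant range. The output is an isomorphism $H^i(N)\cong \check H^i(Y)$ in low degrees (and a surjection one step further), where $Y:=\bigcup F$; concretely, the natural simplicial map $Y\to |N|$ built via a partition of unity is a cohomology equivalence through degree $\sim n+1$.

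The third step is the dimension bound: since $Y$ is open in the $n$-dimensional topological space $X$, the \v{C}ech cohomology $\check H^i(Y)$ vanishes for $i>n$. Combined with the nonzero class in $\tilde H^{m-2}(N)$, where $m-2\ge n+1>n$, this produces the desired contradiction, so $\bigcap F\ne\emptyset$.

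The main technical obstacle is to align the two ranges: the degree in which the partial nerve theorem yields a cohomology isomorphism, and the degree $m-2$ in which $N\cong S^{m-2}$ supports a nontrivial class. The size bound $n+2$ in the hypothesis (rather than the naive $n+1$ of the convex Helly theorem) is exactly what provides the extra degree of slack needed for the spectral-sequence comparison to push the nontrivial cohomology of $N$ one step beyond the covering dimension of $X$, where it is then killed by the dimension bound. Closing the induction therefore hinges on verifying this one-degree-of-slack calculation carefully.
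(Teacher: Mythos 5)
Your overall strategy is the same as the paper's: reduce to a family whose nerve is $\partial\Delta^{m-1}\simeq S^{m-2}$, transport the nontrivial class in degree $m-2$ from the nerve to the union $Y=\bigcup F$ via a nerve theorem, and contradict the vanishing of (co)homology of $Y$ above degree $n$. (The paper phrases the reduction via a subfamily of minimal size $k$ with empty intersection rather than by induction on $m$, and invokes the homotopy-equivalence form of the nerve theorem from Hatcher together with singular homology; these differences are cosmetic.) Your replacement of the full nerve theorem by a spectral-sequence comparison is in fact more honest than the paper's appeal to Hatcher's Corollary 4G.3, since that corollary requires \emph{all} nonempty finite intersections to be contractible, whereas the hypothesis only controls intersections of at most $n+2$ members.

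However, the ``range alignment'' that you yourself flag as the main obstacle does not close, and this is a genuine gap. The hypothesis kills $E_1^{p,q}$ for $q\ge 1$ only when $p+1\le n+2$, so the comparison between $H^p(N)$ and $\check H^p(Y)$ (with injectivity or surjectivity at the edge) is available only for $p\le n+1$. The obstruction class of $N=\partial\Delta^{m-1}$ lives in degree $m-2$, and $m-2\le n+1$ only when $m\le n+3$; for $m\ge n+4$ the class sits beyond the range the partial nerve theorem reaches, and there is no dimension-theoretic reason for it to die. Your induction on $m$ does not repair this: the inductive hypothesis is used solely to show that the $(m-1)$-fold intersections are nonempty, which pins down the nerve but does not lower the degree of the obstruction; and one cannot instead induct by intersecting everything with $A_m$, because the resulting family of $m-1$ open sets in $A_m$ need not satisfy the contractibility hypothesis for its subfamilies of size exactly $n+2$. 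So your argument is complete only when the minimal empty subfamily has size exactly $n+3$ (in which case every proper subintersection is contractible and everything is clean). The paper's own alternative sketch shares this difficulty --- it silently assumes contractibility of the intermediate intersections of sizes $n+3,\dots,k-1$ --- and ultimately the statement rests on the cited reference of Montejano, whose proof does genuinely more work precisely to exclude larger minimal sizes.
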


\begin{proof}
This lemma is a special case of the topological Helly theorem \cite{LM}. Here we give another proof of the lemma. 
Suppose $k$ is the smallest integer such that there exists a subfamily $F'=\lbrace A_{i(1)}, A_{i(2)}, \cdots, A_{i(k)} \rbrace$ of size $k$ with empty  intersection $\cap F'= \emptyset$. By the assumption, $k\geq n+3$.  Then 
$$
U:=\bigcup_{1\le j\le k} A_{i(j)}
$$ 
is homotopy equivalent to the nerve $N(F')$ \cite[Corollary 4G.3]{AT}, which, in turn, is homotopy equivalent to $S^{k-2}$. Then $H_{k-2}(S^{k-2})\cong H_{k-2}(U)\cong \Z$, which is a contradiction since $k-2\geq n+1$ and $X$ has dimension $n$. 

\end{proof}

\begin{proposition}
\label{Helly}
Let $X$ be a $\delta$-hyperbolic $n$-dimensional Hadamard space. 
Suppose that $B_{1},  \cdots,B_{k}$ are convex subsets of $X$ such that $B_{i}\cap B_{j}\neq \emptyset$ for all $i$ and $j$. Then there is a point $x\in X$ such that $d(x, B_{i})\leq n \delta$ for all $i=1,...,k$. 
\end{proposition}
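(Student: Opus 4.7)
My plan is to combine the topological Helly theorem of Lemma~\ref{nonempty intersection} with an inductive coarse-Helly estimate for small subfamilies. The dimension $n$ enters only through Helly; the factor $\delta$ accumulates from iterated applications of the $\delta$-thinness of geodesic triangles in $X$.

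The technical core is the following sub-claim, proved by induction on $j$: for every $2\le j\le n+2$ and every $j$-tuple of convex sets $C_1,\ldots,C_j\subset X$ with pairwise nonempty intersections, there is a point $x\in X$ with $d(x,C_i)\le (j-2)\delta$ for all $i$. The base case $j=2$ is trivial. For $j=3$, pick $p_{ab}\in C_a\cap C_b$; the three edges of $[p_{12}p_{13}p_{23}]$ lie in $C_1,C_2,C_3$ respectively, and by $\delta$-thinness there is a point within $\delta$ of every edge. For the step $j\to j+1$, apply the inductive hypothesis to each of the $j+1$ sub-collections of size $j$ to obtain points $q_1,\ldots,q_{j+1}$ with $d(q_l,C_i)\le(j-2)\delta$ whenever $i\neq l$. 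Select any three of them, say $q_1,q_2,q_3$. Since $X$ is CAT$(0)$ and $d(\,\cdot\,,C_i)$ is convex, the geodesic triangle $\Delta=[q_1q_2q_3]$ lies in $\bar{N}_{(j-2)\delta}(C_i)$ for every $i\ge 4$, while the edge $[q_{l'}q_{l''}]$ opposite $q_l$ lies in $\bar{N}_{(j-2)\delta}(C_l)$ for each $l\in\{1,2,3\}$ (since both endpoints are within $(j-2)\delta$ of $C_l$ because $l\notin\{l',l''\}$). The $\delta$-thinness of $\Delta$ supplies a point $x\in\Delta$ within $\delta$ of every edge, yielding $d(x,C_i)\le(j-2)\delta+\delta=(j-1)\delta$ for all $i$ and closing the induction.

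Having proved the sub-claim, I would fix a basepoint $x_0\in X$ and a radius $R$ large enough that the compact convex ball $K=\bar{B}(x_0,R)$ contains every pairwise intersection point $p_{ab}\in B_a\cap B_b$; by convexity of $K$ all auxiliary points produced in the sub-claim then stay inside $K$. For each $\varepsilon>0$, the sets $U_i:=\{x\in\operatorname{int}(K)\mid d(x,B_i)<n\delta+\varepsilon\}$ are open and convex, every intersection of them is convex hence contractible when nonempty, and the sub-claim produces a point in any intersection of at most $n+2$ of the $U_i$'s. Lemma~\ref{nonempty intersection} applied to the $n$-dimensional space $\operatorname{int}(K)$ then yields $x_\varepsilon\in\bigcap_{i=1}^k U_i$ for every $\varepsilon>0$. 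As $\varepsilon\downarrow 0$, the nested family $\bigcap_i\bar{N}_{n\delta+\varepsilon}(B_i)\cap K$ of nonempty closed subsets of the compact set $K$ has nonempty intersection $\bigcap_i\bar{N}_{n\delta}(B_i)\cap K$, yielding the required point.

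The main obstacle is the combinatorial bookkeeping in the inductive step: one must track the ``missing index'' correspondence $q_l\leftrightarrow C_l$ carefully to verify that any three of the auxiliary points $q_l$ produce a triangle whose three edges and interior together cover all the $C_i$'s at distance $(j-2)\delta$. The Helly step and the final nested-intersection argument are then routine.
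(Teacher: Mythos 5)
Your proof is correct, and it shares the paper's overall skeleton (an induction establishing the bound $(j-2)\delta$ for families of at most $n+2$ sets, followed by an application of the topological Helly theorem, Lemma \ref{nonempty intersection}, to dispose of larger families), but the inductive step itself is genuinely different. The paper passes from $k$ sets to $k-1$ sets by slicing: it replaces $B_2,\dots,B_k$ by the convex sets $C_i=\bar N_\delta(B_i)\cap B_1$, verifies that these still pairwise intersect using one thin triangle per pair, and applies the induction hypothesis to $C_2,\dots,C_k$; the $+\delta$ in the estimate comes from the thickening $\bar N_\delta(B_i)$. You instead keep the original sets, extract approximate centers $q_1,q_2,q_3$ for three of the size-$j$ subfamilies, and run a single thin-triangle argument on $[q_1q_2q_3]$, using convexity of $d(\cdot,C_i)$ to control all $j+1$ sets along the edges; the $+\delta$ comes from the insize of that one triangle. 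Both yield the same constant. Two remarks: first, both you and the paper use ``$\delta$-hyperbolic'' to mean that every geodesic triangle admits a point within $\delta$ of all three sides (the insize formulation), which is how the paper itself phrases the $k=3$ base case, so your usage is consistent with the source even though it differs from the Rips thin-triangle condition by a bounded factor. Second, your final step is actually more careful than the paper's: Lemma \ref{nonempty intersection} is stated for open sets, and the paper applies it directly to the closed neighborhoods $\bar N_{n\delta}(B_i)$, whereas your $\varepsilon$-fattening inside a large compact convex ball followed by a nested-intersection limit closes that small gap cleanly (modulo the trivial adjustment of enlarging $K$ so that the auxiliary points land in its interior rather than merely in $K$).
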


\begin{proof}
For $k=1, 2$, the lemma is clearly true. 

We first claim that for each $3 \leq k\leq n+2$, there exists a point $x\in X$ such that $d(x, B_{i})\leq (k-2)\delta$. We prove the claim by induction on $k$. When $k=3$, pick points $x_{ij}\in B_i\cap B_j$, $i\ne j$. Then $x_{ij} x_{il}\subset B_i$ for all $i, j, l$. Since $X$ is $\delta$-hyperbolic, there exists a point $x\in X$ within distance $\le \delta$ from all three sides of the geodesic triangle $\left[ x_{12} x_{23} x_{31} \right] $. Hence,
 $$
d(x, B_{i})\leq \delta, i=1,2,3$$
as well. 

Assume that the claim holds for $k-1$.  Set $B'_{i}= \bar{N}_{\delta}(B_{i})$ and $C_{i}=B'_{i}\cap B_{1}$ where $i\in \lbrace 2, 3, \cdots, k \rbrace$. By the convexity of the 
distance function on $X$, each $B'_{i}$ is still convex in $X$ and, hence, is a Hadamard space. Furthermore, each $B_i'$ is again $\delta$-hyperbolic. 

We claim that $C_{i}\cap C_{j}\neq \emptyset$ for all $i, j \in \lbrace 2, 3, \cdots, k \rbrace $.  
By the nonemptyness assumption, there exist points $x_{1i}\in B_{1}\cap B_{i}\neq \emptyset, x_{1j}\in B_{1}\cap B_{j}\neq \emptyset $ and $x_{ij}\in B_{i}\cap B_{j}\neq \emptyset$. 
By $\delta$-hyperbolicity of $X$, there exists a point $y\in x_{1i}x_{1j}$ such that $d(y, x_{1i}x_{ij})\leq \delta$, 
$d(y, x_{2j} x_{ij})\leq \delta $. 

Therefore, $y\in B_1\cap \bar{N}_\delta(B_i) \cap \bar{N}_\delta(B_j)= C_i\cap C_j$. By the  induction hypothesis, 
there exists a point $x'\in X$ such that $d(x', C_{i})\leq (k-3)\delta$ for each $i\in \lbrace 2, 3, \cdots, k \rbrace$. 
Thus,
$$
d(x', B_{i})\leq (k-2)\delta, i\in \lbrace 1, 2, \cdots, k \rbrace$$
as required.

 For $k>n+2$, set $U_{i}=\bar{N}_{n \delta}(B_{i})$. Then by the claim, we know that for any subfamily of $\lbrace U_{i} \rbrace$ of size $j$ where $1 \leq j \leq n+2$, its intersection is nonempty and the intersection is contractible since it is convex. By Lemma \ref{nonempty intersection}, the intersection of the family $\lbrace U_{i} \rbrace$ is also nonempty. Let $x$ be a point in this intersection. Then $d(x, B_{i})\leq n \delta$ for all $i \in \lbrace 1, 2, \cdots, k \rbrace$. 

\end{proof}

\begin{proposition}
\label{dis}
There exists a function ${\mathfrak k}: \R_+\times \R_+\to \N$ with the following property. 
Let $g_{1}, g_{2}, \cdots, g_{k}$ be parabolic elements in a discrete subgroup $\Gamma<\Isom(X)$. For each $g_i$ let  
 $G_{i}< \Gamma$ be the unique maximal parabolic subgroup containing $g_{i}$, i.e. $G_i=\stab_\Ga(p_i)$, where $p_i\in \geo X$ is the fixed point of $g_i$. Suppose that 
$$
T_{\varepsilon}(G_{i})\cap T_{\varepsilon}(G_{j})=\emptyset
$$
for all $i\ne j$. Then, whenever $k\ge {\mathfrak k}(D,\varepsilon)$, there exists a pair of indices ${i}, {j}$ with 
$$d(T_{\varepsilon}(G_{i}), T_{\varepsilon}(G_{j})) > D.$$ 
\end{proposition}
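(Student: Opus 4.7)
The plan is to argue by contradiction: assume $d(T_{\varepsilon}(G_i), T_{\varepsilon}(G_j)) \leq D$ for every pair $i\neq j$ and bound $k$ in terms of $D$ and $\varepsilon$ via a Helly + Margulis + packing scheme.

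First, I would concentrate all the cusp regions near a single point. By Corollary \ref{quasiconvex} and Remark \ref{constant for quasiconvex}, each $C_i := \Hull(T_{\varepsilon}(G_i))$ is a closed convex subset of $X$ contained in the $r$-neighborhood of $T_{\varepsilon}(G_i)$, where $r = \mathfrak{r}_{\kappa}(\delta)$. Since $d(C_i, C_j) \leq D$, the convex sets $\bar{N}_{(D+1)/2}(C_i)$ pairwise intersect (take midpoints of almost-minimizing segments), so Proposition \ref{Helly} produces a point $x \in X$ with $d(x, T_{\varepsilon}(G_i)) \leq R_0 := (D+1)/2 + r + n\delta$ for every $i$.

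Second, I would push each $z_i \in T_{\varepsilon}(G_i)$ (chosen closest to $x$) deep into the cusp, using exponential contraction of parabolic displacement along the ray to $p_i$. Set $\varepsilon' := \min\{\varepsilon,\, \varepsilon(n,\kappa)/3\}$ and pick a nontrivial parabolic $\gamma_i \in G_i$ with $d(z_i, \gamma_i z_i) \leq \varepsilon$. Since $\gamma_i$ preserves every horosphere about $p_i$, the points $z_i$ and $\gamma_i z_i$ lie on a common horosphere, so Lemma \ref{prop 1.1}(2) yields $d(\rho_{z_i}(t),\, \gamma_i \rho_{z_i}(t)) \leq R(\varepsilon)\,e^{-t}$ along the ray $z_i p_i$. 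Taking $t_0 := \log(R(\varepsilon)/\varepsilon')$, the point $z_i^* := \rho_{z_i}(t_0)$ satisfies $d(z_i^*, \gamma_i z_i^*) \leq \varepsilon'$. Since $\gamma_i$ has infinite order, $\Gamma_{\varepsilon'}(z_i^*)$ is infinite; Proposition \ref{nilpotent subgroup with finite index} then makes it virtually nilpotent, hence elementary, and containing the parabolic $\gamma_i$ pins it inside the unique maximal parabolic $G_i$. Thus $z_i^* \in T_{\varepsilon'}(G_i)$ and $d(x, z_i^*) \leq R := R_0 + t_0$.

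Finally, the Margulis lemma forces the $z_i^*$ to be $\varepsilon'$-separated: if some $y \in T_{\varepsilon'}(G_i)$ and $y' \in T_{\varepsilon'}(G_j)$ with $i \neq j$ satisfied $d(y, y') < \varepsilon'$, picking parabolics $\tilde\gamma_i \in G_i,\, \tilde\gamma_j \in G_j$ of displacement at most $\varepsilon'$ at $y$ and $y'$ respectively would give $d(y, \tilde\gamma_j y) \leq 3\varepsilon' \leq \varepsilon(n,\kappa)$, so $\Gamma_{\varepsilon(n,\kappa)}(y)$ would be elementary parabolic and contain two parabolics with distinct fixed points $p_i \neq p_j$ — a contradiction. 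Hence $d(z_i^*, z_j^*) \geq \varepsilon'$ whenever $i \neq j$, and Lemma \ref{lem:packing} bounds $k$ by an explicit function $\mathfrak{k}(D, \varepsilon)$. The main obstacle is step two: confirming that the pushed-in point $z_i^*$ really lies in $T_{\varepsilon'}(G_i)$ for the correct maximal parabolic, which hinges on the uniqueness of the maximal parabolic through $\gamma_i$ combined with the Margulis lemma applied at $z_i^*$.
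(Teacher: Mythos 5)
Your overall architecture (quasiconvexity of the cusps plus Helly to locate a point uniformly close to all the $T_{\varepsilon}(G_i)$, then pushing into a deeper thin part along the rays to $p_i$, then a packing bound) is exactly the paper's, but there is a genuine gap in your second step. You ``pick a nontrivial parabolic $\gamma_i\in G_i$ with $d(z_i,\gamma_i z_i)\le\varepsilon$.'' Such an element need not exist: the statement $z_i\in T_{\varepsilon}(G_i)$ only says that the subgroup of $G_i$ generated by $\lbrace \gamma\in G_i \mid d(z_i,\gamma z_i)\le \varepsilon\rbrace$ is infinite, and every element of that generating set may be elliptic --- an infinite parabolic group can be generated by finite-order elements. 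This is precisely the situation Proposition \ref{produce parabolic element} is designed to handle, and even that only produces a parabolic of displacement $\le C(n,\kappa)\varepsilon$ at $z_i$, not $\le\varepsilon$. The same unjustified existence of small-displacement parabolics reappears in your separation step, where you pick $\tilde\gamma_i,\tilde\gamma_j$. The repair is what the paper does: since every elliptic or parabolic element of $G_i$ preserves horospheres about $p_i$ (and $G_i$ contains no loxodromics), Lemma \ref{prop 1.1}(2) applies simultaneously to \emph{all} elements of $G_i$ moving $z_i$ a distance at most $\varepsilon$; after travelling $t_0=\max\lbrace\ln(3R(\varepsilon)/\varepsilon),0\rbrace$ along $z_ip_i$ the entire generating set has displacement $\le\varepsilon/3$, so the pushed-in point lies in $T_{\varepsilon/3}(G_i)$ with no need to isolate a single parabolic. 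For the separation you also do not need the Margulis lemma at all: by Lemma \ref{smaller cusp}, $B(z_i^*,\varepsilon/3)\subseteq T_{\varepsilon}(G_i)$, and the hypothesis that the sets $T_{\varepsilon}(G_i)$ are pairwise disjoint makes these balls pairwise disjoint, which is all that Lemma \ref{lem:packing} requires. With these two local corrections your proof coincides with the paper's.
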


\begin{proof} For each $i$, 
 $\Hull(T_{\varepsilon}(G_{i}))$ is convex and by Remark \ref{constant for quasiconvex},  $\Hull(T_{\varepsilon}(G_{i}))\subseteq \bar{N}_{r}(T_{\varepsilon}(G_{i}))$, for some uniform constant $r=r_{\kappa}(\delta)$.  Suppose that $g_{1}, g_{2}, \cdots, g_{k}$ and $D$ are such that for all 
$i$ and $j$, 
$$
d(T_{\varepsilon} (G_{i}), T_{\varepsilon}(G_{j})) \leq D.$$
Then $d(\Hull(T_{\varepsilon}(G_{i})), \Hull(T_{\varepsilon}(G_{j}))) \leq D$.

Our goal is to get a uniform upper bound on $k$. Consider the $D/2$-neighborhoods 
$\bar{N}_{D/2}(\Hull(T_{\varepsilon}(G_{i})))$. They are convex in $X$ and have nonempty pairwise intersections. 
Thus, by Proposition \ref{Helly}, there is a point $x\in X$ such that 
$$
d(x, T_{\varepsilon}(G_{i}))\leq R_1:=n \delta + \frac{D}{2} + r, i=1,...,k. 
$$ 
Then 
$$
T_{\varepsilon}(G_{i}) \cap B(x, R_1)\neq \emptyset, i=1,...,k. $$

Next, we claim that there exists  $R_{2}\geq 0$, depending only on $\varepsilon$,  
such that 
$$
T_{\varepsilon}(G_{i}) \subseteq \bar{N}_{R_{2}}(T_{\varepsilon /3}(G_{i})).$$

Choose any point $y \in T_{\varepsilon}(G_{i})$ and let $\rho_i: [0,\infty)\to X$ be the ray $yp_i$. 
By Lemma \ref{prop 1.1},  there exists  $R=R(\varepsilon)$ such that 
$$
d(\rho_i(t), g( \rho_i(t)))\le R e^{-t}
$$
whenever $g\in G_{i}$ is a parabolic (or elliptic) isometry such that 
$$
d(y, g(y))\leq \varepsilon.$$

 Let $t=\textup{max} \lbrace \ln (3R/\varepsilon), 0 \rbrace$. Then $d(\rho_{i}(t), g(\rho_{i}(t)))\leq \varepsilon /3$ and, therefore, 
 $$
 T_{\varepsilon}(G_{i})\subseteq \bar{N}_{t}(T_{\varepsilon /3}(G_{i}))$$ 
 for all $i$. Let  $R_{2}=t$. By the argument above, $B(x, R_{1}+R_{2})\cap T_{\varepsilon /3}(G_{i}) \neq \emptyset$ for all $i$. Assume that $z_{i}\in B(x, R_{1}+R_{2})\cap T_{\varepsilon /3}(G_{i})$. Then $B(z_{i}, \varepsilon /3)\subseteq B(x, R_{3})$ where $R_{3}=R_{1}+R_{2}+\varepsilon /3$. By Lemma \ref{smaller cusp}, 
 $B(z_{i}, \varepsilon /3)\subseteq B(x, R_{3})\cap T_{\varepsilon}(G_{i})$. Since $T_{\varepsilon}(G_{i})$ and $T_{\varepsilon}(G_{j})$ are disjoint for all $i\ne j$, the metric balls 
 $B(z_{i}, \varepsilon /3)$ and $B(z_{j}, \varepsilon /3)$ are also disjoint. Recall that  $V(r, n)$ denotes the volume of the  $r$-ball in $\H^{n}$.  Then Lemma \ref{lem:packing} implies that 
  for every  
 $$
 k\geq {\mathfrak k}(D, \varepsilon):= \frac{C_{n}e^{\kappa(n-1)R_{3}}}{V(\varepsilon/3, n)}  +1, 
 $$
 there exist $i$, $j$, $1\le i, j\le k$, such that $d(T_{\varepsilon}(G_{i}), T_{\varepsilon}(G_{j}))> D$. 
\end{proof}

\begin{figure}[H]
\centering
\includegraphics[width=1.5in]{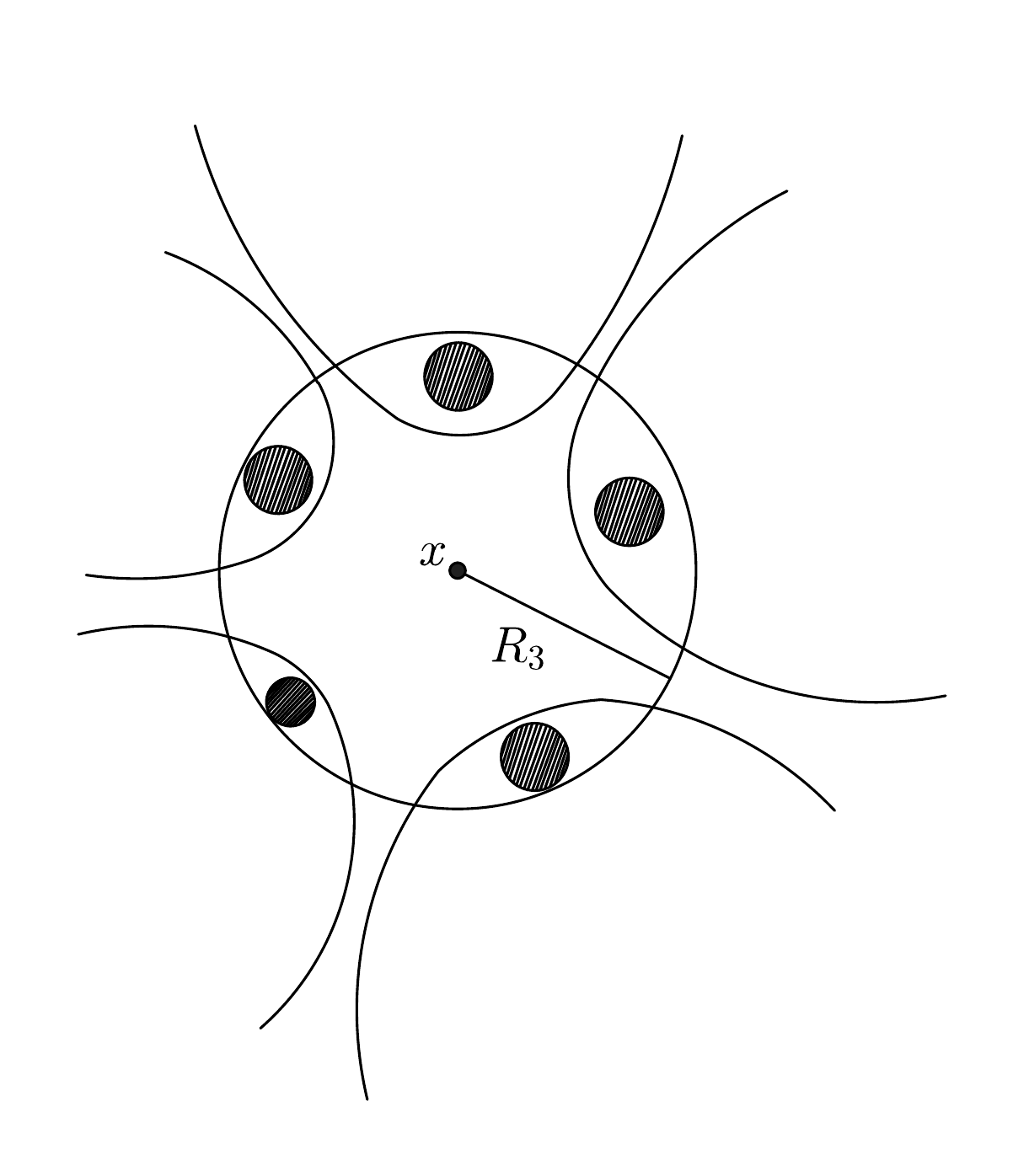} 
\caption{}
\end{figure}

\begin{proposition}
\label{lox}
Suppose that  $g_{1}, g_{2}$ are  parabolic isometries of $X$. There exists a constant $L$ which only depends on $\varepsilon$ such that if $d(Mar(g_1, \varepsilon), Mar(g_2, \varepsilon))> L$, then $h=g_{2}g_{1}$ is loxodromic. 
\end{proposition}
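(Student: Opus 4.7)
Set $A_i := \mathrm{Mar}(g_i, \varepsilon)$, which are closed convex subsets of $X$, and choose $q_i \in \partial A_i$ realizing $d := d(q_1, q_2) = d(A_1, A_2) > L$. Since $q_i \in \partial A_i$, the displacements satisfy $d(q_i, g_i q_i) = \varepsilon$ exactly. Form a piecewise geodesic path $\tilde\gamma \subset X$ whose ordered vertex sequence is the $h$-orbit of the tuple $(q_1, q_2, g_2 q_2, g_2 q_1)$. Each $h$-period of $\tilde\gamma$ consists of four consecutive edges: (i) the long bridge $q_1 q_2$, of length $d$; (ii) the short chord $q_2 (g_2 q_2)$ inside $A_2$, of length $\varepsilon$; (iii) the long bridge $(g_2 q_2)(g_2 q_1)$, which is the $g_2$-image of $q_2 q_1$, of length $d$; and (iv) the short chord $(g_2 q_1)(h q_1)$ inside $g_2(A_1)$, which is the $g_2$-image of $q_1 (g_1 q_1)$, of length $\varepsilon$. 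By construction, $\tilde\gamma$ is $h$-invariant.

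I next verify the hypotheses of Proposition~\ref{qua}. The bridge $q_1 q_2$ realizes the minimum distance between the convex sets $A_1$ and $A_2$, hence meets $\partial A_i$ orthogonally at $q_i$; analogously, $(g_2 q_2)(g_2 q_1) = g_2(q_2 q_1)$ realizes the minimum distance between $g_2 A_2 = A_2$ and $g_2 A_1$, so meets $\partial A_2$ at $g_2 q_2$ and $\partial(g_2 A_1)$ at $g_2 q_1$ orthogonally. Each short chord lies inside a convex Margulis region with both endpoints on the boundary, so by convexity its tangent direction at either endpoint lies in the closed inward half-space defined by the outward normal, and in particular makes angle $\ge \pi/2$ with that normal. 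Combining these observations at each of the four vertex types gives angle $\ge \pi/2$ at every short--long junction in $\tilde\gamma$. Since $\tilde\gamma$ has no long--long junctions, condition~(3) of Proposition~\ref{qua} is vacuous: take $\theta = \pi$.

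Set $L := L(\pi, \varepsilon)$ as defined in Proposition~\ref{qua}; the hypothesis $d > L$ then yields that $\tilde\gamma$ is a uniform $(2L, 4L+3)$-quasi-geodesic. The Morse Lemma produces a bi-infinite geodesic $\gamma^\ast \subset X$ at bounded Hausdorff distance from $\tilde\gamma$, necessarily unique (two bi-infinite geodesics in a $\delta$-hyperbolic Hadamard space at finite Hausdorff distance coincide). Since $\tilde\gamma$ is $h$-invariant, uniqueness forces $h(\gamma^\ast) = \gamma^\ast$; since $\tilde\gamma$ has infinite length in both directions, $h$ must act on $\gamma^\ast$ as a nontrivial translation. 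Hence $h$ is loxodromic with axis $\gamma^\ast$. The main technical step is the angle verification in the second paragraph, which uses the convexity of the Margulis regions together with the $g_2$-equivariance relation $g_2 A_2 = A_2$ to identify $g_2(q_2 q_1)$ as an orthogonal bridge between $A_2$ and $g_2 A_1$.
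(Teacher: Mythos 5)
Your proof is correct and is essentially the paper's own argument: the $h$-orbit of the tuple $(q_1,q_2,g_2q_2,g_2q_1)$ is exactly the vertex sequence $\ldots,x_0,y_0,y_1,x_1,x_2,\ldots$ used in the paper, the orthogonality at the short--long junctions is obtained from convexity of the Margulis regions in the same way, and the conclusion via Proposition~\ref{qua} and the Morse Lemma is identical. The only difference is presentational (orbit of a fundamental segment versus explicit indexed vertices), so there is nothing further to add.
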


 \begin{figure}[H]
\centering
\includegraphics[width=5.5in]{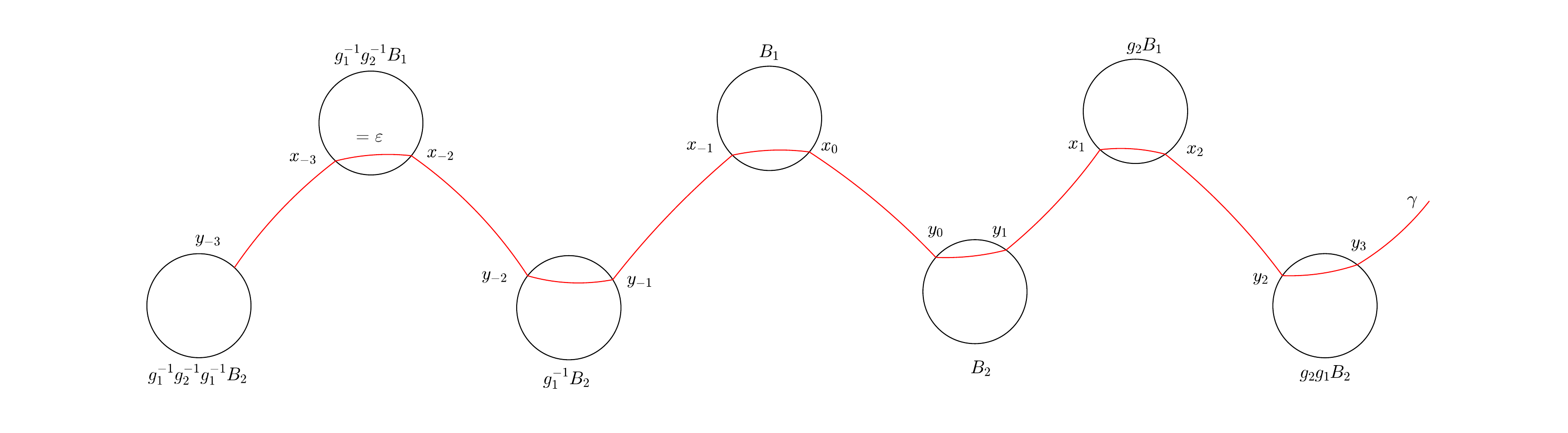} 
\caption{   \label{parabolicpic}}
\end{figure}

\begin{proof}
Let  $B_{i}=Mar(g_i, \varepsilon)$, so $d(B_{1}, B_{2})> L$. Consider the orbits of $B_{1}$ and $B_{2}$ under the action of the cyclic group generated by $g_{2}g_{1}$ as in Figure  \ref{parabolicpic}. Let $x_{0}\in B_{1}, y_{0}\in B_{2}$ denote points such that $d(x_{0}, y_{0})$ minimizes the distance function between points of $B_{1}$ and $B_{2}$. For positive integers $m> 0$, we let $$x_{2m-1}=(g_{2}g_{1})^{m-1}g_{2}(x_{0}), \quad x_{2m}=(g_{2}g_{1})^{m}(x_{0})$$ and $$y_{2m-1}=(g_{2}g_{1})^{m-1}g_{2}(y_{0}), \quad y_{2m}=(g_{2}g_{1})^{m}(y_{0}).$$ Similarly, for negative integers $m<0$, we let $$x_{2m+1}=(g_{2}g_{1})^{m+1}g_{1}^{-1}(x_{0}), \quad x_{2m}=(g_{2}g_{1})^{m}(x_{0})$$ and $$y_{2m+1}=(g_{2}g_{1})^{m+1}g_{1}^{-1}(y_{0}), \quad y_{2m}=(g_{2}g_{1})^{m}(y_{0}).$$ 

We construct a sequence of piecewise geodesic paths $\lbrace \gamma_{m} \rbrace$ where 
$$
\gamma_{m}=x_{-2m}y_{-2m}\ast y_{-2m}y_{-2m+1}\cdots \ast x_{0}y_{0} \ast y_{0}y_{1}\ast y_{1}x_{1} \cdots \ast x_{2m}y_{2m}
$$
 for positive integers $m$. Observe that $d(x_{i}, y_{i})=d(B_{1}, B_{2})>L$  and $d(x_{2i-1}, x_{2i})= \varepsilon$,  $d(y_{2i}, y_{2i+1})= \varepsilon $    for any integer $i$. By convexity of $B_1, B_2$, the angle between any adjacent geodesic arcs in $\gamma_{m}$ is at least $\pi /2$. Let $\gamma$ denote the limit of the sequence $( \gamma_{m} )$.  By Proposition \ref{qua}, there exists a constant  $L>0$ such that the piecewise geodesic path $\gamma : \mathbb{R} \rightarrow X$ is unbounded and is a uniform quasigeodesic invariant under the action of $h$. By the Morse Lemma \cite[Lemma 9.38, Lemma 9.80]{KD}, the Hausdorff distance between $\gamma$ and the complete geodesic which connects the endpoints of $\gamma$ is bounded by a uniformly constant $C$. Thus, $g_{2}g_{1}$ fixes the endpoints of $\gamma$ and acts on the complete geodesic as a translation. We conclude that $g_{2}g_{1}$ is loxodromic. \end{proof}

\begin{theorem}
\label{proposition 3.16}
Suppose that $g_{1}, g_{2}$ are two parabolic elements  with different fixed points. Then there exists a word $w\in \langle g_{1}, g_{2} \rangle$ such that $|w|\leq 4{\mathfrak k}(L, \varepsilon)+2$ and $w$ is loxodromic where $|w|$ denotes the length of the word and ${\mathfrak k}(L, \varepsilon)$ is the function in Proposition \ref{dis}, $0<\varepsilon\leq \varepsilon(n, \kappa)$ and $L$ is the constant in Proposition \ref{lox}. 
\end{theorem}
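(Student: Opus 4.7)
The strategy is to produce many parabolic elements in $\Gamma:=\langle g_1,g_2\rangle$ of uniformly bounded word length, sitting in pairwise distinct maximal parabolic subgroups, and then apply Proposition \ref{dis} followed by Proposition \ref{lox}. Specifically, I will work with the conjugates
$$
h_i := g_1^i \, g_2 \, g_1^{-i}, \qquad 0 \le i \le k-1,
$$
where $k := {\mathfrak k}(L,\varepsilon)$ is the constant of Proposition \ref{dis} applied with $D=L$, and $L=L(\varepsilon)$ is the constant of Proposition \ref{lox}. Each $h_i$ is a parabolic isometry fixing the ideal point $\xi_i := g_1^i(p_2)$, where $p_2$ denotes the unique fixed point of $g_2$.

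The first step is to observe that the points $\xi_0,\xi_1,\ldots,\xi_{k-1}$ are pairwise distinct. Indeed, if $g_1^i(p_2)=p_2$ for some $i\ne 0$, then $g_1^i$ would be a parabolic isometry fixing both $p_1$ (the unique fixed point of $g_1$) and $p_2$, contradicting the assumption that $p_1 \ne p_2$. Consequently, if $\hat G_i := \stab_\Gamma(\xi_i)$ denotes the maximal parabolic subgroup of $\Gamma$ containing $h_i$, then the subgroups $\hat G_0,\ldots,\hat G_{k-1}$ are pairwise distinct maximal parabolic subgroups. By the thick-thin decomposition recalled in Section \ref{sec:thick-thin} (the sets $T_\varepsilon(G)$ for distinct maximal infinite elementary subgroups $G<\Gamma$ are disjoint), the sets $T_\varepsilon(\hat G_i)$ are pairwise disjoint.

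Now I apply Proposition \ref{dis} to the $k={\mathfrak k}(L,\varepsilon)$ parabolic elements $h_0,\ldots,h_{k-1}$: there exist indices $i\ne j$ with
$$
d\bigl(T_\varepsilon(\hat G_i),\,T_\varepsilon(\hat G_j)\bigr) > L.
$$
Since $\mathrm{Mar}(h_i,\varepsilon)\subseteq T_\varepsilon(\hat G_i)$ and $\mathrm{Mar}(h_j,\varepsilon)\subseteq T_\varepsilon(\hat G_j)$, the same lower bound $L$ holds for $d(\mathrm{Mar}(h_i,\varepsilon),\mathrm{Mar}(h_j,\varepsilon))$. Proposition \ref{lox} then produces the loxodromic element
$$
w := h_j\, h_i \;=\; g_1^{j}\, g_2\, g_1^{-j}\, g_1^{i}\, g_2\, g_1^{-i} \in \Gamma.
$$
A direct count gives $|w|\le |h_j|+|h_i| \le (2j+1)+(2i+1) \le 4(k-1)+2 = 4{\mathfrak k}(L,\varepsilon)-2 \le 4{\mathfrak k}(L,\varepsilon)+2$, which is the advertised bound.

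\textbf{Main obstacle.} The substantive content is bundled into Propositions \ref{dis} and \ref{lox}, which are already available. The only genuinely new input needed for this theorem is the verification that the conjugates $h_i$ supply the required ``many pairwise far-apart parabolic fixed points.'' The key (and only delicate) ingredient is that the orbit $\{g_1^i(p_2)\}_{i\in \mathbb Z}$ consists of distinct points, for which the hypothesis $p_1\ne p_2$ is crucial; once this is in hand, the disjointness of the $T_\varepsilon(\hat G_i)$ is automatic from the thick-thin decomposition, and the rest of the argument is a matter of bookkeeping word length.
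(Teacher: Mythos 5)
Your proof is correct and follows essentially the same route as the paper's: conjugate one parabolic by powers of the other to produce ${\mathfrak k}(L,\varepsilon)$ parabolic elements with pairwise distinct fixed points, note that the corresponding sets $T_\varepsilon$ are pairwise disjoint, invoke Proposition \ref{dis} to find two whose Margulis regions are more than $L$ apart, and conclude with Proposition \ref{lox}. The only (immaterial) differences are that the roles of $g_1$ and $g_2$ are swapped and that you omit the paper's preliminary assumption that all words of length at most $2{\mathfrak k}(L,\varepsilon)+1$ are parabolic, which is in any case not needed since conjugates of parabolics are automatically parabolic.
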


\begin{proof}
Let $p_{i}\in \geo X$ denote the fixed point of the parabolic isometry $g_{i}$,  $i=1, 2$.

Assume that every element in $\langle g_{1}, g_{2} \rangle$ of word length at most $2k(L, \varepsilon)+1$ is parabolic (otherwise, there exists a loxodromic element $w\in \langle g_{1}, g_{2} \rangle $ of word-length $\leq 4\mathfrak{k}(L, \varepsilon)+2$).

Consider the parabolic elements 
$g_{2}^{i}g_{1}g_{2}^{-i}\in \langle g_{1}, g_{2} \rangle$, $0 \leq i \leq {\mathfrak k}(L, \varepsilon)$. The fixed point (in $\geo X$) of each $g_{2}^{i}g_{1}g_{2}^{-i}$ is $g_{2}^{i}(p_{1})$. We claim that the points $g_{2}^{i}(p_{1})$ and 
$g_{2}^{j}(p_{1})$ are distinct for $i\ne j$. If not, $g_{2}^{i}(p_{1})=g_{2}^{j}(p_{1})$ for some $i>j$. Then $g_{2}^{i-j}(p_{1})=p_{1}$, and, thus,  $g_{2}^{i-j}$ has two distinct fixed points $p_{1}$ and $p_{2}$.  
This is a contradiction since any parabolic element has only one fixed point. Thus, $g_{2}^{i}g_{1}g_{2}^{-i}$ are parabolic elements with distinct fixed points for all $0 \leq i \leq k(L, \varepsilon)$. 
Since $0<\varepsilon\leq \varepsilon(n, \kappa)$, $T_{\varepsilon}(\langle g_{2}^{i}g_{1}g_{2}^{-i}\rangle), T_{\varepsilon}(\langle g_{2}^{j}g_{1}g_{2}^{-j}\rangle)$ are disjoint for any pair of indices $i, j$ \cite{Bo2}. 
By Proposition \ref{dis}, there exist  $0 \leq i , j \leq {\mathfrak k}(L ,\varepsilon)$ such that 
$$d(Mar(g_{2}^{i}g_{1}g_{2}^{-i}, \varepsilon), Mar(g_{2}^{j}g_{1}g_{2}^{-j}, \varepsilon))> L.$$
By Proposition \ref{lox}, the element $g_{2}^{j}g_{1}g_{2}^{i-j}g_{1}g_{2}^{-i}\in \langle g_{1}, g_{2} \rangle$ is loxodromic, and its word length is  $\le 4{\mathfrak k}(L, \varepsilon)+2$. Thus we can find a word $w\in \langle g_{1}, g_{2} \rangle$ such that $|w|\leq 4k(L, \varepsilon)+2$ and $w$ is loxodromic. 
\end{proof}

\begin{remark}
According to Lemma \ref{lem:large displacement}, for every parabolic isometry $g\in \Isom(X)$ and $x\notin T_{\varepsilon}(\langle g\rangle)$, there exists  
$i\in (0, N(\varepsilon, n, \kappa, L)]$ such that $d(x, g^{i}(x))>L$. Therefore, using an  argument similar to the one in the proof of Proposition \ref{lox}, we conclude that one of the products 
$g_1^{k_1}g_{2}^{k_2}$ 
is loxodromic, where $k_1, k_2 > 0$ are uniformly bounded from above.  This provides an alternative proof of the existence of 
loxodromic elements of uniformly bounded word length. We are grateful to the referee for suggesting this alternative argument. 
\end{remark}

\medskip
We now consider discrete subgroups generated by elliptic elements. In this setting, we will prove that every infinite discrete elementary subgroup $\Ga< \Isom(X)$ 
contains an infinite order element of uniformly bounded word-length (Lemma \ref{preserve geodesic} and Proposition \ref{produce parabolic element}).

\begin{lemma}
\label{preserve geodesic}
Suppose that the set $T=\lbrace g_{1}, g_{2}, \cdots, g_{m} \rbrace \subset \Isom(X) $ consists of elliptic elements, and the group $\langle T \rangle$ is an elementary loxodromic group.  Then there is a pair of indices $1\leq i, j \leq m$ such that $g_{i}g_{j}$ is loxodromic. 
\end{lemma}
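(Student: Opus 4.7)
The plan is to exploit the fact that an elementary loxodromic group preserves a unique bi-infinite geodesic $\ell \subset X$ setwise, and then to analyze how elliptic elements must act on this geodesic.

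First, I would show that each generator $g_i$ acts on $\ell$ either as the identity or as a reflection. Since $g_i$ is elliptic, it fixes some point $p_i \in X$. Because $g_i$ also preserves $\ell$ setwise, it preserves the distance function from $p_i$ to points of $\ell$; since $X$ is uniquely geodesic (and CAT($-1$)), the nearest-point projection $q_i = \textup{Proj}_\ell(p_i) \in \ell$ is unique and hence fixed by $g_i$. The restriction $g_i|_\ell \in \Isom(\ell) \cong \Isom(\mathbb{R})$ therefore fixes a point on $\ell$, which forces $g_i|_\ell$ to be either the identity or a reflection at $q_i$.

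Next, I would consider the restriction homomorphism $\rho: \langle T \rangle \to \Isom(\ell)$. By the corollary following Lemma \ref{finite elementary group}, since $\langle T \rangle$ is elementary loxodromic, it contains a loxodromic element $h$, and $\rho(h)$ must be a nontrivial translation of $\ell$ (as $h$ acts on its axis $\ell$ as a nontrivial translation). Therefore the image $\rho(\langle T\rangle)$, which is generated by $\rho(g_1),\ldots,\rho(g_m)$, contains a nontrivial translation. A subgroup of $\Isom(\mathbb{R})$ generated by the identity together with reflections all sharing a common fixed point contains no nontrivial translations, so at least two of the $\rho(g_i)$ must be reflections at \emph{distinct} points of $\ell$.

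Finally, for such a pair of indices $i \ne j$, the composition $\rho(g_i)\rho(g_j) = \rho(g_i g_j)$ is the product of two reflections of $\ell$ at distinct points, which is a nontrivial translation of $\ell$. Hence $g_i g_j$ preserves $\ell$ and translates it nontrivially; in particular it fixes the two endpoints $\xi, \eta \in \geo X$ of $\ell$ but has no fixed point on $\ell$. It cannot be parabolic (a parabolic has exactly one ideal fixed point) and it cannot be elliptic: any fixed point $p \in X$ would, by the projection argument above, force a fixed point on $\ell$, contradicting nontriviality of the translation. Thus $g_i g_j$ is loxodromic. The argument is mostly bookkeeping; the only conceptual point requiring care is the projection argument showing that an elliptic isometry preserving $\ell$ must fix a point of $\ell$, which rules out the possibility of a single $g_i$ already acting as a translation.
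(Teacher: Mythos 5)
Your proof is correct and follows essentially the same route as the paper's: show each $g_i$ restricts to $\ell$ as the identity or a reflection (via projecting a fixed point of $g_i$ onto $\ell$), deduce that two of them must be reflections at distinct points, and observe that their product is then a nontrivial translation of $\ell$, hence loxodromic. The paper phrases the middle step slightly differently (if no generator swapped the endpoints of $\ell$, the group would fix $\ell$ pointwise and be finite), but the substance is identical.
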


\begin{proof}
Let $l$ denote the geodesic preserved setwise by $\langle T \rangle$. We claim that there exists $g_{i}$ which 
swaps the endpoints of $l$. Otherwise, $l$ is fixed pointwise by $\langle T \rangle$,  and $\langle T\rangle $ is a finite elementary  subgroup of $\Isom(X)$ which is a contradiction. Since $g_{i}(l)=l$, there exists $x\in l$ such that $g_{i}(x)=x$. By the same  argument as in Lemma \ref{finite elementary group}, there exists $g_{j}$ such that $g_{j}(x)\neq x$, and $g_{i}g_{j}$ is loxodromic. 
\end{proof}

For discrete parabolic elementary subgroups generated by elliptic isometries, we have the following result.

\begin{proposition}
\label{produce parabolic element}
Given $x\in X, 0<\varepsilon\leq \varepsilon(n, \kappa)$ and a discrete subgroup $\Gamma<\Isom(X)$, suppose that the set $\calF_{\varepsilon}(x) 
\subset \Gamma$ consists of elliptic elements and the group $\Gamma_{\varepsilon}(x)<\Gamma$ generated by this set is a parabolic elementary subgroup. Then there is a parabolic element $g\in \Gamma_{\varepsilon}(x)$ of  word length in $\calF_{\varepsilon}(x)$ uniformly bounded by a constant $C(n, \kappa)$.

\end{proposition}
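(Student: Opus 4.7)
The plan is to exploit the nilpotent subgroup of bounded index furnished by Proposition \ref{nilpotent subgroup with finite index}, together with the fact that a finitely generated torsion nilpotent group is finite. The key observation is that $\Gamma_{\varepsilon}(x)$ is infinite (because it is parabolic), so if no short word in $\calF_{\varepsilon}(x)$ were parabolic, a naturally occurring nilpotent subgroup would be forced to consist entirely of torsion elements and hence be finite, yielding a contradiction.

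I would first invoke Proposition \ref{nilpotent subgroup with finite index} to obtain a nilpotent subgroup $N<\Gamma_{\varepsilon}(x)$ whose index is bounded by a constant depending only on $n$ and $\kappa$, with each coset represented by an element of word length at most $m(n,\kappa)$ in the (finite, symmetric) generating set $\calF_{\varepsilon}(x)$ of $\Gamma_{\varepsilon}(x)$. Applying the Schreier rewriting process with this bounded transversal $T$ then yields a generating set $S'$ of $N$ whose members are words of length at most $2m(n,\kappa)+1$ in $\calF_{\varepsilon}(x)$, obtained in the standard form $ts\overline{ts}^{-1}$ with $t\in T$ and $s\in \calF_{\varepsilon}(x)$. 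Since $\Gamma_{\varepsilon}(x)$ is parabolic, it contains a parabolic isometry (Remark \ref{elementary groups}) and is therefore infinite, so $N$ is an infinite, finitely generated nilpotent group.

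Now suppose for contradiction that every element of $S'$ is elliptic. By discreteness of $\Gamma$, each elliptic isometry has finite order (its stabilizer of any fixed point in $X$ is a discrete subgroup of a compact orthogonal group), so every element of $S'$ is torsion; by Theorem \ref{torsion group}, the torsion elements of $N$ form a characteristic subgroup $\textup{Tor}(N)\trianglelefteq N$, forcing $N=\langle S'\rangle\subseteq \textup{Tor}(N)$. A finitely generated torsion nilpotent group is finite (each quotient in its upper central series is a finitely generated abelian torsion group, hence finite), which contradicts the infinitude of $N$. Therefore some $g\in S'$ has infinite order, and since $g$ fixes the unique parabolic fixed point of $\Gamma_{\varepsilon}(x)$ in $\geo X$ while having no fixed point in $X$ (being non-elliptic in a discrete group), $g$ must be parabolic, with word length at most $C(n,\kappa):=2m(n,\kappa)+1$ in $\calF_{\varepsilon}(x)$. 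The main point to watch is the Schreier bound itself: one needs to confirm that $\calF_{\varepsilon}(x)$ is symmetric (immediate, since $\gamma\in \calF_{\varepsilon}(x)$ implies $\gamma^{-1}\in \calF_{\varepsilon}(x)$) and that the uniformly short coset representatives supplied by Proposition \ref{nilpotent subgroup with finite index} assemble into the Schreier generators $ts\overline{ts}^{-1}$ with the advertised length control; the remainder of the argument then follows cleanly from the structure theory of finitely generated nilpotent groups.
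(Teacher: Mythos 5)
Your proposal is correct and follows essentially the same route as the paper: Proposition \ref{nilpotent subgroup with finite index} gives the bounded-index nilpotent subgroup $N$ with short coset representatives, the Reidemeister--Schreier rewriting produces generators of $N$ of word length at most $2m(n,\kappa)+1$, and Theorem \ref{torsion group} forces a non-elliptic (hence parabolic) element among them, since otherwise $N$ would be an infinite torsion nilpotent group. The only cosmetic difference is that you conclude finiteness of a finitely generated torsion nilpotent group directly from the structure theory, whereas the paper cites Lemma \ref{finite elementary group}; both are valid.
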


\begin{proof}
Let  $N$  be the subgroup of $\Gamma_{\varepsilon}(x)$ generated by the set $\lbrace \gamma \in \Gamma_{\varepsilon}(x) \mid n_{\gamma}(x)\leq 0.49 \rbrace$. By Proposition \ref{nilpotent subgroup with finite index}, $N$ is a nilpotent subgroup of  $\Gamma_{\varepsilon}(x)=s_{1}N\cup s_{2}N\cdots \cup s_{I}N$ where the index $I$ is uniformly bounded and each $s_{i}$ has uniformly bounded word length $\leq m(n, \kappa)$ with respect to the generating set $\calF_{\varepsilon}(x)$ of $\Gamma_{\varepsilon}(x)$.  

Let $F=F_{S}$ denote the free group on $S=\calF_{\varepsilon}(x)$. Consider the projection map $\pi: F\rightarrow \Gamma_{\varepsilon}(x)$, and the preimage $\pi^{-1}(N)< F$. Let $T$ denote a left Schreier transversal for $\pi^{-1}(N)$ in $F$ (i.e a transverse for $\pi^{-1}(N)$ in $F$ so that every initial segment of an element of $T$ itself belongs to $T$). By the construction, every element $t\in T$ in the Schreier transversal has the minimal word length among all the elements in $t\pi^{-1}(N)$. Then the word length of $t$ is also bounded by $m(n, \kappa)$ since $t\pi^{-1}(N)=s_{i}\pi^{-1}(N)$ for some $i$. By the Reidemeister-Schreier Theorem, $\pi^{-1}(N)$ is generated by the set 
$$\mathrm{Y}=\lbrace  t\gamma_{i}s \mid t, s\in T, \gamma_{i} \in \mathcal{F}_{\varepsilon}(x), \textup{ and }  s\pi^{-1}(N)=t\gamma_{i}\pi^{-1}(N) \rbrace . $$
Since the word length of elements in a Schreier transversal is not greater than $m(n, \kappa)$, then the word length of  elements in the generating set $\mathrm{Y}$ is not greater than $2m(n, \kappa)+1$. 

Next, we claim that there exists a parabolic element in  $\pi(\mathrm{Y})$. If not, then all the elements in $\pi(\mathrm{Y})$ are elliptic. By Theorem \ref{torsion group}, all the torsion elements in $N$ form a subgroup of $N$. Hence all elements in $N=\langle \pi(\mathrm{Y}) \rangle$ are elliptic. By Lemma \ref{finite elementary group}, $N$ is finite, which contradicts  our assumption that  $\Gamma_{\varepsilon}(x)$ is infinite.  Therefore,  there exists a parabolic element in $\pi(Y)$ whose word length is $\leq 2m(n, \kappa)+1$. 
We let $C(n, \kappa)=2m(n, \kappa)+1$. 

\end{proof}

\begin{remark}
The virtually nilpotent group $\Gamma_{\varepsilon}(x)$ is uniformly finitely generated by at most $S(n, \kappa)$ isometries $\alpha$ satisfying $d(x, \alpha(x))\leq \varepsilon$, \cite[Lemma 9.4]{BaGS}. Let $F$ be the free group on the set $A$ consisting of such elements $\alpha$.  Since the number of subgroups of $F$ with a given finite index is uniformly bounded, and each subgroup has a finite free generating set 
it follows that $\pi^{-1}(N)$ has a generating set where each element has word length (with respect to $A$) uniformly bounded by some constant  $C(n, \kappa)$. Hence there is a generating set of $N$ where the word length of each element is uniformly bounded by $C(n, \kappa)$.  Similarly, there exists a parabolic element $g$ in this generating set of word length bounded by $C(n, \kappa)$ in elements  $\alpha$. 
This argument provides an alternative proof of 
 the existence of a parabolic isometry of uniformly bounded word length in  $\Gamma_{\varepsilon}(x)$.  
\end{remark}

The methods of the proof of the above results are insufficient for treating nonelementary discrete subgroups generated by elliptic elements. After proving our results we learned about the recent paper by Breuillard and Fujiwara  which can handle this case. Their theorem also implies Theorem \ref{proposition 3.16}. We decided to keep the proof of 
our theorem since it presents independent interest and is used in our subsequent paper \cite{DKL}. 

Given a finite subset $A$ of isometries of a metric space $X$, we let $A^m$ denote the subset of $\Isom(X)$ consisting of products of $\le m$ elements of $A$. 
Furthermore, define 
$$
L(A)=\inf_{x\in X}\max_{g\in A} d(x, gx).$$
If $X$ is a Hadamard space then $L(A)$ satisfies the inequality 
$$
L(A^m)\ge \frac{\sqrt{m}}{2} L(A A^{-1}),
$$
see \cite[Proposition 3.6]{BF}. If, in addition, $X$ is an $n$-dimensional  Riemannian manifold of sectional curvature bounded below by $-\kappa^2$, and the subgroup 
$\langle A \rangle < \Isom(X)$ is discrete and nonelementary, then $L(A)>  \varepsilon(n,\kappa)$, the Margulis constant of $X$. We will need the following result  
proven in \cite[Theorem 13.1]{BF}: 

\begin{theorem}
[Breuillard and Fujiwara] \label{thm:BF}
There exists an absolute constant $C>0$ such that for every $\delta$-hyperbolic space $X$ and every subset $A\subset \Isom(X)$ generating a nonelementary subgroup 
$\Gamma$ one of the following holds:

(i) $L(A)\le C\delta$. 

(ii) If $m> C$ then $\Gamma$ contains a loxodromic element of word-length $\le m$. 
\end{theorem}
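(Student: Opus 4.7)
The plan is to establish the dichotomy by showing that failure of (i) forces (ii). Assume $L(A) > C\delta$ for a suitably large absolute constant $C$ to be determined; otherwise we are done. Choose a basepoint $x_0 \in X$ that nearly realizes the infimum in the definition of $L(A)$, so that $d(x_0, g x_0) \asymp L(A)$ for every $g \in A$. The strategy is then to locate an element $w \in A^m$ whose orbit $\{w^k x_0\}$ is a uniform quasigeodesic, which is the standard criterion for $w$ to be loxodromic in a $\delta$-hyperbolic space.

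First I would exploit the joint displacement growth inequality $L(A^m) \ge \tfrac{\sqrt{m}}{2} L(AA^{-1})$ from \cite[Proposition 3.6]{BF}. Since $\Gamma$ is nonelementary, one has $L(AA^{-1}) \ge L(A) > C\delta$ (up to an additive constant $O(\delta)$), so $L(A^m)$ grows at least like $\sqrt{m}\,\delta$. In particular, for $m > C$ (adjusting the constant) there exists $w \in A^m$ with $d(x_0, w x_0)$ much larger than $\delta$. This gives a candidate element of bounded word length with large displacement; the remaining task is to promote large displacement to \emph{positive translation length}.

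Next I would invoke the standard criterion for loxodromicity in $\delta$-hyperbolic spaces: if an isometry $w$ satisfies $d(x_0, w^2 x_0) \ge 2 d(x_0, w x_0) - K\delta$ for some absolute constant $K$, then its orbit on $x_0$ is a quasigeodesic and $w$ is loxodromic with positive translation length. Equivalently, if $w$ fails to be loxodromic (is elliptic or parabolic), then $d(x_0, w^k x_0)$ grows sublinearly in $k$, contradicting large displacement under iteration. The key lemma here is the Gromov product computation showing that a large ``jump'' $d(x_0, w x_0)$ combined with the hyperbolicity inequality forces the concatenation $[x_0, w x_0] \ast [w x_0, w^2 x_0] \ast \cdots$ to be a uniform quasigeodesic.

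The main obstacle is showing that one cannot always be trapped in a ``short-orbit'' situation: namely, to guarantee some $w \in A^m$ of bounded word length simultaneously has (a) large displacement at $x_0$, and (b) satisfies the quasigeodesic iteration criterion rather than folding back on itself. The way to handle this is a ping-pong style argument: by nonelementarity, one can find two elements $g, h \in A^{O(1)}$ whose axes (or translation directions at $x_0$) are sufficiently transverse at the Gromov-product level, and then $w = g h$ (or a slightly longer word) has iterations that necessarily escape to infinity. Combining this with the quantitative growth of $L(A^m)$ yields the desired element of word length $\le m$, which is the content of \cite[Theorem 13.1]{BF}.
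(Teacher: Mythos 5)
First, a point of calibration: the paper does not prove this statement---it is imported verbatim from Breuillard--Fujiwara \cite[Theorem 13.1]{BF} and used as a black box (only Corollary \ref{cor:BF} is derived from it in the text). So there is no in-paper argument to compare against, and your sketch has to stand on its own. It does not, for three concrete reasons. (a) The claim that nonelementarity gives $L(AA^{-1})\ge L(A)-O(\delta)$ is false. Take $A=\{g,h\}$ in $\H^2$ with $g,h$ loxodromic of the same large translation length $T$ along distinct axes that fellow-travel over a segment of length $\gg T$ around a point $p$: then $\langle A\rangle$ is nonelementary and $L(A)\ge T$, but $AA^{-1}=\{1,gh^{-1},hg^{-1}\}$ displaces $p$ by $O(1)$, so $L(AA^{-1})$ is bounded while $L(A)$ is arbitrarily large. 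Without this inequality, the growth estimate $L(A^m)\ge \frac{\sqrt m}{2}L(AA^{-1})$ gives you nothing. (b) Even where that growth estimate applies, it is stated in \cite[Proposition 3.6]{BF} for Hadamard (CAT(0)) spaces; the theorem you are proving is about arbitrary $\delta$-hyperbolic spaces, where that exact convexity-based inequality is not available and BF must argue differently. (c) Most importantly, large displacement of a single element $w$ at $x_0$ does not imply $w$ is loxodromic---a parabolic can displace every point by an arbitrarily large amount---and you acknowledge this but then resolve it only by gesturing at ``a ping-pong style argument'' whose existence, with the required uniform bound on word length, is precisely the hard content of the theorem. Your closing clause, that the needed transversality ``is the content of \cite[Theorem 13.1]{BF},'' makes the argument circular: you are invoking the theorem to finish its own proof.

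The salvageable core is your step involving the iteration criterion $d(x_0,w^2x_0)\ge 2d(x_0,wx_0)-K\delta$ (a correct and standard sufficient condition for loxodromicity, and indeed one of BF's tools). What is missing is the actual mechanism that produces, from a generating set with $L(A)>C\delta$, a word of uniformly bounded length satisfying that criterion; in \cite{BF} this requires a genuinely nontrivial quantitative argument (a Serre/Koubi-type lemma bounding how many ``bad'' directions there can be at the Gromov-product level), not merely the displacement growth of $L(A^m)$. If your intent is only to use the theorem, cite it as the paper does; if your intent is to prove it, the ping-pong step must be made explicit and quantitative, and the reduction via $L(AA^{-1})$ must be abandoned or corrected.
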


This theorem implies: 

\begin{corollary}[Breuillard and Fujiwara]\label{cor:BF} 
  There exists a function $N=N(n, \kappa)$ satisfying the following. 
Suppose that $X$ is a negatively curved Hadamard manifold whose sectional curvature belongs to the interval $[-\kappa^2, -1]$. Then 
for any subset $A=A^{-1}\subset \Isom(X)$ generating a discrete nonelementary subgroup $\Gamma< \Isom(X)$, there exists a  loxodromic element of word-length $\le N$. 
\end{corollary}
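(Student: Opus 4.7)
The plan is to apply Theorem \ref{thm:BF} not to $A$ itself, but to the ``thickened'' generating set $A^{m_0}$ for an appropriate $m_0 = m_0(n,\kappa)$ chosen so that alternative (i) of that theorem is forced to fail; the quantitative form of alternative (ii) will then deliver a loxodromic element of uniformly bounded word-length in $A$. Two preliminary observations underlie the argument. First, since the sectional curvature of $X$ is bounded above by $-1$, the hyperbolicity constant $\delta$ of $X$ satisfies $\delta \le \cosh^{-1}(\sqrt{2})$, so $C\delta$ is an absolute constant. Second, the Margulis-based fact recalled just before Theorem \ref{thm:BF} gives $L(S) > \varepsilon(n,\kappa)$ whenever $S \subset \Isom(X)$ generates a discrete nonelementary subgroup.

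The crux of the argument is the lower bound $L(AA^{-1}) > \varepsilon(n,\kappa)$. Let $\Gamma' = \langle AA^{-1}\rangle$. Because $A = A^{-1}$, for any $a, b \in A$ both $a\cdot a$ and $a\cdot b$ lie in $AA \subset \Gamma'$, so in the quotient $\Gamma/\Gamma'$ one has $\bar{a}^{2} = 1$ and $\bar{a}\bar{b} = 1$; hence all images $\bar{a}$ with $a\in A$ coincide with a single element of order $\le 2$, and $[\Gamma : \Gamma'] \le 2$. A finite-index subgroup of a discrete nonelementary subgroup of $\Isom(X)$ is itself discrete and nonelementary (their limit sets coincide), so applying the Margulis fact to the generating set $AA^{-1}$ of $\Gamma'$ yields $L(AA^{-1}) > \varepsilon(n,\kappa)$.

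Combining this with the Breuillard--Fujiwara inequality $L(A^m) \ge \tfrac{\sqrt{m}}{2}\, L(AA^{-1})$, I will choose $m_0 = m_0(n,\kappa)$ large enough that $\tfrac{\sqrt{m_0}}{2}\,\varepsilon(n,\kappa) > C \cosh^{-1}(\sqrt{2}) \ge C\delta$. Then $L(A^{m_0}) > C\delta$, and since $A^{m_0}$ still generates the nonelementary subgroup $\Gamma$, alternative (i) of Theorem \ref{thm:BF} applied to $A^{m_0}$ fails. Therefore alternative (ii) holds, and taking $m = \lceil C \rceil + 1$ produces a loxodromic element of $\Gamma$ of word-length at most $\lceil C\rceil + 1$ with respect to the generating set $A^{m_0}$. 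Because every letter in $A^{m_0}$ is a product of at most $m_0$ elements of $A$, this loxodromic element has word-length at most $N(n,\kappa) := m_0(n,\kappa)\,(\lceil C\rceil + 1)$ in the original set $A$, which is the required bound.

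The only mildly delicate step is the index computation showing that $\Gamma'$ remains nonelementary, so that the Margulis bound applies to $AA^{-1}$; once that is in hand, the remainder is a direct invocation of the Breuillard--Fujiwara inequality and theorem. I expect no further obstacles, since the hyperbolicity constant $\delta$ is absolute in our setting and the Margulis constant $\varepsilon(n,\kappa)$ depends only on $n$ and $\kappa$, so $m_0$ and hence $N$ depend only on $n$ and $\kappa$ as required.
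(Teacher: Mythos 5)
Your proposal is correct and follows essentially the same route as the paper: bound $L(AA^{-1})$ below by the Margulis constant, use the inequality $L(A^m)\ge \frac{\sqrt{m}}{2}L(AA^{-1})$ to force alternative (i) of Theorem \ref{thm:BF} to fail for a suitable power $A^{m_0}$, and then convert the word-length bound from alternative (ii) back to $A$. The only (harmless) divergence is that the paper applies the Margulis bound directly to $A$ itself, writing $L(A^k)\ge\frac{\sqrt{k}}{2}L(A)>\frac{\sqrt{k}}{2}\varepsilon(n,\kappa)$, whereas you bound $L(AA^{-1})$ via the index-$\le 2$ subgroup $\langle AA^{-1}\rangle$ --- a slightly more roundabout but equally valid (indeed somewhat more careful) justification.
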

\proof By the Margulis lemma, $L(A)>\varepsilon(n, \kappa)=\mu$. Moreover, $\delta=\cosh^{-1}(\sqrt{2})$ and, 
as noted above, 
$$
L(A^{k})\geq \dfrac{\sqrt{k}}{2}L(A)\geq \dfrac{\sqrt{k}}{2} \mu.$$
Therefore, by Theorem \ref{thm:BF}, for  
$$
m= N(n, \kappa) := \left\lceil (C+1) \left(\dfrac{2C\delta}{\mu}\right)^{2} \right\rceil $$
 the set $A^{m}$ contains a loxodromic element.  \qed

\section{A generalization of Bonahon's theorem}\label{sec:Bohanon}
\label{sec:generalization}

In this section, we use the construction in Section \ref{sec:loxodromic} to generalize Bonahon's theorem for any discrete subgroup $\Gamma < \textup{Isom}(X)$ where $X$ is a negatively pinched Hadamard manifold. 

\begin{lemma}
\label{lemma 3.8}
For every $\tilde{x}\in \Hull(\Lambda(\Gamma))$, 
$$\textup{hd}(\textup{QHull}(\Gamma \tilde{x}), \textup{QHull}(\Lambda (\Gamma)))< \infty$$
\end{lemma}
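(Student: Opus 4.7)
Set $Y := \textup{QHull}(\Lambda(\Gamma))$ and $Z := \textup{QHull}(\Gamma \tilde{x})$. The plan is to bound the Hausdorff distance $\hd(Z, Y)$ by establishing two one-sided containments, each with a uniform radius.

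First I would establish the inclusion $Z \subseteq \bar{N}_R(Y)$ for some uniform $R$. Observe that $Y$ is $\Gamma$-invariant (since $\Gamma$ acts on $\Lambda(\Gamma)$ and permutes bi-infinite geodesics with endpoints in $\Lambda(\Gamma)$), and by Remark \ref{uniformly constant for convex} there is an absolute constant $r$ with $\Hull(\Lambda(\Gamma)) \subseteq \bar{N}_r(Y)$. Since $\tilde{x} \in \Hull(\Lambda(\Gamma))$, $\Gamma$-equivariance gives $\Gamma \tilde{x} \subseteq \bar{N}_r(Y)$. Now $Y$ is $2\delta$-quasiconvex by Remark \ref{uniformly constant for convex}, so its $r$-neighborhood is $(2\delta + C)$-quasiconvex in the $\delta$-hyperbolic space $X$ for some $C = C(r,\delta)$; equivalently, for any two points $p, q \in \bar{N}_r(Y)$, the geodesic $pq$ lies in $\bar{N}_R(Y)$ with $R = r + 2\delta + C$. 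Applied to $p = \gamma_1 \tilde{x}$, $q = \gamma_2 \tilde{x}$ for arbitrary $\gamma_1, \gamma_2 \in \Gamma$, this gives $Z \subseteq \bar{N}_R(Y)$.

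Next I would establish the reverse inclusion $Y \subseteq \bar{N}_{2\delta}(Z)$ via Lemma \ref{hyper}. Let $p \in Y$; then $p$ lies on some bi-infinite geodesic $\xi\eta$ with $\xi, \eta \in \Lambda(\Gamma)$. By definition of the limit set, there exist sequences $(\gamma_i \tilde{x})$ and $(\gamma'_i \tilde{x})$ in $\Gamma \tilde{x}$ converging in $\bar{X}$ to $\xi$ and $\eta$ respectively. Lemma \ref{hyper} then yields $p \in \bar{N}_{2\delta}\bigl((\gamma_i \tilde{x})(\gamma'_i \tilde{x})\bigr)$ for all sufficiently large $i$, and each such geodesic segment is contained in $Z$ by the definition of the quasiconvex hull. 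Hence $Y \subseteq \bar{N}_{2\delta}(Z)$.

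Combining the two inclusions, $\hd(Z, Y) \leq \max(R, 2\delta) < \infty$. The only nontrivial step is the first containment, where I need to convert quasiconvexity of $Y$ into quasiconvexity of its $r$-neighborhood; this is a standard fact for $\delta$-hyperbolic spaces and presents no real obstacle. The second containment is immediate from $\delta$-hyperbolicity, once one remembers that the orbit $\Gamma \tilde x$ accumulates exactly on $\Lambda(\Gamma)$.
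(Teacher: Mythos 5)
Your proof is correct and follows essentially the same two-inclusion strategy as the paper; the reverse inclusion $\textup{QHull}(\Lambda(\Gamma))\subseteq \bar{N}_{2\delta}(\textup{QHull}(\Gamma\tilde{x}))$ via Lemma \ref{hyper} is word-for-word the paper's argument. For the forward inclusion the paper is slightly more direct: since $\Hull(\Lambda(\Gamma))$ is convex, $\Gamma$-invariant, and contains $\tilde{x}$, it already contains every geodesic between orbit points, so $\textup{QHull}(\Gamma\tilde{x})\subseteq \Hull(\Lambda(\Gamma))\subseteq \bar{N}_{r_1}(\textup{QHull}(\Lambda(\Gamma)))$ by Remark \ref{uniformly constant for convex}, which lets you bypass the auxiliary (true but unproved here) fact that $r$-neighborhoods of quasiconvex sets are quasiconvex.
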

\proof 
By the assumption that $\tilde{x}\in \Hull(\Lambda(\Gamma))$   and Remark \ref{uniformly constant for convex}, there exists $r_{1}={\mathfrak r}_{\kappa}(2\delta)\in [0, \infty)$ such that 
$$
\textup{QHull}(\Gamma \tilde{x})\subseteq \Hull(\Lambda (\Gamma)) \subseteq \bar{N}_{r_{1}}(\textup{QHull}(\Lambda(\Gamma)))$$ 
Next, we want to prove that there exists a constant  $r_{2}\in [0, \infty)$ such that $\textup{QHull}(\Lambda(\Gamma))\subseteq \bar{N}_{r_{2}}(\textup{QHull}(\Gamma \tilde{x}))$.   

Pick any point $p\in \textup{QHull}(\Lambda(\Gamma))$.  Then $p$ lies on some geodesic $\xi \eta$ where $\xi, \eta \in \Lambda(\Gamma)$  are distinct points. Since $\xi$ and $\eta$ are in the limit set, there exist sequences of elements $(f_{i})$ and $(g_{i})$ in $\Gamma$ such that the sequence $(f_{i}(\tilde{x}))$ converges to $\xi$ and the sequence $(g_{i}(\tilde{x}))$ converges to $\eta$. By Lemma \ref{hyper}, $p\in \bar{N}_{2\delta}(f_{i}(\tilde{x})g_{i}(\tilde{x}))$ for all  sufficiently large $i$. Let $r=\textup{max} \lbrace r_{1}, 2\delta \rbrace$. Thus,  
$$
\textup{hd}(\textup{QHull}(\Gamma \tilde{x}), \textup{QHull}(\Lambda(\Gamma)))=r < \infty. \qedhere$$

\begin{remark}
Let $\gamma_{i}=f_{i}(\tilde{x}) g_{i}(\tilde{x})$. Then there exists a sequence of points 
$p_{i}\in \ga_i$, which converges to $p$. 
\end{remark}

If $\Gamma< \Isom(X)$ is geometrically infinite, then
$$
\Core(M)\cap \textup{noncusp}_{\varepsilon}(M)$$ 
is noncompact, \cite{Bo2}. By Lemma \ref{lemma 3.8}, $(\textup{QHull}(\Gamma \tilde{x})/ \Gamma) \cap \textup{noncusp}_{\varepsilon}(M)$ is unbounded. 

\medskip 
We now  generalize Bonahon's theorem to geometrically infinite discrete subgroup $\Gamma < \textup{Isom}(X)$. \\

\noindent
{\bf Proof of the implication $(1)\Rightarrow (2)$ in Theorem \ref{theo 1.3}:}
If there exists a sequence of closed geodesics $\beta_{i}\subseteq M$ whose lengths tend to $0$ as $i\rightarrow \infty$,  the sequence $( \beta_{i} )$ escapes every compact subset of $M$. From now on, we assume that there exists a constant $\epsilon>0$ which is a lower bound on  the lengths  of  closed geodesics $\beta$ in $M$.

Consider Margulis cusps  $T_{\varepsilon}(G)/G$, where $G< \Gamma$ are maximal parabolic subgroups. There exists a  constant $r\in [0, \infty), r={\mathfrak r}_{\kappa}(\delta)$ 
such that 
$$
\textup{Hull}(T_{\varepsilon}(G))\subseteq \bar{N}_{r}(T_{\varepsilon}(G))$$ 
for every maximal parabolic subgroup $G$ (see Section \ref{sec:elementary}). Let $B(G)=\bar{N}_{2+4\delta}(\textup{Hull}(T_{\varepsilon}(G)))$.  Let $M^{o}$ be the union of all subsets $B(G)/ \Gamma$ where $G$ ranges over all maximal parabolic subgroups of $\Gamma$. Further, we let $M^{c}$ denote the closure of $\textup{Core}(M)\setminus M^{o}$. Since $\Gamma$ is geometrically infinite, the noncuspidal part of the convex core, 
$$
\noncusp_{\varepsilon} (\Core(M)= \textup{Core}(M)) \setminus \textup{cusp}_{\varepsilon}(M)$$
is unbounded  by Theorem \ref{thm:gfcharacterization}. Then $M^{c}$ is also unbounded since 
$$
M^{o}\subseteq \bar{N}_{r+2+4\delta}(\textup{cusp}_{\varepsilon}(M)),$$ 

Fix a point $x\in M^{c}$ and a point $\tilde{x}\in \pi^{-1}(x)\subset X$. Let 
$$
C_{n}=B(x, nR)=\lbrace y\in M^{c}\mid d(x, y)\leq nR \rbrace,$$
 where 
 $$R=r+2+4\delta+m\varepsilon$$ 
 and  $m=C(n, \kappa)$ is the constant in Proposition \ref{produce parabolic element}.
Let $\boldsymbol{\delta} C_{n}$ denote the relative boundary 
$$
\partial C_{n} \setminus \partial M_{cusp}^{c}$$
 of $C_{n}$ where 
 $$M_{cusp}^{c}=M^{o} \cap \textup{Core}(M).$$

By Lemma \ref{lemma 3.8} $(\textup{QHull}(\Gamma \tilde{x})/ \Gamma ) \cap M^{c}$ is unbounded. For every $C_{n}$, there exists a sequence of geodesic loops $( \gamma_{i} )$ connecting $x$ to itself in $\textup{Core}(M)$ such that the Hausdorff distance hd$(\gamma_{i} \cap M^{c}, C_{n})\rightarrow \infty$ as $i\rightarrow \infty$. Let $y_{i}\in \gamma_{i}\cap M^{c}$ be such that $d(y_{i}, C_{n})$ is maximal on $\gamma_{i}\cap M^{c}$. We pick a component $\alpha_{i}$ of $\gamma_{i}\cap M^{c}$ in the complement of $C_{n}$ such that $y_{i}\in \alpha_{i}$.  Consider the sequence of geodesic arcs $( \alpha_{i} )$. 

\medskip 
After passing to a subsequence in $(\alpha_{i})$, one of the following three cases occurs: 

Case (a): Each $\alpha_{i}$ has both endpoints  $x'_{i}$ and $x''_{i}$  on $\partial M_{cusp}^{c}$ as in Figure 10(a). 
By the construction, there exist $y'_{i}$ and $y''_{i}$ in the cuspidal part such that $d(x'_{i}, y'_{i})\leq r_{1}, d(y'_{i}, y''_{i})\leq r_{1}$ where $r_{1}=2+4\delta+r$. Let $\tilde{y}'_{i}$ be a lift of $y'_{i}$ such that $\tilde{y}'_{i}\in T_{\varepsilon}(G')$ for some maximal parabolic subgroup $G'<\Gamma$. By  the definition, the subgroup $\Gamma_{\varepsilon}(\tilde{y}'_{i})$ generated by the set $$\calF_{\varepsilon}(\tilde{y}'_{i})= \lbrace \gamma \in G' \mid d(\tilde{y}'_{i}, \gamma (\tilde{y}'_{i}))\leq \varepsilon \rbrace$$ is infinite. 

We claim that there exists a parabolic element $g'\in \Gamma_{\varepsilon}(\tilde{y}'_{i})$ such that $d(\tilde{y}'_{i}, g'(\tilde{y}'_{i}))\leq m\varepsilon.$ Assume that $\calF_{\varepsilon}(\tilde{y}'_{i})=\lbrace \gamma_{1}, \cdots, \gamma_{b} \rbrace$. If $\gamma_{j}$ is parabolic for some $1\leq j \leq b$, we have $d(\tilde{y}'_{i}, \gamma_{j}(\tilde{y}'_{i}))\leq \varepsilon$. Now assume that $\gamma_{j}$ are elliptic for all $1\leq j \leq b$. By Proposition \ref{produce parabolic element}, there is a parabolic element $g'\in \Gamma_{\varepsilon}(\tilde{y}'_{i})$ of word length (in the generating set $\calF_{\varepsilon}(\tilde{y}'_{i})$) bounded by $m$. 
By the  triangle inequality, $d(\tilde{y}'_{i}, g'(\tilde{y}'_{i}))\leq m\varepsilon$.

Then we find a nontrivial  geodesic loop  $\alpha '_{i}$ contained $M^{o}$ such that $\alpha '_{i}$ connects $y'_{i}$ to itself and has length $l(\alpha '_{i})\leq m\varepsilon$. Similarly, there exists a nontrivial geodesic loop $\alpha ''_{i}$ which  connects $y''_{i}$ to itself and has length $l(\alpha ''_{i})\leq m\varepsilon$. Let $$w'=x'_{i}y'_{i} \ast \alpha '_{i} \ast y'_{i}x'_{i} \in \Omega(M, x'_{i})$$ 
and 
$$w ''=\alpha_{i}\ast x''_{i}y''_{i}\ast \alpha ''_{i}\ast y''_{i}x''_{i}\ast \alpha_{i}^{-1}\in \Omega(M, x'_{i}),$$  
where $\Omega(M, x'_{i})$ denotes the loop space of $M$.  Observe that $w'\cap C_{n-1}=\emptyset$ and $w'' \cap C_{n-1}=\emptyset$. 

Let $g', g''$ denote the elements of $\Gamma=\pi_1(M, x_i')$ represented by $w'$ and $w''$ respectively. By the construction, $g'$ and $g''$ are both parabolic.  We claim that $g'$ and $g''$ have different fixed points in $\geo X$. Otherwise, $g, g''\in G'$ where $G'<\Gamma$ is some maximal parabolic subgroup. Then $y'_{i}, y''_{i}\in T_{\varepsilon}(G')/ \Gamma$ and $x'_{i}, x''_{i}\in B(G')/ \Gamma$. Since $\textup{Hull}(T_{\varepsilon}(G'))$ is convex, $B(G')=\bar{N}_{2+4\delta}(\textup{Hull}(T_{\varepsilon}(G')))$ is also convex by  convexity of the distance function. Thus,  $x'_{i}x''_{i}\subseteq B(G')/ \Gamma$. However, $x'_{i}x''_{i}$ lies outside of $B(G') / \Gamma$ by construction, which is a contradiction. 

By Theorem \ref{proposition 3.16}, there exists a loxordomic element $\omega_{n}\in \langle g', g'' \rangle < \Gamma= \pi_1(M, x_i')$ 
with the word length uniformly bounded by a  constant $K={\mathfrak k}(\varepsilon,\kappa)$ independent of $n$. 
Let $w_{n}$ be a concatenation of $w'_{i}, w''_{i}$ and their reverses which represents $\omega_{n}$. 
Then  the number of geodesic arcs in $w_{n}$ is uniformly bounded by $5K$. The piecewise geodesic loop $w_{n}$ is  freely homotopic to a closed geodesic $w^{\ast}_{n}$ in $M$; hence,  by Proposition \ref{free homotopic}, $w^{\ast}_{n}$ is contained in some $D$-neighborhood of the loop $w_{n}$ where 
$$
D=\cosh^{-1}(\sqrt{2})\lceil \log_{2} 5K \rceil+\sinh^{-1}(2/\epsilon)+2\delta.
$$
Thus, $d(x, w^{\ast}_{n})\geq (n-1)R-D$. \\

\begin {figure}[H]
\centering
\includegraphics[width=6.0in]{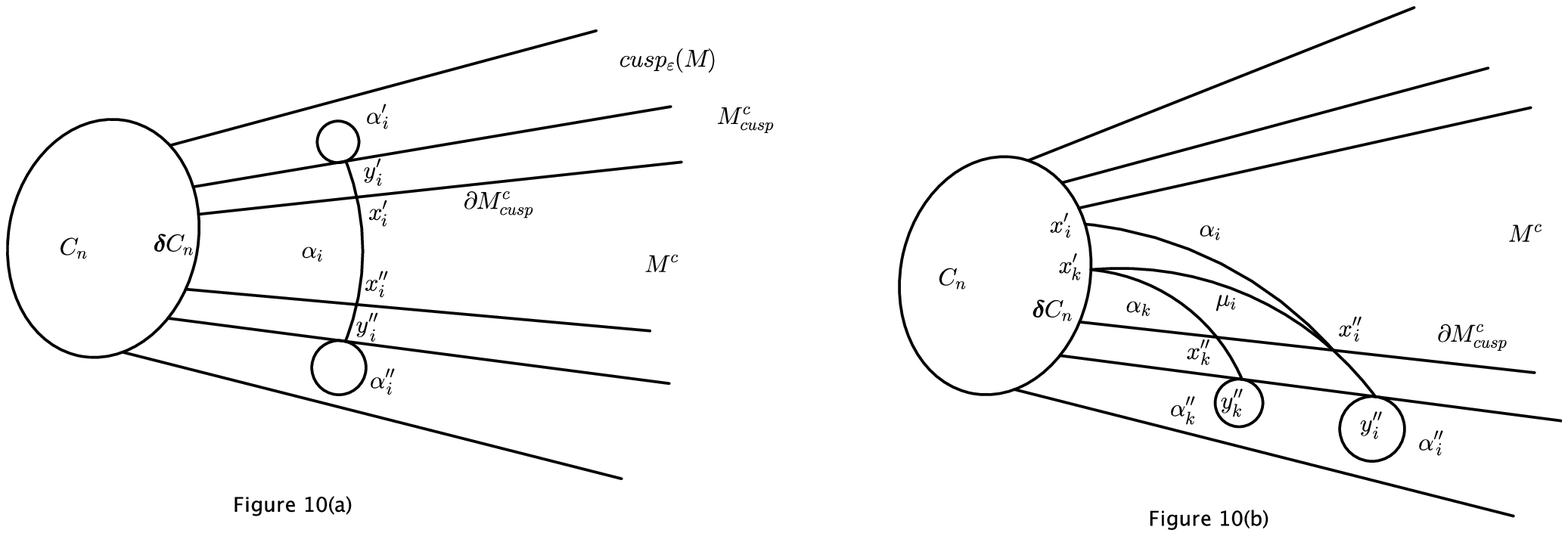}

\caption{}
\end{figure}

Case (b):  For each $i$, the geodesic arc $\alpha_{i}$ connects $x'_{i}\in \boldsymbol{\delta} C_{n}$ to $x''_{i}\in \partial M^{c}_{cusp}$, as in Figure 10(b). For each  $x''_{i}$, there exists a point $y''_{i}\in \textup{cusp}_{\varepsilon}(M)$ such that $d(x''_{i}, y''_{i})\leq r_{1}$ and a  short nontrivial geodesic loop  $\alpha ''_{i}$ contained in $M^{o}$ which connects $y''_{i}$ to itself and has length $l(\alpha ''_{i})\leq m\varepsilon$. 
Since $\boldsymbol{\delta} C_{n}$ is compact, after passing to a further subsequence in $(\alpha_i)$, there exists $k\in \mathbb{N}$ such that for all 
$i \geq k$, $d(x'_{i}, x'_{k}) \leq 1$ and less than the injectivity radius of $M$ at $x'_k$. Hence, there exists a unique shortest geodesic $x_k' x_i'$ in the manifold 
$M$. Let $\mu _{i}=x'_{k}x''_{i}$ denote the geodesic arc homotopic to the concatenation $x_k' x_i' * x_i' x_i''$ rel. $\{x_i', x_i''\}$. Then, by the $\delta$-hyperbolicity of $X$, the geodesic  $\mu _{i}=x'_{k}x''_{i}$ is contained  in the $(1+\delta)$-neighborhood of $\alpha_{i}$. 

Let 
$$w'_{k}=\alpha_{k}\ast x''_{k}y''_{k}\ast \alpha ''_{k}\ast y''_{k}x''_{k}\ast \alpha_{k}^{-1}\in \Omega(M, x'_{k})$$
and 
$$w ' _{i}=\mu_{i}\ast x''_{i}y''_{i}\ast \alpha ''_{i}\ast y''_{i}x''_{i}\ast (\mu_{i})^{-1}\in \Omega(M, x'_{k})$$ 
for all $i>k$. By the construction,  $w'_{i}\cap C_{n-1}=\emptyset$ for each $i\geq k$. 

Let $g_{i}$ denote the element of $\Gamma=\pi_1(M, x'_{k})$ represented by $w'_{i}$, $i\geq k$. Then each $g_{i}$ is parabolic. We claim that there exists a pair of indices $i, j\geq k$ such that $g_{i}$ and $g_{j}$ have distinct fixed points. Otherwise, assume that all parabolic elements $g_{i}$ have the same fixed point $p$. 
Then $x''_{i}\in B(G')/ \Gamma$ for any $i\geq k$ where $G'=\textup{Stab}_{\Gamma}(p)$. 

Since $\mu_{i}\cup \alpha_k$ is in the $(1+\delta)$-neighborhood of $M^{c}$, by the $\delta$-hyperbolicity of $X$ we have that  
$x''_{k}x''_{i}$ is in $(1+2\delta)$-neighborhood of $M^{c}$ for every $i > k$. By the definition of $M^c$, it follows that 
$$
x''_{k}x''_{i}\cap \bar{N}_{\delta}(\textup{Hull}(T_{\varepsilon}(G')))/ \Gamma=\emptyset.$$ 
By the construction, the length $l(\alpha_{i})\rightarrow \infty$ as $i\rightarrow \infty$. Hence, the length $l(\mu_{i})\rightarrow \infty$ and the length $l(x''_{k}x''_{i})\rightarrow \infty$  as $i\rightarrow \infty$. By Lemma \ref{enter cusp}, there exists points $z_{i}\in x''_{k}x''_{i}$ such that 
$z_{i}\in \bar{N}_{\delta}(T_{\varepsilon}(G'))/ \Gamma$ for sufficiently large $i$.  Therefore, 
$$
x''_{k}x''_{i}\cap \bar{N}_{\delta}(\textup{Hull}(T_{\varepsilon}(G')))/ \Gamma\neq \emptyset,$$ 
 which is a contradiction. 
 
 We conclude that for some $i, j\ge k$, the parabolic elements $g_i, g_j$ of $\Ga$ have distinct fixed points and, hence, generate a nonelementary subgroup 
 of $\Isom(X)$. By Theorem \ref{proposition 3.16}, there exists a loxodromic element $\omega_{n}\in \langle g_{i}, g_{j} \rangle$ with the word length uniformly bounded by a constant $K$. By the same  argument as  in Case (a), we obtain a closed geodesic $w^{\ast}_{n}$ 
 (representing the conjugacy class of $\omega_n$) in $M$ such that $d(x, w^{\ast}_{n})\geq (n-1)R-D$.   \\

\begin {figure}[H]
\centering
\includegraphics[width=7.0in]{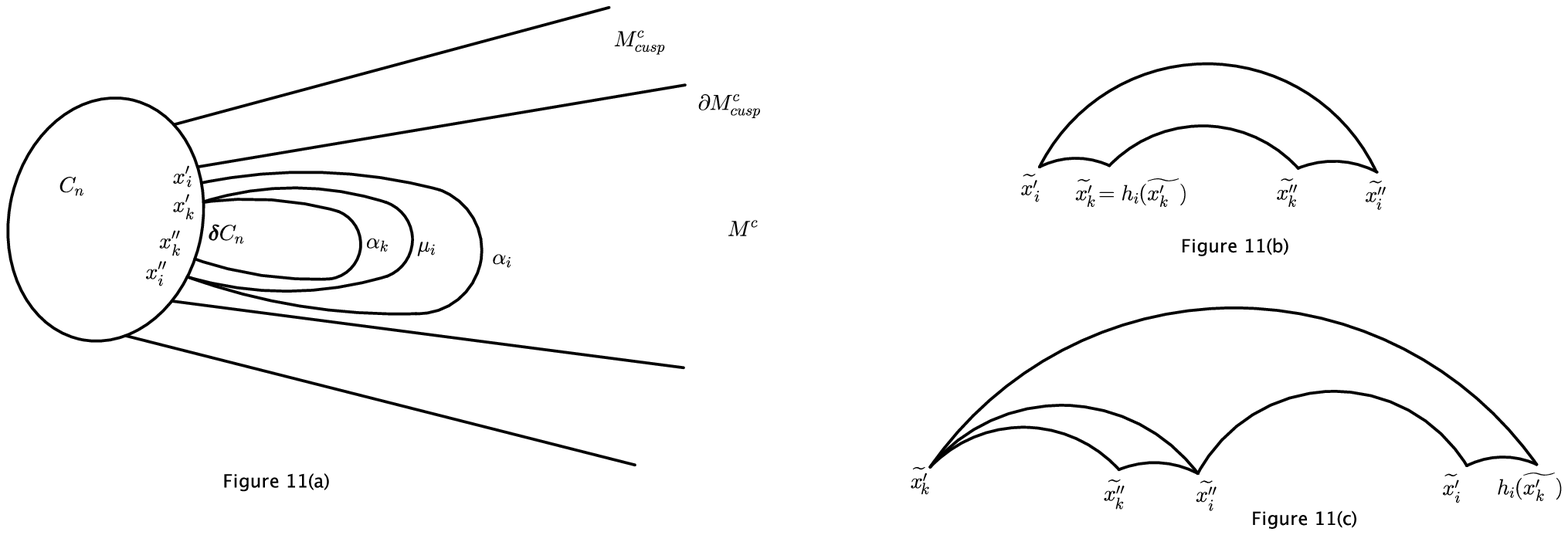}

\caption{}
\end{figure}

Case (c): We assume that for each $i$, the  geodesic arc $\alpha_{i}$ connects 
$x'_{i}\in \boldsymbol{\delta} C_n$ to $x ''_{i}\in \boldsymbol{\delta} C_{n}$. The argument is similar to the one in Case (b). 
Since $\boldsymbol{\delta} C_{n}$ is compact, after passing to a further subsequence in $(\alpha_i)$, there exists $k\in \mathbb{N}$ such that for all 
$i \geq k$, $d(x'_{i}, x'_{k})\leq 1$, $d(x''_{i}, x''_{k}) \leq 1$ and there are unique shortest geodesics $x_k' x_i'$ and $x''_{k}x''_{i}$. For each $i> k$ we define a geodesic 
$\mu_{i}=x'_{k}x''_{i}$ as in Case (b), see Figure 11(a). Then, by the $\delta$-hyperbolicity of $X$,  each $\mu_{i}$ is in the $(\delta+1)$-neighborhood of $\alpha_{i}$. 
Let $v_{i}=\alpha_{k} \ast x''_{k}x''_{i}\ast (\mu_{i})^{ -1}\in \Omega(M, x'_{k})$ for  $i> k$. By the construction, $v_{i}\cap C_{n-1}=\emptyset$. 

 Let $h_{i}$ denote the element in $\Gamma=\pi_1(M, x'_{k}) $ represented by $v_{i}$.  If $h_{i}$ is loxodromic for some $i > k$, there exists a closed geodesic $w^{\ast}_{n}$ contained in the $D$-neighborhood of $v_{i}$, cf. Case (a). In this situation, $d(x, w^{\ast}_{n})\geq (n-1)R-D$. 
 
 Assume, therefore, that $h_i$ are not loxodromic for all $i>k$.

We first claim that $h_{i}$ is not the identity for all sufficiently large $i$.  Let $\widetilde{x'_{k}}$ be a lift of $x'_{k}$ in $X$. Pick points $\widetilde{x''_{k}}, \widetilde{x''_{i}}, \widetilde{x'_{i}}$ and $h_{i}(\widetilde{x'_{k}})$ in $X$ such that $\widetilde{x'_{k}}\widetilde{x''_{k}}$ is a lift of $\alpha_{k}$, $\widetilde{x''_{k}}\widetilde{x''_{i}}$ is a lift of $x''_{k}x''_{i}$, $\widetilde{x'_{i}}\widetilde{x''_{i}}$ is a lift of $\alpha_{i}$ and $\widetilde{x'_{i}}h_{i}(\widetilde{x'_{k}})$ is a lift of $x'_{i}x'_{k}$ as in Figure 11(b) and Figure 11(c). If $h_{i}=1$, then $h_{i}(\widetilde{x'_{k}})=\widetilde{x'_{k}}$ and $d(\widetilde{x'_{i}}, \widetilde{x''_{i}})\leq 2+d(\widetilde{x'_{k}}, \widetilde{x''_{k}})$ as in Figure 11(b). By construction, the length $l(\alpha_{i})\rightarrow \infty$ as $i\rightarrow \infty$, so $d(\widetilde{x'_{i}}, \widetilde{x''_{i}})\rightarrow \infty$. Thus for sufficiently large $i$, $h_{i}(\widetilde{x'_{k}})\neq \widetilde{x'_{k}}$. 

Assume, therefore, that $h_{i}$ are not loxodromic and not the identity for all $i >k$. Then $h_{i}$ could be either parabolic  or elliptic for $i> k$.

\begin{claim*}
For every $k$, there exist  $i, j>k$ and  a loxodromic element in $\langle h_{i}, h_{j} \rangle$ whose  word length is  bounded by a constant independent of $k$. 
\end{claim*}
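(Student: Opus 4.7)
The plan is to produce, for some pair $i, j > k$, a nonelementary subgroup $\langle h_i, h_j\rangle$ of $\Gamma$, at which point Corollary \ref{cor:BF} supplies a loxodromic element of word length at most $N(n,\kappa)$, independent of $k$. First I would pass to a subsequence so that all $h_i$ are of the same type: either all parabolic (with unique fixed points $p_i \in \partial_\infty X$) or all elliptic (with nonempty fixed sets $F_i \subset X$). It then suffices to exhibit a pair $h_i, h_j$ whose fixed-point sets are disjoint in $\bar X$, since this alone rules out $\langle h_i, h_j\rangle$ being elementary of the first two types.

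In the parabolic subcase, if all $p_i$ were equal to a common point $p$, then every $h_i$ would lie in $G' := \textup{Stab}_\Gamma(p)$. Mimicking the argument from Case (b), the geodesic segment $\widetilde{x_k''}\widetilde{x_i''}$ (a lift of $x_k'' x_i''$) stays within the $(1+2\delta)$-neighborhood of a lift of $M^c$ and hence avoids $\bar N_\delta(\textup{Hull}(T_\varepsilon(G')))$; but its length tends to $\infty$, so Proposition \ref{enter cusp} would force it to enter that cusp neighborhood for large $i$, a contradiction. Hence $p_i \neq p_j$ for some pair, and Theorem \ref{proposition 3.16} (or directly Corollary \ref{cor:BF}) supplies a loxodromic word of bounded length in $\langle h_i, h_j\rangle$.

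In the elliptic subcase, if infinitely many $h_i$ shared a common fixed point $z \in X$, they would all belong to the finite subgroup $\textup{Stab}_\Gamma(z)$, contradicting the fact that the lengths $l(v_i)$ tend to infinity and therefore yield infinitely many distinct elements of $\Gamma$. A pigeonhole then produces $i, j$ with $F_i \cap F_j = \emptyset$, so $\langle h_i, h_j\rangle$ has no common fixed point in $X$. The only remaining elementary possibility is that $\langle h_i, h_j\rangle$ preserves a bi-infinite geodesic $l$ without fixing its endpoints; since neither generator is loxodromic, both would then act as order-$2$ involutions swapping the endpoints of $l$, in which case $h_i h_j$ is already a translation along $l$ and hence loxodromic — a loxodromic word of length $2$. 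Otherwise $\langle h_i, h_j\rangle$ is discrete and nonelementary, and Corollary \ref{cor:BF} delivers a loxodromic element of word length at most $N(n,\kappa)$.

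The main obstacle is the elliptic subcase: an elliptic isometry has a positive-dimensional totally-geodesic fixed set, and distinct elliptics can have nontrivially intersecting fixed sets without sharing a common fixed point. The delicate point is to extract, from the escape-to-infinity of $(\alpha_i)$ alone, a pair whose fixed-point sets are disjoint enough to preclude a common fixed point of the generated subgroup; once this geometric separation is secured, the remainder is a direct invocation of Corollary \ref{cor:BF}, yielding the claimed uniform bound on word length.
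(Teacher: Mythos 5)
Your overall strategy (reduce to exhibiting a nonelementary pair and then quote Corollary \ref{cor:BF}) has the right shape, but both subcases have problems. In the parabolic subcase you apply the ``enter the cusp'' mechanism to the segment $\widetilde{x''_k}\widetilde{x''_i}$ and assert that its length tends to infinity. That is the Case (b) configuration; in Case (c), where this Claim lives, $x''_i\in\boldsymbol{\delta} C_n$ and $d(x''_k,x''_i)\le 1$, so that segment has length at most $1$ and no contradiction arises. The correct object is the geodesic $\widetilde{x'_k}\,h_i(\widetilde{x'_k})$: its length tends to infinity because $l(\alpha_i)\to\infty$, its two endpoints are equidistant from $\textup{Hull}(T_\varepsilon(G'))$ because $h_i\in G'$ preserves that hull (this equidistance is a hypothesis of Proposition \ref{enter cusp}), and $\delta$-hyperbolicity keeps it outside $\bar{N}_\delta(\textup{Hull}(T_\varepsilon(G')))$ since it stays near the lifts of $\alpha_k$ and $\alpha_i$. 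With that substitution the parabolic subcase closes as in the paper.

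The elliptic subcase is where the real gap lies, and you name it yourself in your closing paragraph. The ``pigeonhole'' from ``no point of $X$ is fixed by infinitely many $h_i$'' to ``some pair has $F_i\cap F_j=\emptyset$'' is not a valid deduction: pairwise intersecting totally geodesic fixed sets need not be concurrent (think of lines in general position in a plane). Moreover, even granted a disjoint pair, you would still have to exclude the possibility that $\langle h_i,h_j\rangle$ is an elementary \emph{parabolic} group generated by elliptics, which disjointness of fixed sets in $X$ does not rule out; your treatment of the geodesic-preserving case (matching Lemma \ref{preserve geodesic}) is fine, but it is not ``the only remaining elementary possibility.'' The paper resolves all of this without ever producing disjoint fixed sets: assuming every pair $\langle h_i,h_j\rangle$ is finite or parabolic elementary, the closures in $\bar X$ of the Margulis regions $B_i=\overline{Mar(h_i,\varepsilon)}$ pairwise intersect; enlarging to $N_{r'}(B_i)$ to move the intersections into $X$ and applying the topological Helly statement (Proposition \ref{Helly}) yields a single point $\tilde z$ with $d(\tilde z, N_{r'}(B_i))\le n\delta$ for all $i$, whence $d(\widetilde{x'_k},h_i(\widetilde{x'_k}))\le 2l+2n\delta+2r'+\varepsilon$ is uniformly bounded, contradicting $d(\widetilde{x'_k},h_i(\widetilde{x'_k}))\to\infty$. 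That Helly/Margulis-region step is the missing idea; without it (or a substitute) your proof of the Claim does not close.
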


\begin{proof}
Suppose there is a subsequence in $(h_{i})_{i>k}$  consisting of parabolic elements. For simplicity,   we assume that $h_{i}$ are parabolic for all $i>k'$ where $k'>k$ is a sufficiently large number.  We claim that there exists a pair of indices $i, j>k'$ such that $h_{i}$ and $h_{j}$ have distinct fixed points in $\geo X$. Otherwise, all the parabolic elements $h_{i}$ have the same fixed point $p$ for $i>k'$.  By the $\delta$-hyperbolicity of $X$, 
$\widetilde{x'_{k}}h_{i}(\widetilde{x'_{k}})\subseteq \bar{N}_{3\delta+2}(\widetilde{x'_{k}}\widetilde{x''_{k}}\cup \widetilde{x''_{i}} \widetilde{x'_{i}})$. Since $\alpha_{k}$ and $\alpha_{i}$ lie outside of $B(G')/ \Gamma$ where $G'=\textup{Stab}_{\Gamma}(p)$, the segment $\widetilde{x'_{k}}h_{i}(\widetilde{x'_{k}})$ lies outside of $\bar{N}_{\delta}(\textup{Hull}(T_{\varepsilon}(G')))$.  Let $r_{3}=d(\widetilde{x'_{k}}, \textup{Hull}(T_{\varepsilon}(G')))$. Then $d(h_{i}(\widetilde{x'_{k}}), \textup{Hull}(T_{\varepsilon}(G')))=r_{3}$. 

By the construction, the length $l(\alpha_{i})\rightarrow \infty$ as $i\rightarrow \infty$. Then the length $l(\widetilde{x'_{k}} h_{i}(\widetilde{x'_{k}}))\rightarrow \infty$ as well. Observe that the points $\widetilde{x'_{k}}$ and $h_{i}(\widetilde{x'_{k}})$ lie on the boundary of $\bar{N}_{r_{3}}(\textup{Hull}(T_{\varepsilon}(G)))$ for all $i>k'$. By Lemma \ref{enter cusp}, there exist points $\widetilde{z_{i}}\in \widetilde{x'_{k}}h_{i}(\widetilde{x'_{k}})$ such that $\widetilde{z_{i}}\in \bar{N}_{\delta}(T_{\varepsilon}(G'))$ for sufficiently large $i$,  which is a contradiction. Hence, for some $i> k', j> k'$, parabolic isometries $h_{i}$ and $h_{j}$ have distinct fixed points.

By Theorem \ref{proposition 3.16}, there exists a loxodromic element $\omega_{n}\in \langle h_{i}, h_{j} \rangle$ of the word length bounded by a uniform constant $K$.

Now assume that $h_{i}$ are elliptic for all $i>k$. 

If there exist $i, j > k$ such that $\langle h_{i}, h_{j} \rangle$ is nonelementary, by Corollary \ref{cor:BF}, there exists a loxodromic element  $\omega_{n}\in \langle h_{i}, h_{j} \rangle$ of word length uniformly bounded by a constant $K$. Now suppose that $\langle h_{i}, h_{j} \rangle$ is elementary for any  pair of indices $i, j>k$. If one of the elementary subgroups is infinite and preserves  a geodesic, by Lemma \ref{preserve geodesic}, $h_{i}h_{j}$ is loxodromic. 

Assume that all the elementary subgroups  $\langle h_{i}, h_{j} \rangle$ are either finite or parabolic for  all $i, j>k$. Let $B_{i}$ denote the closure of $Mar(h_{i}, \varepsilon)$ in $\bar{X}$. If there exist $i, j$ such that $B_{i}$ and $B_{j}$ are disjoint, then $\langle h_{i}, h_{j} \rangle $ is nonelementary which contradicts our assumption.  Thus for any pair of indices $i, j>k$, $B_{i}\cap B_{j}\neq \varnothing$. There exists a uniform constant $r'$ such that $N_{r'}(B_{i})\cap N_{r'}(B_{j})\neq \varnothing$ in $X$. Hence,
 by Proposition \ref{Helly},  there exists $\tilde{z} \in X$ such that for all $i> k$ we have $d(\tilde{z}, N_{r'}(B_{i}))\leq n\delta$. For any $q \in N_{r'}(B_{i})$, $d(q, h_{i}(q))\leq 2r'+\varepsilon$ by the triangle inequality. Thus,
$$d(\tilde{z}, h_{i}(\tilde{z}))\leq 2n\delta+2r'+\varepsilon$$ 
for all $i>k$. Let $\widetilde{x}'_{k}$ denote a lift of $x'_{k}$ in $X$, and $l=d(\tilde{z}, \widetilde{x}'_{k})$. Then 
$$d(\widetilde{x}'_{k}, h_{i}(\widetilde{x}'_{k}))\leq 2l+2n\delta+2r'+\varepsilon$$
for all $i>k$. Note that $d(\widetilde{x}'_{k}, h_{i}(\widetilde{x}'_{k}))\rightarrow \infty $ as $i\rightarrow \infty$, which is a contradiction.  
\end{proof}

Thus, for some pair of indices $i, j>k$, there exists a loxodromic element $\omega_{n}\in \langle h_{i}, h_{j}\rangle$  whose word   length is uniformly bounded by some 
constant $K$.  By the same argument as in Case (a), there exists a closed geodesic $w^{\ast}_{n}$ such that $d(x, w^{\ast}_{n})\geq (n-1)R-D$. 

Thus in all cases, for each ${n}$, the orbifold $M$ contains a closed geodesic $w^{\ast}_{n}$ such that $d(x, w^{\ast}_{n})\geq (n-1)R-D$. 
The sequence of closed geodesics $\lbrace w^{\ast}_{n} \rbrace$, therefore, escapes every compact subset of $M$.  
\qed

\section{Continuum of nonconical limit points}
\label{sec:continuum}

In this section, using the generalized Bonahon theorem in Section \ref{sec:Bohanon}, for each geometrically infinite discrete subgroup $\Gamma< \Isom(X)$ 
we find a set  of nonconical limit points with the cardinality of the continuum. This set of nonconical limit points is used to prove Theorem \ref{theo 1.3}.

\begin{theorem}
\label{Theo 5.1}
If $\Gamma< \Isom(X)$ is a geometrically infinite discrete  isometry subgroup, then the set of nonconical limit points of $\Gamma$ 
has the cardinality of the continuum. 
\end{theorem}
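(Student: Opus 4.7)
The plan is to feed the implication $(1)\Rightarrow(2)$ of Theorem \ref{theo 1.3}, established in Section \ref{sec:generalization}, into the quasigeodesic machinery of Section \ref{sec:quasigeodesics} to manufacture a continuum of nonconical limit points. Since $\Gamma$ is geometrically infinite, that implication produces a sequence of closed geodesics $\lambda_i\subset M$ escaping every compact subset. First I apply Lemma \ref{escape compact set}, passing to a subsequence, to obtain geodesic arcs $b_i\subset M$ joining $\lambda_i$ to $\lambda_{i+1}$, orthogonal to both at points $x_i\in\lambda_i$ and $x_{i+1}\in\lambda_{i+1}$, such that $(b_i)$ also escapes every compact subset of $M$. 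Set $\ell_i := l(\lambda_i)$.

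Next I fix the threshold $L = L(\pi/2,\varepsilon_0)$ from Proposition \ref{qua} (using Proposition \ref{piecewise geodesic path} directly whenever the $b_i$ are uniformly long), choose integers $N_i$ with $N_i\ell_i\ge L$, and for each binary sequence $\tau=(\tau_1,\tau_2,\ldots)\in\{0,1\}^{\mathbb N}$ I build a proper piecewise-geodesic ray
$$
\gamma_\tau \;=\; \nu_1^\tau \ast b_1 \ast \nu_2^\tau \ast b_2 \ast \cdots \;\subset\; M,
$$
where $\nu_i^\tau$ is the loop on $\lambda_i$ based at $x_i$ which wraps $N_i+\tau_i$ times around $\lambda_i$, so that $l(\nu_i^\tau)=(N_i+\tau_i)\ell_i\ge L$. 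By orthogonality of each $b_i$ to $\lambda_i$ and $\lambda_{i+1}$, every interior angle of $\gamma_\tau$ equals $\pi/2$. Lifting $\gamma_\tau$ to $X$ from a fixed preimage $\tilde x_1$ of $x_1$ produces a piecewise geodesic ray $\tilde\gamma_\tau$, and Proposition \ref{qua} (or \ref{piecewise geodesic path}) guarantees that $\tilde\gamma_\tau$ is a uniform quasigeodesic ray with constants independent of $\tau$. By the Morse lemma, $\tilde\gamma_\tau$ lies within a uniform Hausdorff distance $C$ of a unique geodesic ray $\tilde\gamma_\tau^*$ from $\tilde x_1$, which converges to an ideal point $\xi_\tau\in\Lambda(\Gamma)$. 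Projecting down, $\gamma_\tau^*:=\pi(\tilde\gamma_\tau^*)$ is a locally geodesic ray in $M$ whose image is within Hausdorff distance $C$ of $\gamma_\tau$; since $\gamma_\tau$ escapes every compact subset of $M$, so does $\gamma_\tau^*$. The footnote characterization of conical limit points then forces $\xi_\tau$ to be nonconical.

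The main obstacle is showing that the map $\tau\mapsto \xi_\tau$ has image of cardinality $2^{\aleph_0}$. I would argue as follows. Suppose $\tau\ne\tau'$ first differ at index $k$. The lifts $\tilde\gamma_\tau$ and $\tilde\gamma_{\tau'}$ agree on the initial segment up to a common lift $\tilde x_k$, then unfold along the same lift $\tilde\lambda_k\subset X$ for distances that differ by a positive integer multiple of $\ell_k$; thus the subsequent tails issue their next orthogonal connectors from two distinct lifts of $x_k$ separated by an iterate of the loxodromic deck transformation stabilizing $\tilde\lambda_k$. Because the next connector is orthogonal to $\tilde\lambda_k$ and every remaining connector is a lift of some $b_j$ which eventually escapes all compact sets, a direct angle-comparison argument (using Corollary \ref{lemma 2.5}) shows that the two quasigeodesic tails diverge: their Hausdorff distance is infinite. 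Since two uniform quasigeodesic rays from $\tilde x_1$ with the same ideal endpoint must remain within bounded Hausdorff distance (being both close to the same geodesic ray), we conclude $\xi_\tau\ne\xi_{\tau'}$. Hence $\tau\mapsto\xi_\tau$ is injective, producing a set of $2^{\aleph_0}$ distinct nonconical limit points and completing the proof of Theorem \ref{Theo 5.1}.
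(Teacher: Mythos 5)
Your construction and your nonconicality argument coincide with the paper's: escaping closed geodesics from the generalized Bonahon theorem, orthogonal connectors from Lemma \ref{escape compact set}, wrapping arcs to make every edge long and every corner a right angle, Proposition \ref{piecewise geodesic path} plus the Morse lemma to shadow by a geodesic ray, and properness of the projected ray to rule out conicality (you should also say a word about why $\xi_\tau\in\Lambda(\Gamma)$ at all, e.g.\ as a limit of loxodromic fixed points, but that is minor). The genuine gap is in the injectivity step, which you yourself flag as the main obstacle. In your parameterization the two tails branch off from points of the lift $A_k$ of $\lambda_k$ that are only $\ell_k=l(\lambda_k)$ apart, since your wrapping numbers differ by exactly one; and $\ell_k$ can be arbitrarily small (a sequence of closed geodesics with lengths tending to $0$ is one of the typical sources of escaping geodesics in the geometrically infinite case, as the paper's proof of $(1)\Rightarrow(2)$ notes). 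Corollary \ref{lemma 2.5}(2) requires $\cosh(d(A,B))\sin\alpha>1$ and so gives nothing over a short base, and in any case a local divergence estimate on the two initial orthogonal segments cannot separate ideal endpoints: two uniform quasigeodesic rays can move a definite distance apart and still be asymptotic to the same point of $\geo X$. The paper's own injectivity device is also unavailable to you as written: it concatenates the two tails with the subarc of $A_m$ between the branch points into one bi-infinite piecewise geodesic and applies Proposition \ref{piecewise geodesic path} to conclude that its two ends converge to distinct ideal points; this needs the middle segment to have length at least $L$, which the paper arranges by making the wrapping lengths differ by a multiple of $m_il_i>L$, whereas your middle segment has length $\ell_k$.

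There are two ways to close the gap. Either modify the wrapping so that distinct sequences produce branch points at distance at least $L$ along $A_k$ (for instance wrap $N_k(1+\tau_k)$ times), after which the paper's bi-infinite quasigeodesic argument applies verbatim; or keep your construction and replace the angle comparison by the observation, whose ingredients you already have on the table, that the second tail is the image of the first under the loxodromic deck transformation $\omega_k^{\pm1}$ stabilizing $A_k$. Equality of the two endpoints would then force $\xi_\tau$ to be fixed by $\omega_k$, hence to be an ideal endpoint of $A_k$, so the shadowing geodesic ray and therefore the whole tail would stay in a bounded neighborhood of $A_k$; this contradicts the fact that the tail contains lifts of the connectors $b_j$ with $d(b_j,\lambda_k)\to\infty$. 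Either repair yields the theorem; as stated, the divergence claim is not proved.
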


\begin{proof}
The proof is inspired by Bishop's construction of nonconical limit points of geometrically infinite Kleinian groups in the 3-dimensional hyperbolic space $\mathbb{H}^{3}$; \cite[Theorem 1.1]{Bi}. Let $\pi: X\to M=X/\Gamma$ denote the covering projection. 
Pick a point $\tilde{x}\in X$ and set ${x}:= \pi(\tilde x)$.    If $\Gamma$ is geometrically infinite, by the generalized Bonahon theorem in Section \ref{sec:Bohanon}, 
there exists a sequence  of oriented closed geodesics $( \lambda_{i} )$ in $M$ which escapes every compact subset of $M$, i.e. 
$$
\lim_{i\to\infty} d({x}, \lambda_{i})=\infty. 
$$ 
Let $L$ be the constant as in Proposition \ref{piecewise geodesic path} when $\theta= \pi /2$. After passing to a subsequence if necessary, we can assume that $d({x}, \lambda_{1})\geq L $ and the minimal distance between any consecutive pair of geodesics $\lambda_i, \lambda_{i+1}$ is at least $L$.  For each $i$, let $l_{i}$ denote the length of the closed geodesic $\lambda_{i}$ 
and let $m_{i}$ be a positive integer such that $m_{i}l_{i}>L$.

We then pass to a subsequence in $(\lambda_i)$ as in Lemma   \ref{escape compact set} (retaining the notation $(\lambda_i)$ for the subsequence),  
so that there exists a sequence of geodesic arcs $\mu_i:= x_{i}^{+} x_{i+1}^{-}$ meeting $\lambda_i, \lambda_{i+1}$ orthogonally at its end-points, for which 
  $$
\lim_{i\to\infty}  d({x}, \mu_i)=\infty. 
  $$
Let $D_i$ denote the length of the shortest positively oriented arc of $\lambda_i$ connecting $x_i^-$ to $x_i^+$. We let $\mu_0$ denote the shortest geodesic in $M$ 
connecting $x$ to $x_{1}^{-}$. 

\begin{figure}[H]
\centering
\includegraphics[width=7.0in]{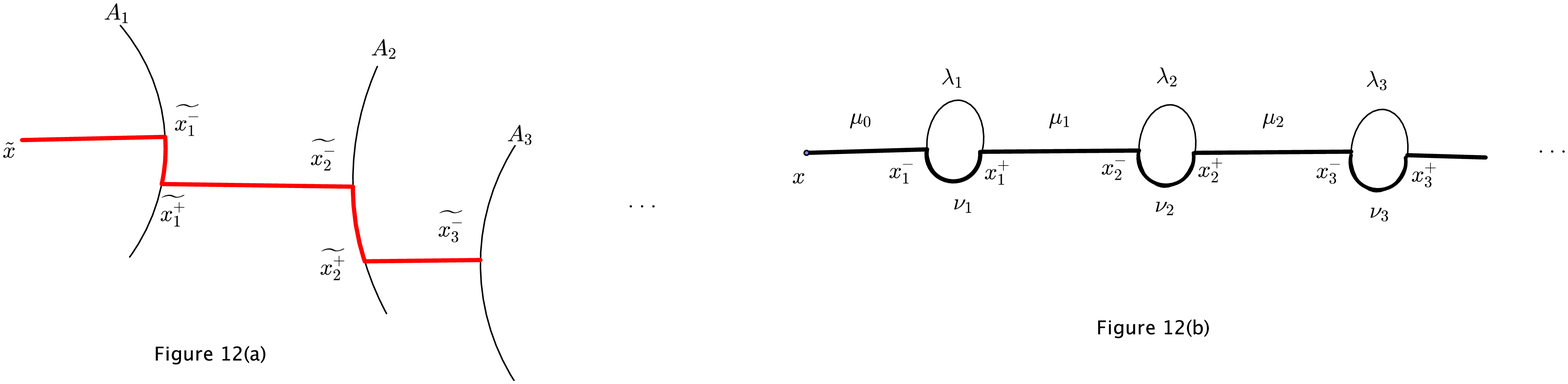} 
\caption{Here $A_i$ denotes a geodesic in $X$ covering the loop $\lambda_i$, $i\in \N$.}
\end{figure}

We next construct a family of piecewise geodesic paths $\gamma_\tau$ in $M$ starting at $x$ such that the geodesic pieces of 
$\gamma_\tau$ are the arcs $\mu_i$ above and arcs $\nu_i$ whose images are contained in $\lambda_i$ and which have the same orientation as $\lambda_i$: 
Each $\nu_i$ wraps around $\lambda_i$ a certain number of times and connects $x_i^-$ to $x_i^+$. More formally, 
 we define a map $\mathscr{P}: \mathbb{N}^{\infty}\rightarrow P(M)$ where $\mathbb{N}^{\infty}$ is the set of sequences of positive integers and $P(M)$ 
 is the space of paths in $M$ as follows:
$$\mathscr{P}: \tau=(t_{1}, t_{2}, \cdots, t_{i}, \cdots )\mapsto {\gamma}_{\tau}= \mu_0\ast \nu_{1}\ast \mu_{1}\ast \nu_{2}\ast \mu_{2}\ast \cdots \ast \nu_{i}\ast \mu_{i}\ast \cdots $$
where the image of the geodesic arc $\nu_{i}$  is contained in $\lambda_{i}$ and $\nu_i$ 
has length 
$$
l(\nu_{i})=t_{i}m_{i}l_{i}+ D_i.
$$
 Observe that for $i\ge 1$, the arc $\mu_{i}$ connects $\lambda_{i}$ and $\lambda_{i+1}$ and is orthogonal to both, with length $l(\mu_{i})\geq L$ and 
 $\nu_{i}$ starts at $x_{i}^{-}$ and ends at $x_{i}^{+}$ with length $l(\nu_{i})\geq L$.

For each ${\gamma}_{\tau}$, we have a canonical lift $\tilde\gamma_{\tau}$ in ${X}$, which is a path starting at $\tilde{x}$. We will use the notation 
$\tilde\mu_i, \tilde\nu_i$ for the lifts of the subarcs $\mu_i, \nu_i$ respectively, see Figure 12(a, b). By the construction, each $\gamma_{\tau}$ has the following properties:

\begin{enumerate}
			
			\item 
		Each geodesic piece of $\tilde\gamma_{\tau}$ has length  at least $L$. 
			\item
			Adjacent geodesic segments of $\tilde\gamma_{\tau}$ make the angle equal to $\pi /2$ at their common endpoint. 
		    
		    \item The path $\gamma_\tau: [0,\infty)\to M$ is a proper map. 
		
		\end{enumerate}
		
By Proposition \ref{piecewise geodesic path}, $\tilde\gamma_{\tau}$ is a $(2L, 4L+1)$-quasigeodesic. Hence, there exists a limit
$$
\lim_{t\to\infty} \tilde\gamma_\tau(t) = \tilde\gamma_{\tau}(\infty)\in \geo X,
$$ 
and the Hausdorff distance between $\tilde\gamma_{\tau}$ and $x\tilde\gamma_{\tau}(\infty)$ is bounded above by a uniform constant $C$, depending only on 
$L$ and $\kappa$. 

We claim that each $\tilde\gamma_{\tau}(\infty)$ is a nonconical limit point. Observe that $\tilde\gamma_{\tau}(\infty)$ is a limit of loxodromic fixed points, so 
$\tilde\gamma_{\tau}(\infty)\in \Lambda(\Gamma)$. Let $\gamma^{\ast}_{\tau}$ be the projection of $x\tilde\gamma_{\tau}(\infty)$ under $\pi$. Then the image of 
$\gamma^{\ast}_{\tau}$ is uniformly close to ${\gamma}_{\tau}$. Since ${\gamma}_{\tau}$ is a proper path in $M$, so is $\gamma^{\ast}_{\tau}$. 
Hence, $\tilde\gamma_{\tau}(\infty)$ is a nonconical limit point of $\Gamma$. 

We claim that the set of nonconical limit points $\tilde\gamma_{\tau}(\infty)$, $\tau\in {\mathbb N}^\infty$, 
has the  cardinality of the continuum. It suffices to prove that the map 
$$
\mathscr{P}_\infty: \tau \mapsto \tilde\gamma_{\tau}(\infty)$$
 is injective.  

Let $\tau=(t_{1}, t_{2}, \cdots, t_{i})$ and $\tau'=(t'_{1}, t'_{2}, \cdots, t'_{i}, \cdots)$ be two distinct sequences of positive integers. Let $m$ be the smallest positive integer such that $t_{m}\neq t'_{m}$. Then the paths $\tilde\gamma_\tau, \tilde\gamma_{\tau'}$ can be written as concatenations 
$$
\tilde\alpha_\tau \star \tilde \nu_m \ast  \tilde\beta_\tau, \quad \tilde\alpha_\tau \star \tilde\nu'_m \ast  \tilde\beta_{\tau'},
$$
where $\tilde\alpha_\tau$ is the common  initial subpath
$$
\tilde\mu_0 \ast \tilde\nu_{1}\ast \tilde\mu_{1}\ast \tilde\nu_{2}\ast \tilde\mu_{2}\ast \cdots \ast \tilde\nu_{m-1}\ast \tilde\mu_{m-1}. 
$$
The geodesic segments $\tilde\nu_m, \tilde\nu'_m$ have the form
$$
\tilde\nu_m= \tilde{x}^-_n \tilde{x}_m^+, 
$$
$$
\tilde\nu'_m= \tilde{x}^-_n \tilde{x'}_m^+. 
$$

Consider the bi-infinite piecewise geodesic path 
$$
\sigma:=\tilde{\beta}_\tau^{-1} \star \tilde{x}_n^+ \tilde{x'}_n^+ \star \tilde\beta_{\tau'}
$$
in $X$. 
Each geodesic piece of the path has length at least $L$ and adjacent geodesic  segments of the path are orthogonal to each other. By Proposition \ref{piecewise geodesic path}, 
$\sigma$ is a complete $(2L, 4L+1)$-quasigeodesic and, hence, it is backward/forward asymptotic to distinct points in $\geo X$. These points in $\geo X$ are respectively 
$\tilde\gamma_\tau(\infty)$ and $\tilde\gamma_{\tau'}(\infty)$. Hence, the map $\mathscr{P}_\infty$ is injective. We conclude that the  endpoints of the piecewise geodesic paths 
$\tilde\gamma_{\tau}$  yield a set of nonconical limit points of $\Gamma$ which has the cardinality of the continuum. 
\end{proof}

\begin{remark}
This proof  is a simplification of Bishop's argument in \cite{Bi}, since, unlike \cite{Bi}, we have orthogonality of the consecutive segments in each 
$\gamma_\tau$. 
\end{remark}

\noindent
{\bf Proof of Theorem \ref{theo 1.3}:} The implication $(1) \Rightarrow (2)$  (a generalization of Bonahon's theorem) is the main result of Section \ref{sec:Bohanon}. 
The implication $(2) \Rightarrow (3)$ is the content of Theorem \ref{Theo 5.1}. It remains to prove that $(3)\Rightarrow (1)$. If $\Gamma$ is geometrically finite, by Theorem 1.4 $\Lambda(\Gamma)$ consists of conical limit points and bounded parabolic fixed points. Since $\Gamma$ is discrete, it is at most countable; therefore, the set of fixed points of parabolic elements of $\Gamma$ is again at most countable. If $\Lambda(\Gamma)$ contains a subset of nonconical limit points of the cardinality of the continuum, we can find a point in the limit set which is neither a conical limit point nor a parabolic fixed point. It follows that $\Gamma$ is geometrically infinite. 
\qed

\vspace{5mm}

{\bf Proof of Corollary \ref{coro 1.6}:}
If $\Gamma$ is geometrically finite, by Theorem \ref{thm:gfcharacterization}, $\Lambda(\Gamma)$ consists of conical limit points and bounded parabolic fixed points. Now we prove that if $\Lambda(\Gamma)$ consists of conical limit points and parabolic fixed points, then $\Gamma$ is geometrically finite. Suppose that $\Gamma$ is geometrically infinite. By Theorem \ref{theo 1.3}, there is a set of nonconical limit points with the cardinality of the continuum. Since the set of parabolic fixed points is at most countable, there exists a limit point in $\Lambda(\Gamma)$ which is neither a conical limit point nor a parabolic fixed point. This contradicts to our assumption. Hence, $\Gamma$ is geometrically finite. 
\qed

\section{Limit sets of ends}\label{sec:ends}

We start by reviewing the notion of {\em ends} of locally path-connected, locally compact, Hausdorff topological spaces $Z$.  
We refer to \cite{KD} for a more detailed treatment. 

An {\em end} of $Z$ is the equivalence class of a sequence of connected nonempty open sets 
$$C_1\supset C_2 \supset C_3\supset \cdots $$
of $Z$, where each $C_i, i\in \N,$ is a component of 
$K_{i}^{c}=Z\setminus K_i$, and $\{ K_{i} \}_{i\in \N}$ is an increasing family of compact subsets exhausting $Z$ with 
$$K_i\subset K_{j}, \quad \hbox{~~whenever~~} i\leq j, $$
so that 
$$\bigcup_{i\in \N}K_i=Z. $$
Here two sequences $(C_i)$, $(C'_i)$ are equivalent if  each $C_i$ contains some $C'_j$ and vice-versa. The sets $C_i$ are called {\em neighborhoods} of $e$ in $Z$. A proper continuous map (a {\em ray}) 
$\rho: \R_+\to Z$ is said to be {\em asymptotic to the end $e$} if 
for every neighborhood $C_i$ of $e$, the subset $\rho^{-1}(C_i)\subset \R_+$ is unbounded.  

In this paper we will be considering ends of two classes of topological spaces: 

(1) $Z=Y=\Core(M)$, with $M=X/\Gamma$, where 
$\Gamma$ is a discrete isometry group of a Hadamard manifold $X$ of pinched negative curvature. 

(2) $Z=\noncusp_{\varepsilon}(Y)$ (with $Y$ as above), where $\varepsilon$ is less than the Margulis constant of $X$. 

\medskip 
An end $e$ of $Y=\Core(M)$ is called {\em cuspidal} or a {\em cusp} if it can be represented by a sequence $C_i$ consisting 
of projections of $\Hull(\Lambda)\cap B_i$, where $B_i$'s are nested horoballs in $X$. (As before, $\Lambda\subset \geo X$ denotes the limit set of $\Gamma$.) Equivalently, $e$ can be represented by 
a sequence $C_i$ of components of the $\varepsilon_i$-thin part $\thin_{\varepsilon_i}(Y)$ of $Y$, 
with $\lim_{i\to\infty} \varepsilon_i= 0$. 
When $\varepsilon$ is less than the {\em Margulis constant} of $X$, components of $\thin_{\varepsilon}(Y)$ which are 
neighborhoods  of $e$ are called {\em cuspidal neighborhoods} of $e$. In view of Theorem \ref{thm:gfcharacterization}, 
the group $\Gamma$ is geometrically infinite if and only if $Y$ has at least one non-cuspidal end. Equivalently, $\Ga$ is geometrically finite if and only if $Z$ is compact, equivalently, has no ends.

\medskip 
Consider a neighborhood $C$ of an end $e$ of $Z$, where $Z$ is either $Y=\Core(M)$ or is the noncuspidal part of $Y$. 
The preimage $\pi^{-1}(C)\subset \Hull(\La)$  under the 
quotient map $\pi: X\to M$ is a countable union of components $E_j$. Then $C$ is naturally 
isometric to the quotients $E_j/\Gamma_j$, where $\Gamma_j= \textup{Stab}_{\Gamma}(E_j)$ 
is the stabilizer of $E_j$ in $\Gamma$. A point 
$$
\la \in \bigcup_{j} \Lambda(\Gamma_j) \subset \La$$
 is an \emph{end-limit point} of $C$ if one (equivalently, every) geodesic ray $\beta$ in 
$\Hull(\La)$  asymptotic to $\la$ projects to a proper ray in $Y=\Core(M)$ asymptotic to $e$. 
We let $\Lambda(C)$ denote the set of end-limit points of $C$ and let $\Lambda({e})$, the {\em end-limit set of} $e$, 
denote the intersection
$$
\bigcap_{i} \Lambda(C_i) 
$$
taken over all neighborhoods $C_i$ of $e$. (It suffices to take the intersection over a sequence $(C_i)$ representing $e$.) 
Clearly, for every end $e$,  $\Lambda(e)$ is disjoint from the conical limit set of $\Gamma$. 

The main result of this section is

\begin{thm}\label{thm:ends-of-noncusp}
For every end $e$ of $Z=\noncusp_{\varepsilon}(Y)$, $\La(e)$ has the cardinality of continuum.   
\end{thm}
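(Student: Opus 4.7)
Since $Z = \noncusp_\varepsilon(Y)$ has an end $e$, the space $Z$ is noncompact, and by Theorem \ref{thm:gfcharacterization} the group $\Ga$ is geometrically infinite. The plan is to refine the construction in the proof of Theorem \ref{Theo 5.1} so that every nonconical limit point it produces lies in $\Lambda(e)$. Fix a nested sequence of connected neighborhoods $C_{1} \supset C_{2} \supset \cdots$ of $e$ representing it.

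The key technical step is to produce a sequence of closed geodesics $\lambda_n \subset Y$ with $\lambda_n \cap Z \subset C_n$ for all sufficiently large $n$. I would adapt the proof of the implication $(1)\Rightarrow(2)$ of Theorem \ref{theo 1.3} from Section \ref{sec:generalization}, basing it at a point $x_n \in C_n$ near the relative boundary of $C_n$ and using loops $\gamma_i$ at $x_n$ whose noncuspidal pieces $\alpha_i$ travel deep into $C_n$. Such loops exist because $\pi^{-1}(C_n) \cap \Hull(\Lambda)$ contains an unbounded component $E_j$ on which $\Ga_j = \stab_\Ga(E_j)$ has an unbounded orbit (this being equivalent to $C_n$ being unbounded in $Z$, which holds since $e$ is an end of $Z$). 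The three-case analysis of Section \ref{sec:generalization} then yields, for each $n$, a loxodromic $\omega_n$ of word length uniformly bounded in terms of the parabolic/elliptic generators supplied by Theorem \ref{proposition 3.16} and Corollary \ref{cor:BF}; by Corollary \ref{free homotopic} the resulting closed geodesic $w^{*}_n$ lies in a uniformly bounded neighborhood of its defining loop, hence in a slight enlargement of $C_n$. After passing to a subsequence one may take $\lambda_n := w^{*}_n$ with $\lambda_n \cap Z \subset C_n$.

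Having $(\lambda_n)$, a strengthening of Lemma \ref{escape compact set} (tracking where the orthogonal connecting arcs lie) gives a subsequence together with orthogonal connecting geodesic arcs $\mu_n$ between $\lambda_n$ and $\lambda_{n+1}$ that also escape along $e$. Then the construction of Theorem \ref{Theo 5.1} applies essentially verbatim: the piecewise geodesic paths $\gamma_\tau = \mu_0 \ast \nu_1 \ast \mu_1 \ast \nu_2 \ast \cdots$ with $\nu_i \subset \lambda_i$ lift to uniform $(2L, 4L+1)$-quasigeodesics $\tilde\gamma_\tau$ in $X$ by Proposition \ref{piecewise geodesic path}, with distinct asymptotic endpoints $\xi_\tau = \tilde\gamma_\tau(\infty) \in \geo X$ for distinct $\tau \in \mathbb{N}^\infty$. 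By the Morse lemma, the projection $\gamma^{*}_\tau$ in $M$ of the geodesic $\tilde x \xi_\tau$ is at uniformly bounded Hausdorff distance from $\gamma_\tau$; since every $\mu_i$ and $\nu_i$ eventually lies in each $C_k$, the ray $\gamma^{*}_\tau$ is proper in $Y$ and asymptotic to $e$, so $\xi_\tau$ is nonconical and its geodesic ray satisfies the ``asymptotic to $e$'' condition. Finally, once the lifts $\tilde\lambda_n$ lie in a common component $E_j$ of $\pi^{-1}(C_1)$, their loxodromic stabilizers belong to $\Ga_j$, and $\xi_\tau$ is a limit of fixed points of iterated products of these loxodromics, hence $\xi_\tau \in \Lambda(\Ga_j)$. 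Thus $\xi_\tau \in \Lambda(e)$, and injectivity of $\tau \mapsto \xi_\tau$ (established exactly as in Section \ref{sec:continuum}) yields continuum many end-limit points.

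The hard part is the localization in the second paragraph: one must check that each step of the three-case analysis of Section \ref{sec:generalization}, including the selection of parabolic/elliptic generators and their loxodromic products, can be carried out using loops that remain in $C_n$. This relies crucially on $e$ being an end of the noncuspidal part, so that $C_n$ contains arbitrarily long pieces of the noncuspidal convex core, and on the uniform word-length bounds supplied by Theorem \ref{proposition 3.16} and Corollary \ref{cor:BF}, which keep the loops (and hence the closed geodesics tracking them) from spilling outside a controlled neighborhood of $C_n$.
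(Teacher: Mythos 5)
Your overall strategy coincides with the paper's: produce a sequence of closed geodesics exiting the end $e$, run the construction of Theorem \ref{Theo 5.1} along them, and then verify that the resulting ideal points lie in $\Lambda(C_i)$ for every $i$ via the stabilizers $\Gamma_j=\stab_\Gamma(E_j)$ of nested lifts. The last two paragraphs of your proposal are essentially the paper's argument and are fine (modulo the small point that you must place the lifts $\tilde\lambda_n$, for $n$ large, in a component of $\pi^{-1}(C_i)$ for \emph{each} $i$, not only for $C_1$, in order to conclude $\xi_\tau\in\bigcap_i\Lambda(C_i)=\Lambda(e)$).

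There is, however, a genuine gap in your key technical step. You locate the closed geodesic $w^*_n$ near its defining loop $w_n\subset C_n$ by invoking Corollary \ref{free homotopic}, but that corollary requires a uniform lower bound $l(\omega_n)\ge\epsilon>0$ on the translation lengths of the loxodromic elements $\omega_n$, and its constant $\sinh^{-1}(2/\epsilon)$ blows up as $\epsilon\to 0$. In the proof of the implication $(1)\Rightarrow(2)$ of Theorem \ref{theo 1.3} this hypothesis could be arranged for free: if closed geodesics of length tending to $0$ exist, they already escape every compact set and the theorem follows, so one may assume a global lower bound on lengths of closed geodesics. That dichotomy is unavailable here, because a sequence of short closed geodesics escaping compact sets need not exit the \emph{particular} end $e$; and the loxodromics $\omega_n$ produced by the three-case analysis may well have $l(\omega_n)\to 0$, in which case you have no control on where $w^*_n$ sits relative to $C_n$. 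The paper closes this gap by splitting into the two cases $l(\omega_n)\ge\epsilon$ and $l(\omega_n)\to 0$ and, in the second case, replacing Corollary \ref{free homotopic} by Proposition \ref{free homotopic2}: the loop $w_n$ lies within a uniformly bounded distance $D$ of the Margulis region $Mar(\omega_n,r_k/2)$, where $r_k=\min_{x\in K_k}r(x)>0$ is the minimal displacement of non-elliptic elements over the compact set $K_k$; since $\omega_n$ is loxodromic, $Mar(\omega_n,r_k/2)$ is disjoint from $K_k$, hence (being connected and $D$-close to $w_n\subset C_k$) contained in $C_k$, and it contains the closed geodesic $w^*_n$. Without this (or an equivalent) argument your construction of the sequence $(\lambda_n)$ exiting $e$ does not go through.
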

\proof 
The end $e$ is represented by a nested sequence $(C_i)$ of  
components of $$K^{c}_{i}= \noncusp_{\varepsilon}(Y)\setminus K_i,$$ where  $K_i=B(x, iR)$ with 
$x\in \textup{noncusp}_{\varepsilon}(Y)$ and $R$ is the same constant as in proof of Theorem \ref{theo 1.3}. 

We first claim that there exists a sequence of closed geodesics $(\la_i)$  exiting $e$, i.e. $\la_i\subset C_i$, $i\in \N$.  
We follow Bonahon's proof in \cite{Bo}.  
By Lemma \ref{lemma 3.8}, every intersection 
$$(\textup{QHull}(\Gamma \tilde{x})/ \Gamma)\cap C_i$$
is unbounded, where $\tilde{x}$ is a lift of $x$ to $X$. 

By the argument in the proof of Theorem \ref{theo 1.3}, for every $C_n$, there exists a sequence of geodesic arcs 
$(\alpha_i)\subset C_n$ such that the Hausdorff distance hd$(\alpha_i, K_n)\rightarrow \infty$ as $i\rightarrow \infty$, and there exists a sequence of piecewise geodesic loops $w_{n}\subset C_n$ exiting $e$. These  geodesic loops $w_n$ represent loxodromic isometries $\omega_n\in \Isom(X)$. Up to a subsequence, there are two possible cases:
\begin{enumerate}
\item $l(\omega_{n})\geq \epsilon>0$ for some positive constant $\epsilon$ and all $n$. 

\item $l(\omega_{n})\rightarrow 0$ as $n\rightarrow \infty$. 

\end{enumerate}

For case (1), we use the same argument as  in the proof of  Theorem \ref{theo 1.3} to construct a sequence of closed geodesics $(\lambda_{i})$ exiting $e$. 

For case (2), let ${\mathrm T}\subset \Gamma$ be the set consisting of elliptic isometries and the identity. For  $\tilde{x}\in X$, we define 
$$
d_{\Gamma}(\tilde{x})=\min_{\gamma\in \Gamma \setminus T} d({\gamma}\tilde{x}, \tilde{x}).
$$
 For  $x\in M$, set 
 $$
 r(x)=d_{\Gamma}(\tilde{x})$$
  where $\tilde{x}\in X$ is a lift of $x$. (If $\Gamma$ is torsion free, then $r(x)$ is twice of the injectivity radius at $x$.) It is 
  clear that $r$ is a continuous function on $Y$, hence, it is bounded away from zero on compact subsets of $Y$.

Thus, $r_k:=\min_{x\in K_k} r(x)>0$. By passing to a subsequence, we assume that $l(\omega_{n})<r_1/2$. Then $Mar(\omega_{n}, r_1/2)$ is nonempty and disjoint from $K_1$ for all $n$. By Proposition \ref{free homotopic2}, 
$$d(w_{n}, Mar(\omega_{n}, r_1/2))\leq D,$$
where 
$$
D= \cosh^{-1}(\sqrt{2})\lceil \log_{2} 5K \rceil+\sinh^{-1}(4/r_1)$$
 and $K$ is the same constant as in the proof of Theorem \ref{theo 1.3}. Thus, $Mar(\omega_{n}, r_1/2)\subset C_1$ for all $n$. Inductively, we 
find a subsequence $(\omega_{i_{k}})$ such that $Mar(\omega_{i_{k}}, r_{k}/2)\subset C_{k}$. The closed geodesics $w^{\ast}_{i_{k}}\subset Mar(\omega_{i_{k}}, r_{k}/2)$ are also contained in $C_{k}$.  This is the required sequence of closed geodesics $(\lambda_{i})$ exiting the end $e$.

We then continue to argue as in the proof of Theorem \ref{Theo 5.1}. Namely,  we define a family of proper piecewise-geodesic paths $\gamma_\tau$ in $Z$. Since these rays are proper and 
the sequence $(\lambda_{i})$ exits the end $e$, the paths $\gamma_\tau$ are asymptotic to the end $e$. 
Hence, the geodesic rays $\gamma^*_\tau$ are also asymptotic to $e$. 

After choosing a lift of the starting point $x$ of all piecewise  geodesic paths $\gamma_{\tau}$ in $Z$, there is a canonical choice of the lift $\bar{\gamma}_{\tau}$ of $\gamma_{\tau}$. We claim that all the endpoints $\bar{\gamma}_{\tau}(\infty)$ belong to 
$\Lambda(e)$. It suffices to prove that $\bar{\gamma}_{\tau}(\infty)\in \Lambda(C_i)$ for all $i\geq 1$. 


Since the sequence $(C_i)$ is nested, we can find a nested sequence $(E_i)$ of lifts of $C_i$ to $X$. 
Recall that $\lambda_i\subset C_i$ for every $i$. Pick a complete geodesic $A_i\subset E_i$ which is a lift of $\lambda_i$. Each loop $\lambda_i$ represents 
an element (unique up to conjugation) $\omega_i\in \Ga$. We choose $\omega_i\in \Ga$ which preserves the geodesic $A_i$. Then $\omega_i$ 
preserves $E_i$ as well and, hence, the ideal fixed points of $\omega_i$ (the ideal end-points of the geodesic $A_i$) are in the limit set of 
$\Gamma_i= \textup{Stab}_{\Gamma}(E_{i})$. By the construction, 
$\tilde\gamma_\tau(\infty)$ is the limit of the sequence of geodesics $(A_j)$. Hence, $\tilde\gamma_\tau(\infty)$ is a limit point 
of $\Gamma_i$. Since  $\gamma^{\ast}_{\tau}$ is a proper geodesic ray asymptotic to $e$, it follows that  
$\tilde\gamma_\tau(\infty)\in \Lambda(C_i)$, as required. As in the proof of Theorem \ref{theo 1.3}, the rays 
$\gamma^*_\tau$ define continuum of distinct limit points of $\La(e)$.  Hence, $\Lambda(e)$ has the cardinality of the continuum. 
\qed

\medskip 
Since $\La(e)$ is the intersection of the limit sets $\La(C)$ taken over all neighborhoods $C\subset Z=\noncusp_{\varepsilon}(Y)$, we obtain 

\begin{cor}\label{cor:ends-of-noncusp} 
For every neighborhood $C\subset Z$ of an  end $e$ of $Z$, the limit set $\La(C)$ has the cardinality of continuum. 
\end{cor}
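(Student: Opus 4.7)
The plan is to deduce this corollary as an essentially immediate consequence of Theorem \ref{thm:ends-of-noncusp} together with the definition of $\Lambda(e)$. The key observation is that since $\Lambda(e)=\bigcap_{C} \Lambda(C)$, where the intersection is over all neighborhoods of $e$ in $Z$, we automatically have the containment $\Lambda(e)\subseteq \Lambda(C)$ for every single neighborhood $C$ of $e$. The cardinality of $\Lambda(C)$ is therefore bounded below by the cardinality of $\Lambda(e)$.

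The first step I would carry out is to verify the inclusion $\Lambda(e)\subseteq \Lambda(C)$ for any given neighborhood $C$ of $e$ in $Z=\noncusp_{\varepsilon}(Y)$. This is straightforward: if $\lambda\in \Lambda(e)$, then for every sequence $(C_i)$ representing $e$, $\lambda\in \Lambda(C_i)$ for all $i$. In particular, choosing any such representing sequence $(C_i)$ with $C_i\subseteq C$ for all sufficiently large $i$ (which is possible since $C$ is itself a neighborhood of $e$), a geodesic ray asymptotic to $\lambda$ in $\Hull(\Lambda)$ projects to a proper ray in $Y$ eventually contained in (the preimage of) $C$, so $\lambda\in \Lambda(C)$.

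The second step is to invoke Theorem \ref{thm:ends-of-noncusp}, which produces continuum many elements of $\Lambda(e)$. Combined with the inclusion above, $\Lambda(C)$ contains at least continuum many points. For the upper bound, $\Lambda(C)\subseteq \Lambda(\Gamma)\subseteq \geo X$, and the visual boundary $\geo X$ is homeomorphic to $S^{n-1}$, hence has cardinality exactly the continuum. Combining the two bounds gives the claim.

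I do not anticipate any real obstacle: the only thing that needs care is checking that a geodesic ray asymptotic to a point $\lambda\in \Lambda(e)$ indeed eventually enters every neighborhood $C$ of $e$, which follows from the very definition of being asymptotic to $e$ (properness of the projected ray together with the assumption that the projected ray exits every compact subset along $e$). Thus the entire argument fits in a few lines.
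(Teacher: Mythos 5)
Your proposal is correct and follows exactly the paper's argument: the paper derives this corollary in one line from Theorem \ref{thm:ends-of-noncusp} via the inclusion $\Lambda(e)\subseteq \Lambda(C)$, which holds because $\Lambda(e)$ is defined as the intersection of the sets $\Lambda(C)$ over all neighborhoods $C$ of $e$. Your additional remarks spelling out the inclusion and the trivial upper bound $\Lambda(C)\subseteq \geo X$ are fine but do not change the substance.
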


\noindent
{\bf Proof of Corollary \ref{cor:ends}:} If a complementary component $C$ of a compact subset of $Y$ 
is Hausdorff-close to a finite union of cuspidal neighborhoods of cusps in $Y$, then $\Lambda(C)$ is a finite union of orbits of the bounded parabolic fixed points corresponding to the cusps. Suppose, therefore, that $C$ is not Hausdorff-close to a finite union of cuspidal neighborhoods of cusps in $Y$. Thus, $C$ is also a neighborhood of an end 
$e$ of $Y$ which is not a cusp. In particular, $C\cap \noncusp_{\varepsilon}(Y)$ contains an unbounded component $C'$. 
Since $\La(C')\subset \La(C)$ and $\La(C')$ has the cardinality of continuum (Corollary \ref{cor:ends-of-noncusp}), so does $\La(C)$. \qed 

\medskip 
{\bf Proof of Corollary \ref{cusp end}:} 
If $e$ is a cuspidal end of $Y$, then $\Lambda(e)$ is the orbit of the bounded parabolic fixed point corresponding to $e$ under the group $\Gamma$. Hence, $\Lambda(e)$ is countable. Suppose, therefore, that $e$ is a non-cuspidal end. As we noted above, for every neighborhood $C$ of $e$ in $Y$, the intersection $C\cap Z= \noncusp_{\varepsilon}(Y)$ contains an unbounded component $C'$. Therefore, every nested sequence $(C_i)$  representing $e$ gives rise to a nested sequence $(C_i')$ in $Z$ representing 
an end $e'$ of $Z$. Since $\La(e')$ has the cardinality of continuum (Theorem \ref{thm:ends-of-noncusp}) 
and, by the construction, $\La(e')\subset \La(e)$, it follows 
that $\La(e)$ also has the cardinality of  continuum. \qed

\end{document}